\newcommand\supp{\operatorname{supp}}
\newcommand\card{\operatorname{card}}
\newtheorem{prop}{Proposition}
\newtheorem{thm}[prop]{Theorem}
\newtheorem{rem}{Remark}
\newtheorem{lem}[prop]{Lemma}
\newtheorem{res}{Result}
\begin{document}

\title[Scattering For Loglog Supercritical Radial Schr\"odinger Equation] {Scattering Above Energy Norm Of Solutions Of A Loglog
Energy-Supercritical Schr\"{o}dinger Equation With Radial Data}

\author{Tristan Roy}
\address{Nagoya University}
\email{tristanroy@math.nagoya-u.ac.jp}

\begin{abstract}
We prove scattering of  $\tilde{H}^{k}:= \dot{H}^{k} (\mathbb{R}^{n}) \cap \dot{H}^{1} (\mathbb{R}^{n})$- solutions of the loglog
energy-supercritical Schr\"odinger equation $ i \partial_{t} u + \triangle u = |u|^{\frac{4}{n-2}} u \log^{c} {( \log{(10+|u|^{2})} )} $, $0 < c
< c_{n}$, $n \in \{ 3,4 \}$, with radial data $u(0)=u_{0} \in \tilde{H}^{k}:= \dot{H}^{k} (\mathbb{R}^{n}) \cap \dot{H}^{1} (\mathbb{R}^{n}) $, $k > \frac{n}{2}$. This is achieved, roughly speaking, by extending Bourgain's argument \cite{bour} (see also Grillakis \cite{grill}) and Tao's
argument \cite{taorad} in high dimensions.
\end{abstract}

\maketitle

\section{Introduction}

We shall study the solutions of the following Schr\"odinger equation in dimension $n$, $n \in \{ 3,4 \}$:

\begin{equation}
\begin{array}{ll}
i \partial_{t} u + \triangle u & = |u|^{\frac{4}{n-2}} u g(|u|)
\end{array}
\label{Eqn:BarelySchrod}
\end{equation}
with $g(|u|):= \log^{c}{( \log{ (10 + |u|^{2}) } ) }$, $ 0 <c < c_{n}$ and \footnote{we shall prove global well-posedness
and scattering of radial solutions to (\ref{Eqn:BarelySchrod}). The computations show that these properties
hold for functions $g$ that do not grow faster than $ x \rightarrow \log^{c} \log (10 + |x|^{2})$ with $c < c_n$ but not for functions $g$ that
grow faster (i.e $c \geq c_n$). The values of $c_n$ are determined by technical computations but do not have a
particular physical meaning.}

\begin{equation}
\begin{array}{l}
c_{n}:= \left\{
\begin{array}{ll}
\frac{1}{5772}, \, n=3 \\
\frac{3}{8024}, \, n=4 \\
\end{array}
\right.
\end{array}
\end{equation}
This equation has many connections with the following power-type Schr\"odinger equation, $p>1$

\begin{equation}
\begin{array}{ll}
i \partial_{t} v + \triangle v & = |v|^{p-1} v
\end{array}
\label{Eqn:Schrodpowerp}
\end{equation}
(\ref{Eqn:Schrodpowerp}) has a natural scaling: if $v$ is a solution of (\ref{Eqn:Schrodpowerp}) with data $v(0):=v_{0}$ and if $\lambda \in
\mathbb{R}$ is a parameter then $v_{\lambda}(t,x) := \frac{1}{\lambda^{\frac{2}{p-1}}} v \left( \frac{t}{\lambda^{2}}, \frac{x}{\lambda}
\right)$ is also a solution of (\ref{Eqn:Schrodpowerp}) but with data $v_{\lambda}(0,x):= \frac{1}{\lambda^{\frac{2}{p-1}}} v_{0} \left(
\frac{x}{\lambda} \right)$. If $s_{p}:= \frac{n}{2}- \frac{2}{p-1}$ then the $\dot{H}^{s_{p}}$ norm of the initial data is invariant under the
scaling: this is why (\ref{Eqn:Schrodpowerp}) is said to be $\dot{H}^{s_{p}}$- critical. If $p=1 + \frac{4}{n-2}$ then (\ref{Eqn:Schrodpowerp})
is $\dot{H}^{1}$ (or energy) critical. The energy-critical Schr\"odinger equation
\begin{equation}
\begin{array}{ll}
i \partial_{t} u + \triangle u & = |u|^{\frac{4}{n-2}} u
\end{array}
\label{Eqn:EnergyCrit}
\end{equation}
has received a great deal of attention. Cazenave and Weissler \cite{cazweiss} proved the local well-posedness of (\ref{Eqn:EnergyCrit}): given
any $u(0)$ such that $\| u(0) \|_{\dot{H}^{1}} < \infty$ there exists, for some $t_{0}$ close to zero, a unique $ u \in \mathcal{C} ( [0,t_{0}],
\dot{H}^{1} ) \cap L_{t}^{ \frac{2(n+2)}{n-2}} L_{x}^{\frac{2(n+2)}{n-2}} ( [0,t_{0}] )$ satisfying (\ref{Eqn:EnergyCrit}) in the sense of
distributions

\begin{equation}
\begin{array}{ll}
u(t) & = e^{it \triangle} u(0) - i \int_{0}^{t} e^{i(t-t^{'}) \triangle} \left[  |u(t')|^{\frac{4}{n-2}} u(t') \right] \, dt^{'}
\end{array}
\label{Eqn:DistribSchrod}
\end{equation}
Bourgain \cite{bour} proved global existence and scattering of radial solutions in the class $\mathcal{C} \left( \mathbb{R}, \dot{H}^{1} \right)  \cap L_{t}^{ \frac{2(n+2)}{n-2}} L_{x}^{\frac{2(n+2)}{n-2}} (\mathbb{R})$ in dimension $n=3,4$. He also proved this fact that for smoother solutions. Another proof was given by Grillakis
\cite{grill} in dimension $n=3$. The radial assumption for $n=3$ was removed by Colliander-Keel-Staffilani-Takaoka-Tao
\cite{collkeelstafftaktao}. This result was extended to $n=4$ by Rickman-Visan \cite{rickmanvisan} and to $n \geq 5$ by Visan \cite{visan}. If
$p > 1+ \frac{4}{n-2}$ then $s_{p} > 1$ and we are in the energy supercritical regime. The global existence of $\tilde{H}^{k}$-solutions in this
regime is an open problem. Since for all $\epsilon > 0$ there exists $c_{\epsilon} > 0$ such that $ \left| |u|^{\frac{4}{n-2}} u \right|
\lesssim \left| |u|^{\frac{4}{n-2}} u g(|u|) \right| \leq c_{\epsilon} \max{(1, | |u|^{\frac{4}{n-2}+ \epsilon} u | ) }$ then the nonlinearity
of (\ref{Eqn:BarelySchrod}) is said to be barely supercritical.

In this paper we are interested in establishing global well-posedness and scattering of $\tilde{H}^{k} := \dot{H}^{k}(\mathbb{R}^{n}) \cap \dot{H}^{1}
(\mathbb{R}^{n})$ - solutions of (\ref{Eqn:BarelySchrod}) for $n \in \{ 3,4 \}$. First we prove a local-wellposed result. The local
well-posedness theory for (\ref{Eqn:BarelySchrod}) and for $\tilde{H}^{k}$-solutions can be formulated as follows

\begin{prop}{\textbf{``Local well-posedness ''}}
Let $n \in \{ 3,4 \}$ and $k > \frac{n}{2}$. Let $M$ be such that $\| u_{0} \|_{\tilde{H}^{k}} \leq M$. Then there exists $\delta:= \delta(M) > 0$ small such that if $T_{l}>0$ ($T_{l}$=time of local existence) satisfies

\begin{equation}
\begin{array}{ll}
\| e^{i t \triangle} u_{0}  \|_{L_{t}^{\frac{2(n+2)}{n-2}} L_{x}^{\frac{2(n+2)}{n-2}} ([0,T_{l}]) } & \leq \delta
\end{array}
\end{equation}
then there exists a unique

\begin{equation}
\begin{array}{l}
u \in \mathcal{C}([0,T_{l}], \tilde{H}^{k}) \cap L_{t}^{\frac{2(n+2)}{n-2}} L_{x}^{\frac{2(n+2)}{n-2}} ([0,T_{l}] )
\cap L_{t}^{\frac{2(n+2)}{n}} D^{-1} L_{x}^{\frac{2(n+2)}{n}} ([0,T_{l}]) \\
\cap L_{t}^{\frac{2(n+2)}{n}} D^{-k} L_{x}^{\frac{2(n+2)}{n}}
([0,T_{l}])
\end{array}
\label{Eqn:ClassSol}
\end{equation}
such that

\begin{equation}
\begin{array}{l}
u(t)=e^{i t \triangle} u_{0} - i \int_{0}^{t} e^{i(t- t^{'}) \triangle} \left( |u(t^{'})|^{\frac{4}{n-2}} u(t^{'}) g(|u(t^{'})|) \right) \,
dt^{'}
\end{array}
\label{Eqn:DistribSchrodg}
\end{equation}
is satisfied in the sense of distributions. Here $D^{-\alpha} L^{r}:=\dot{H}^{\alpha, r}$ endowed with the norm $\| f \|_{D^{-\alpha} L^{r}} := \| D^{\alpha} f \|_{L^{r}}$.
\label{Prop:LocalWell}
\end{prop}
This allows to define the notion of maximal time interval of existence $I_{max}$, that is the union of all the open intervals $I$ containing $0$ such
that (\ref{Eqn:DistribSchrodg}) holds in the class $ \mathcal{C}( I, \tilde{H}^{k}) \cap L_{t}^{\frac{2(n+2)}{n-2}} L_{x}^{\frac{2(n+2)}{n-2}}
(I) \cap L_{t}^{\frac{2(n+2)}{n}} D^{-1} L_{x}^{\frac{2(n+2)}{n}} (I) \cap L_{t}^{\frac{2(n+2)}{n}} D^{-k} L_{x}^{\frac{2(n+2)}{n}} (I) $. Next
we prove a criterion for global well-posedness:

\begin{prop}{\textbf{``Global well-posedness: criterion''}}
If $|I_{max}|< \infty $ then

\begin{equation}
\begin{array}{ll}
\| u \|_{L_{t}^{\frac{2(n+2)}{n-2}} L_{x}^{\frac{2(n+2)}{n-2}} (I_{max})} & = \infty
\end{array}
\end{equation}
\label{Prop:GlobWellPosedCrit}
\end{prop}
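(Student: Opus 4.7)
I would argue by contradiction. Suppose $|I_{max}|<\infty$ but
$$\|u\|_{L_{t}^{\frac{2(n+2)}{n-2}} L_{x}^{\frac{2(n+2)}{n-2}}(I_{max})}<\infty.$$
Write $T^{\ast}:=\sup I_{max}$ and fix $M$ with $\|u_{0}\|_{\tilde H^{k}}\le M$. The goal is to show that $u(t)$ converges in $\tilde H^{k}$ as $t\to T^{\ast}$, so that Proposition \ref{Prop:LocalWell} applied at time $T^{\ast}$ (or from a time just before it) extends the solution strictly beyond $T^{\ast}$, contradicting maximality.

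The plan is to run a standard Strichartz continuity argument. First I would partition $I_{max}$ into finitely many consecutive subintervals $I_{1},\dots,I_{N}$ on each of which
$$\|u\|_{L_{t}^{\frac{2(n+2)}{n-2}} L_{x}^{\frac{2(n+2)}{n-2}}(I_{j})}\le\eta,$$
for some small $\eta=\eta(M)>0$ to be chosen. On such an $I_{j}$, apply Strichartz estimates to the Duhamel formula (\ref{Eqn:DistribSchrodg}) in the $L_{t}^{\frac{2(n+2)}{n}}L_{x}^{\frac{2(n+2)}{n}}$ norm (plus its $D^{-1}$ and $D^{-k}$ variants) together with the fractional chain/Leibniz rule for $D^{s}\bigl(|u|^{\frac{4}{n-2}}u\, g(|u|)\bigr)$. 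Because $g(|u|)=\log^{c}(\log(10+|u|^{2}))$ grows slower than any power, one has a pointwise bound of the form
$$\bigl|D^{s}\bigl(|u|^{\frac{4}{n-2}}u\, g(|u|)\bigr)\bigr|\lesssim |u|^{\frac{4}{n-2}} g(|u|)\,|D^{s}u|$$
up to commutator errors, which by Hölder yields
$$\|u\|_{S(I_{j})}\lesssim \|u(\inf I_{j})\|_{\tilde H^{k}}+\eta^{\frac{4}{n-2}}\, g\bigl(\|u\|_{L_{t,x}^{\infty}}\bigr)\,\|u\|_{S(I_{j})},$$
where $S(I_{j})$ is the combined Strichartz norm appearing in Proposition \ref{Prop:LocalWell}. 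The logarithmic factor is harmless: choosing $\eta$ small relative to the slowly-growing loglog term allows one to absorb the nonlinear term into the left-hand side.

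Iterating across the $N$ subintervals gives a finite bound
$$\|u\|_{\mathcal C([0,T^{\ast}),\tilde H^{k})}+\|u\|_{S([0,T^{\ast}))}<\infty.$$
From the Duhamel formula, for $t<t'<T^{\ast}$, $\|u(t')-u(t)\|_{\tilde H^{k}}$ is controlled by Strichartz norms of the nonlinearity on $[t,t']$, which tend to $0$ as $t,t'\to T^{\ast}$. Hence $u(t)$ has a limit $u^{\ast}\in\tilde H^{k}$ at $T^{\ast}$, and since $e^{i(t-T^{\ast})\triangle}u^{\ast}$ has small $L_{t,x}^{\frac{2(n+2)}{n-2}}$ norm on a sufficiently short interval beyond $T^{\ast}$, Proposition \ref{Prop:LocalWell} produces an extension, contradicting the definition of $I_{max}$.

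The main technical obstacle is verifying the fractional chain rule at the $\dot H^{k}$ level $k>n/2$ for a nonlinearity involving the non-smooth factor $|u|^{\frac{4}{n-2}}$ combined with the loglog weight $g(|u|)$. One has to be careful that the logarithmic factor does not destroy the smallness provided by $\eta$; this is handled by using the elementary bound $g(|u|)\lesssim_{\epsilon} 1+|u|^{\epsilon}$ for arbitrarily small $\epsilon>0$ together with a Sobolev embedding $\tilde H^{k}\hookrightarrow L^{\infty}$ (available since $k>n/2$), which converts the $L_{t,x}^{\infty}$ norm of $u$ into a finite quantity controlled by the a priori $\tilde H^{k}$ bound being propagated. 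Everything else is a routine bootstrap on each short subinterval.
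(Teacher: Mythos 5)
Your strategy is the same as the paper's: assume the critical norm is finite, partition $I_{max}$ into finitely many subintervals of small critical norm, bootstrap the Strichartz norms across them to get $Q(I_{max},u)<\infty$, and then use Proposition \ref{Prop:LocalWell} near the endpoint to contradict maximality. (The paper reaches the contradiction by showing $\|e^{it\triangle}u_{0}\|_{L_{t}^{\frac{2(n+2)}{n-2}}L_{x}^{\frac{2(n+2)}{n-2}}(I_{max})}<\infty$ and invoking dominated convergence to make the free evolution small near $b_{max}$, rather than by extracting a limit $u^{\ast}$ of $u(t)$; these are interchangeable.)

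There is, however, one genuine gap in your execution: you fix a single smallness parameter $\eta=\eta(M)$ with $M$ a bound on $\|u_{0}\|_{\tilde H^{k}}$ only. The absorption step on $I_{j}$ requires $\eta^{\frac{4}{n-2}}\,g\bigl(\|u\|_{L_{t}^{\infty}\tilde H^{k}(I_{j})}\bigr)$ to be small, and the quantity $\|u\|_{L_{t}^{\infty}\tilde H^{k}(I_{j})}$ is exactly what the bootstrap is producing: it grows like $(2C)^{j}\|u_{0}\|_{\tilde H^{k}}$ across the iteration, so $g$ of it grows with $j$ and a fixed $\eta$ eventually fails to close the continuity argument on later intervals. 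Your remark that $g(|u|)\lesssim_{\epsilon}1+|u|^{\epsilon}$ plus Sobolev does not break this circularity; it only re-expresses the offending factor in terms of the norm being propagated. The paper resolves this by letting the threshold shrink with the iterate, taking $\|u\|_{L_{t}^{\frac{2(n+2)}{n-2}}L_{x}^{\frac{2(n+2)}{n-2}}(I_{j})}=\epsilon\, g^{-\frac{n-2}{4}}\bigl((2C)^{j}\|u_{0}\|_{\tilde H^{k}}\bigr)$, and then checking that the partition is still finite because $\sum_{j}g^{-\frac{(n-2)^{2}}{8(n+2)}}\bigl((2C)^{j}\|u_{0}\|_{\tilde H^{k}}\bigr)\gtrsim\sum_{j}\frac{1}{j\log(2C)+\log\|u_{0}\|_{\tilde H^{k}}}$ diverges. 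This divergence is precisely where the loglog (rather than merely subpolynomial) growth of $g$ is used, and it is the one non-routine ingredient your proposal is missing.
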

These propositions are proved in Section \ref{Sec:LocalWell}. With this in mind, global well-posedness follows from an \textit{a priori} bound
of the form

\begin{equation}
\begin{array}{l}
\| u \|_{L_{t}^{\frac{2(n+2)}{n-2}} L_{x}^{\frac{2(n+2)}{n-2}} ([-T,T]) } \leq f(T, \| u_{0} \|_{\tilde{H}^{k}})
\end{array}
\end{equation}
for arbitrarily large time $T>0$. In fact we shall prove that the bound does not depend on time $T$: this is the preliminary step to prove
scattering.

The main result of this paper is:

\begin{thm}
The solution of (\ref{Eqn:BarelySchrod}) with radial data $u(0):=u_{0} \in \tilde{H}^{k}$, $n \in \{ 3, 4\}$, $ k > \frac{n}{2}$ and $0 < c <c_{n}$ exists
for all time $T$. Moreover there exists a scattering state $u_{0,+} \in \tilde{H}^{k} $ such that

\begin{equation}
\begin{array}{ll}
\lim \limits_{t \rightarrow \infty} \| u(t) - e^{it \triangle} u_{0,+}  \|_{\tilde{H}^{k}} & = 0
\end{array}
\end{equation}
and there exists $C$ depending only on $\| u_{0} \|_{\tilde{H}^{k}}$ such that

\begin{equation}
\begin{array}{ll}
\| u \|_{L_{t}^{\frac{2(n+2)}{n-2}} L_{x}^{\frac{2(n+2)}{n-2}} (\mathbb{R}) } & \leq C ( \| u_{0} \|_{\tilde{H}^{k}} )
\end{array}
\end{equation}
\label{thm:main}
\end{thm}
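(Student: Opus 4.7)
By Proposition~\ref{Prop:GlobWellPosedCrit}, global well-posedness of the $\tilde H^k$ solution reduces to establishing an \emph{a priori} bound on $\|u\|_{L_t^{\frac{2(n+2)}{n-2}} L_x^{\frac{2(n+2)}{n-2}} ([-T,T])}$ depending only on $\|u_0\|_{\tilde H^k}$ and not on $T$. Once this bound is in hand, the scattering claim is standard: partition $[0,\infty)$ into intervals of small Strichartz norm, apply Duhamel and Strichartz to show $\{ e^{-it\Delta} u(t) \}_{t \geq 0}$ is Cauchy in $\tilde H^k$, and identify $u_{0,+}$ as its limit. Thus the task is to control the $\dot H^1$-critical spacetime norm using only $\| u_0 \|_{\tilde H^k}$.

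The key object is a modified energy
$$E(u(t)) := \tfrac{1}{2}\int_{\mathbb{R}^n} |\nabla u(t)|^2 \, dx + \int_{\mathbb{R}^n} F(|u(t)|) \, dx, \qquad F'(r) = r^{\frac{n+2}{n-2}} g(r),$$
associated with \eqref{Eqn:BarelySchrod}. Because the nonlinearity is not scale invariant under $\dot H^1$-scaling, $E$ is not exactly conserved; however $\tfrac{d}{dt} E(u(t))$ picks up only a factor of $r g'(r)/g(r) \sim 1/\log(10+r^2)$, so $E$ grows at most by a slowly-varying loglog correction over the solution's lifespan. On any bounded time interval this lets me treat $E$ as essentially constant up to this small loglog factor.

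The body of the argument is a Bourgain-type induction on energy, following \cite{bour} and \cite{grill} in dimension three and \cite{taorad} in dimension four. Concretely: (i) partition $[0,T]$ into subintervals $J_1,\dots,J_N$ on which $\|u\|_{L_t^{\frac{2(n+2)}{n-2}} L_x^{\frac{2(n+2)}{n-2}}(J_i)} \sim \eta$, the threshold of Proposition~\ref{Prop:LocalWell}; (ii) on each $J_i$, combine the Lin--Strauss Morawetz estimate (for $n=4$, the weighted radial variant used in \cite{taorad}) with the radial Sobolev embedding $|u(t,x)| \lesssim |x|^{-(n-2)/2} \|u(t)\|_{\dot H^1}$ to produce a spatial scale $R_i$ and a time $t_i \in J_i$ at which a definite fraction of the energy is concentrated in the ball $\{|x| \lesssim R_i\}$; (iii) sum these concentrations against the Morawetz integral to bound $N$ by a polynomial in $\sup_{t\in [0,T]} E(u(t))$.

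The decisive and most delicate step, and the one fixing the value of $c_n$, is closing the feedback loop between $N$ and the energy. Since each subinterval contributes only a loglog increment to $E$, one obtains $\sup_{t \in [0,T]} E(u(t)) \lesssim E(u_0)\,(\log\log(\cdots))^{c \, p_n}$ for an exponent $p_n$ determined by the counting step; substituting into the polynomial bound on $N$ yields a self-improving inequality that closes precisely when $c \, p_n$ is strictly smaller than the exponent appearing on the Morawetz side, which after tracking all constants produces the explicit $c_n$ in the statement. The main technical burden is this uniform tracking of constants through frequency-, spatial-, and time-localization parameters, together with the interaction between the smallness of $\eta$ and the bubble extraction; the final $\dot H^1$-level Strichartz bound is then promoted to $\tilde H^k$ by persistence of regularity, via the fractional chain rule applied to the H\"older-continuous nonlinearity $|u|^{\frac{4}{n-2}} u \, g(|u|)$.
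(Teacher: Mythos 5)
There is a genuine gap, and it sits exactly where you locate your ``decisive and most delicate step.'' You assert that the modified energy $E(u(t))=\tfrac12\int|\nabla u|^2+\int F(u,\bar u)$ is not exactly conserved because the nonlinearity is not $\dot H^1$-scale-invariant, and that each subinterval contributes a loglog increment to $E$, so that the constraint $c<c_n$ emerges from closing a feedback loop between the interval count $N$ and a slowly growing energy. This is false: scale invariance is irrelevant to energy conservation, and since $|u|^{\frac{4}{n-2}}u\,g(|u|)$ is exactly the gradient of $F(z,\bar z)=\int_0^{|z|}t^{\frac{n+2}{n-2}}g(t)\,dt$, the equation is Hamiltonian and $E(u(t))=E(u_0)$ exactly (the paper records this with ``a simple computation shows that the energy is conserved''). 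Consequently the mechanism you propose for fixing $c_n$ does not exist, and your claim that the counting step bounds $N$ by ``a polynomial in $\sup_t E(u(t))$'' cannot be right either -- if it were, the problem would close with no restriction on $c$ at all, exactly as in the energy-critical case.

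The actual source of the constraint is different. The Morawetz/mass-concentration/Bourgain-interval-counting argument (Proposition \ref{prop:BoundLong}) bounds the number of small-Strichartz-norm intervals not polynomially in $E$ but by an expression of the form $\left(C_1 g^{a_n}(M)\right)^{C_2 g^{b_n+}(M)}$, where $M$ is an a priori bound on $\|u\|_{L_t^\infty\tilde H^k}$ -- the $g(M)$ dependence enters because the tuning parameters $\eta_1,\eta_2$, the concentration radii, and the interval-concentration algorithm all degrade by powers of $g(M)$, and $M$ controls $\|u\|_{L^\infty_{t,x}}$ via Sobolev. The feedback loop is then between $M_0$ (a bootstrap bound on $Q([0,T'],u)$, i.e.\ on the $\tilde H^k$-level norms) and the interval count $I$: iterating Strichartz gives $Q([0,T'],u)\le (2C)^I\|u_0\|_{\tilde H^k}$, while the counting gives $\log I\lesssim g^{b_n+}(M_0)\log\left(C_1 g^{a_n}(M_0)\right)$, and the continuity argument closes for large $M_0$ precisely when $\log^{c b_n}(\log M_0)=o(\log\log M_0)$, i.e.\ when $c b_n<1$; this is where $c_n=1/b_n$ comes from. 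Your overall architecture (reduction to an a priori critical Strichartz bound, Morawetz plus concentration plus interval counting, persistence of regularity, Cauchy criterion for scattering) is the right skeleton, but as written the proof cannot be completed because the quantity you propose to induct on does not vary, and the quantity that actually drives the induction (the $\tilde H^k$ norm through $g(M)$) is absent from your scheme.
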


\begin{rem}
This implies global regularity \footnote{By global regularity we mean ``if the data is radial and Schwartz, then the solution is infinitely
differentiable for all time.'' It is well-known that if for all time we have a finite bound of the $L^{\infty}$ norm of the solution,
then we have global regularity.} since by the Sobolev embedding $ \| u \|_{L_{t}^{\infty} L_{x}^{\infty} (\mathbb{R})}
\lesssim \| u \|_{L_{t}^{\infty} \tilde{H}^{k} (\mathbb{R})}$ for $k > \frac{n}{2}$.
\end{rem}

We recall some estimates. The pointwise dispersive estimate is $\| e^{i t \triangle} f  \|_{L^{\infty}(\mathbb{R}^{n})} \lesssim
\frac{1}{|t|^{\frac{n}{2}}} \| f \|_{L^{1} (\mathbb{R}^{n})}$. Interpolating with $\| e^{it \triangle} f \|_{L^{2}(\mathbb{R}^{n})} = \| f
\|_{L^{2} (\mathbb{R}^{n})}$ we have the well-known generalized pointwise dispersive estimate:

\begin{equation}
\begin{array}{ll}
\| e^{it \triangle} f \|_{L^{p}(\mathbb{R}^{n})} & \lesssim \frac{1}{|t|^{n \left( \frac{1}{2} - \frac{1}{p} \right)}} \| f
\|_{L^{p^{'}}(\mathbb{R}^{n})}
\end{array}
\label{Eqn:DispIneq}
\end{equation}
Here $2 \leq p \leq \infty$ and $p^{'}$ is the conjugate of $p$. We recall some useful Sobolev inequalities:

\begin{equation}
\begin{array}{ll}
\| u \|_{L_{t}^{\frac{2(n+2)}{n-2}} L_{x}^{\frac{2(n+2)}{n-2}} (J) } & \lesssim \| D u \|_{L_{t}^{\frac{2(n+2)}{n-2}} L_{x}^{\frac{2
n(n+2)}{n^{2}+4}} (J) }
\end{array}
\label{Eqn:SobolevIneq1}
\end{equation}
and

\begin{equation}
\begin{array}{ll}
\| u \|_{L_{t}^{\infty} L_{x}^{\infty} (J) } & \lesssim \| u \|_{ L_{t}^{\infty} \tilde{H}^{k} (J)} \cdot
\end{array}
\label{Eqn:SobolevIneq2}
\end{equation}
If $u$ is a solution of $i \partial_{t} u + \triangle u = G$, $u(t=0):=u_{0}$ on $J$ with $0 \in J$ and with data $u_0 \in H^{k}$ then the Strichartz
estimates (see for example \cite{keeltao}) yield

\begin{equation}
\begin{array}{l}
\| u \|_{L_{t}^{\infty} \dot{H}^{j} (J) } + \| D^{j} u \|_{L_{t}^{\frac{2(n+2)}{n}} L_{x}^{\frac{2(n+2)}{n}} (J)} + \| D^{j} u
\|_{L_{t}^{\frac{2(n+2)}{n-2}} L_{x}^{\frac{2 n(n+2)}{n^{2}+4}} (J)} \\
\lesssim \| D^{j} G \|_{L_{t}^{\frac{2(n+2)}{n+4}} L_{x}^{\frac{2(n+2)}{n+4}} ( J ) } + \| u_{0} \|_{\dot{H}^{j}}
\end{array}
\label{Eqn:Strich}
\end{equation}
if $j \in \{1,k \}$; if $t_{0} \in J$ then we write

\begin{equation}
\begin{array}{ll}
u(t)= u_{l,t_{0}}(t) + u_{nl,t_{0}}(t)
\end{array}
\end{equation}
with $u_{l,t_{0}}$ denoting the linear part starting from $t_{0}$, i.e

\begin{equation}
\begin{array}{ll}
u_{l,t_{0}}(t) & : = e^{i(t-t_{0}) \triangle} u(t_{0})
\end{array}
\end{equation}
and $u_{nl,t_{0}}$ denoting the nonlinear part starting from $t_{0}$, i.e

\begin{equation}
\begin{array}{ll}
u_{nl,t_{0}}(t) & := - i \int_{t_{0}}^{t} e^{i(t-s) \triangle} G(s) \,ds \cdot
\end{array}
\end{equation}
If $u$ is a $\tilde{H}^{k}-$ solution of (\ref{Eqn:BarelySchrod}) on $J$ with $k > \frac{n}{2}$ \footnote{Let $I$ be an interval. In the sequel we say that $u$ is an $\tilde{H}^{k}-$ solution on $I$ if for all $(t_0,t) \in I^{2}$ such that $t_0 \leq t$ (\ref{Eqn:ClassSol}) and (\ref{Eqn:DistribSchrodg}) hold with $[0,T_l]$ (resp. the interval of integration) replaced with $I$ (resp. $[t_0,t]$).}  and if $t \in J$, then it has a finite energy \footnote{Hence
the norm $\tilde{H}^{k}$  controls the energy. In other words, it is above the energy norm. }

\begin{equation}
\begin{array}{ll}
E(u(t))  & := \frac{1}{2} \int_{\mathbb{R}^{n}} |\nabla u (t,x)|^{2} +  \int_{\mathbb{R}^{n}} F(u,\bar{u})(t,x) \, dx
\end{array}
\label{Eqn:EnergyBarely}
\end{equation}
with

\begin{equation}
\begin{array}{ll}
F(z,\bar{z}) & := \int_{0}^{|z|} t^{\frac{n+2}{n-2}} g(t) \, dt
\end{array}
\end{equation}
Indeed

\begin{equation}
\begin{array}{ll}
\left| \int_{\mathbb{R}^{n}} F(u,\bar{u})(t,x) \, dx \right| & \lesssim  \| u(t) \|^{\frac{2n}{n-2}}_{L^{\frac{2n}{n-2}}} g( \| u(t)
\|_{L^{\infty}} ) \\
&  \lesssim \| u(t) \|^{\frac{2n}{n-2}}_{\dot{H}^{1}} g(\| u(t) \|_{\tilde{H}^{k}}):
\end{array}
\end{equation}
 this follows from a simple integration by part

\begin{equation}
\begin{array}{ll}
F(z,\bar{z}) & \sim  |z|^{\frac{2n}{n-2}} g(|z|)
\end{array}
\label{Eqn:EquivF}
\end{equation}
combined with (\ref{Eqn:SobolevIneq2}). A simple computation shows that the energy is conserved, or, in other words, that
$E(u(t))=E(u_{0})=E$ \footnote{More precisely, the computation holds for smooth solution (i.e solutions in $\tilde{H}^{p}$ with exponents $p$ large enough).
Then $E(u(t))=E(u_{0})$ holds for an $\tilde{H}^{k}-$ solution by a standard approximation argument with smooth solutions.}. Let $\chi$ be a smooth, radial function supported on $|x| \leq 2$ such that $\chi(x)=1$ if $|x| \leq 1$. If $x_{0} \in \mathbb{R}^{n}$, $R>0$ and $u$ is an $\tilde{H}^{k}$ solution of (\ref{Eqn:BarelySchrod}) then we define the mass within the ball $B(x_{0},R)$

\begin{equation}
\begin{array}{ll}
Mass \left( B(x_{0},R),u(t)  \right) & :=  \left( \int_{B(x_{0},R)} |u (t,x) |^{2} \, dx \right)^{\frac{1}{2}}
\end{array}
\end{equation}
Recall (see \cite{taorad}) that \footnote{(\ref{Eqn:UpBdDerivM}) also holds if $u$ is a solution of the linear Schr\"odinger equation
with data in $\tilde{H}^{k}$. }
*
\begin{equation}
\begin{array}{ll}
Mass \left( B(x_{0},R), u(t)  \right) & \lesssim  R \,\sup_{t^{'} \in [0,t]} \| \nabla u(t^{'}) \|_{L^{2}}
\end{array}
\label{Eqn:MassControl}
\end{equation}
and that its derivative satisfies

\begin{equation}
\begin{array}{ll}
|\partial_{t} Mass(u(t),B(x_{0},R))| & \lesssim \frac{\sup_{t^{'} \in [0,t]} \| \nabla u(t^{'}) \|_{L^{2}}}{R}
\end{array}
\label{Eqn:UpBdDerivM}
\end{equation}

Now we set up some notation. We write $a \ll b$ if $a \leq \frac{1}{100} b$, $a \gg b$ is $a \geq 100b $ and $a \sim b $ if $ \frac{1}{100}b \leq a
\leq 100b$, $a \ll_{E} b$ if $a \leq \frac{1}{100 \max{(1,E)}^{100n}} b$ (Here $n$ is the dimension of the space), $a \gg_{E} b$ if $a \geq 100
\max{(1,E)}^{100n} b$, $a \lesssim_{E} b$ if $ a \leq  100 (\max({1,E}))^{100n} b $ and $a \sim_{E} b$ if $ \frac{1}{100}{\max{(1,E)}^{100n}} b
\leq a \leq 100 \max{(1,E)}^{100n} b $. We say that $\tilde{C}$ is the constant determined by $a \lesssim b$ (or $a \lesssim_{E} b$) if it is
the smallest constant $C$ (or $C=C(E)$) that satisfies $a \leq C b$. If $u$ is a function then $u_{h}$ is the function defined by $x \rightarrow
u_{h}(x):=u(x-h)$. If $x \in \mathbb{R} $ then $x+ = x + \epsilon$ for $ 0 < \epsilon \ll 1$. Let $j \in \mathbb{N}$. If $J$ is an interval then we define

\begin{equation}
\begin{array}{ll}
Q_{j}(J,u):=  \| u \|_{L_{t}^{\infty} \tilde{H}^{j}(J)} + \| D u \|_{L_{t}^{\frac{2(n+2)}{n}} L_{x}^{\frac{2(n+2)}{n}} (J)} + \| D^{j} u
\|_{L_{t}^{\frac{2(n+2)}{n}} L_{x}^{\frac{2(n+2)}{n}} (J) } + \| u \|_{L_{t}^{\frac{2(n+2)}{n-2}} L_{x}^{\frac{2(n+2)}{n-2}} (J)}
\end{array}
\end{equation}

\begin{rem}
If $j=k$ then  we write $Q(J,u)$ instead of $Q_{j}(J,u)$.
\end{rem}
If $X$ is a normed vector space endowed with the norm $\| . \|_{X}$ and $R >0$, then $\mathcal{B}(X,R):= \left\{ y \in X, \| y \|_{X} \leq R \right\} $. \\
\\
Now we explain how this paper is organized. In Section \ref{Sec:Thmmain} we prove the main result of this paper, i.e Theorem \ref{thm:main}. The
proof relies upon the following bound of $\| u \|_{L_{t}^{\frac{2(n+2)}{n-2}} L_{x}^{\frac{2(n+2)}{n-2}} }$ on an arbitrarily long time interval

\begin{prop}{\textbf{`` Bound of $L_{t}^{\frac{2(n+2)}{2(n-2)}} L_{x}^{\frac{2(n+2)}{2(n-2)}}$ norm ''}}
Let $u$ be a radial $\tilde{H}^{k} -$ solution of (\ref{Eqn:BarelySchrod}) on a compact interval $J$. There exist three constants $C_{1} \gg_{E} 1$, $C_{2}
\gg_{E} 1$, and $a_{n}>0$ such that if $\| u \|_{L_{t}^{\infty} \tilde{H}^{k}(J)} \leq M$ for some $M \gg 1$, then

\begin{equation}
\begin{array}{ll}
\| u \|^{\frac{2(n+2)}{n-2}}_{ L_{t}^{\frac{2(n+2)}{n-2}} L_{x}^{\frac{2(n+2)}{n-2}} (J)} & \leq  \left( C_{1} g^{a_{n}}(M) \right)^{C_{2}
g^{b_{n}+}(M)}
\end{array}
\label{Eqn:BoundLong}
\end{equation}
with $b_{n}$ such that

\begin{equation}
\begin{array}{l}
b_{n} := \left\{
\begin{array}{l}
5772 , \, \,   n=3 \\
\frac{8024}{3} , \, n=4
\end{array}
\right.
\end{array}
\end{equation}
\label{prop:BoundLong}
\end{prop}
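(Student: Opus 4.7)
The plan is to adapt the Bourgain \cite{bour}--Grillakis \cite{grill} induction-on-energy scheme, sharpened by Tao's radial argument \cite{taorad}, while tracking carefully how the loglog correction $g(M)$ propagates through every quantitative step. I would first subdivide $J$ into $N$ consecutive subintervals $I_{j}$ on each of which
\begin{equation*}
\| u \|_{L_{t}^{\frac{2(n+2)}{n-2}} L_{x}^{\frac{2(n+2)}{n-2}}(I_{j})} = \eta,
\end{equation*}
for a threshold of the form $\eta = c(E)\, g^{-\alpha_{n}}(M)$. The desired estimate (\ref{Eqn:BoundLong}) then reduces to showing $N \leq \left( C_{1} g^{a_{n}}(M) \right)^{C_{2} g^{b_{n}+}(M)}$. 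On each $I_{j}$, the Strichartz inequality (\ref{Eqn:Strich}) applied to $F(u) = |u|^{\frac{4}{n-2}} u g(|u|)$ together with the smallness of $\eta$ controls $Q(I_{j}, u)$ by powers of $M$ and $g(M)$; in particular $u$ stays stable in $\tilde{H}^{k}$ along $I_{j}$ and the energy $E$ is exactly conserved.

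The heart of the argument is then a two-sided constraint on the mass of $u$ near the origin, enabled by radial symmetry. On each $I_{j}$ a pigeonholing and extraction argument in frequency and space yields a time $t_{j}^{\ast} \in I_{j}$ and a scale $R_{j} > 0$ with
\begin{equation*}
Mass\left( B(0, R_{j}), u(t_{j}^{\ast}) \right) \gtrsim_{E} g^{-\alpha_{n}'}(M)\, \eta^{\beta_{n}} R_{j},
\end{equation*}
where the centering at the origin is forced by radiality in the spirit of \cite{taorad}. Simultaneously, the radial Lin--Strauss Morawetz estimate adapted to (\ref{Eqn:BarelySchrod}) gives a bound of the form
\begin{equation*}
\int_{J} \int_{\mathbb{R}^{n}} \frac{|u(t,x)|^{\frac{2n}{n-2}} g(|u(t,x)|)}{|x|}\, dx\, dt \lesssim_{E} g^{\gamma_{n}}(M),
\end{equation*}
uniformly in $|J|$, which forbids too much mass from sitting near the origin for too long. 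Combining this with the slow variation of mass in a fixed ball from (\ref{Eqn:UpBdDerivM}), the kinematic upper bound (\ref{Eqn:MassControl}), and a pigeonhole over dyadic values of $R_{j}$ and over time gaps $|t_{j}^{\ast} - t_{k}^{\ast}|$, one recovers the asserted bound on $N$.

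The main obstacle is the fine bookkeeping of $g(M)$: each transition between function spaces --- the embedding (\ref{Eqn:SobolevIneq2}) passing from $L^{\infty}$ to $\tilde{H}^{k}$, the pointwise estimate (\ref{Eqn:EquivF}) for $F(u,\bar{u})$ and hence for the potential energy, the Strichartz estimates on the loglog-modified nonlinearity, and the Morawetz step --- costs a definite power of $g(M)$. The definition of $c_{n}$ is precisely calibrated so that these accumulated loglog losses still leave the Morawetz/mass argument consistent; the exponents $a_{n}$ and $b_{n}$ in (\ref{Eqn:BoundLong}) record this worst-case accounting, and the explicit formulas for $b_{n}$ and $c_{n}$ reflect the sharpest such balance. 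Verifying this delicate numerology --- in particular checking that every exponent harvested along the chain of estimates stays below the critical threshold dictated by the Morawetz input --- is the most technical part of the proof and is what restricts the admissible range of $c$ to $(0, c_{n})$.
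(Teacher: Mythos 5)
Your overall strategy (subdivision into intervals of fixed $L_{t,x}^{\frac{2(n+2)}{n-2}}$ norm, mass concentration at the origin via radiality, a Morawetz input, and a counting argument) is the right family of ideas, but there are two linked gaps that would make the proof fail as written.

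First, the Morawetz estimate you invoke, $\int_{J}\int_{\mathbb{R}^{n}} \frac{|u|^{\frac{2n}{n-2}} g(|u|)}{|x|}\,dx\,dt \lesssim_{E} g^{\gamma_{n}}(M)$ \emph{uniformly in} $|J|$, is not available here. The data lies only in $\dot{H}^{k}\cap\dot{H}^{1}$, so neither the mass nor the $\dot{H}^{1/2}$ norm is controlled, and the global Lin--Strauss Morawetz cannot be closed. What the paper proves (Lemma \ref{lem:Morawest}) is the spatially truncated estimate
\begin{equation*}
\int_{I}\int_{|x|\leq A|I|^{\frac{1}{2}}} \frac{\tilde{F}(u,\bar{u})(t,x)}{|x|}\,dx\,dt \lesssim E\,A\,|I|^{\frac{1}{2}},
\end{equation*}
whose right-hand side \emph{grows} like $|I|^{\frac{1}{2}}$. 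With such a bound one cannot simply sum the per-interval lower bounds to control the number $N$ of intervals: the summation only yields $\sum_{l}|J_{l}|^{\frac{1}{2}} \lesssim_{E} g^{\cdots}(M)\,|J|^{\frac{1}{2}}$, i.e.\ that at least one interval has length comparable (up to powers of $g(M)$) to $|J|$.

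Second, and consequently, your final step ("a pigeonhole over dyadic values of $R_{j}$ and over time gaps") is not enough to conclude. The paper must run Bourgain's removal algorithm: iterating the "one interval is long" conclusion inside successive connected components produces a tower of $K \gtrsim \log L / \log(\eta^{-1})$ intervals $J_{l_{1}},\dots,J_{l_{K}}$ with geometrically decreasing lengths, all within distance $\eta^{-1}|J_{l_{k}}|$ of a single time $\bar{t}$; evaluating the mass concentration of all of them at $t=\bar{t}$ and using $\|u(\bar{t})\|^{\frac{2n}{n-2}}_{L^{\frac{2n}{n-2}}}\lesssim E$ on essentially disjoint annular regions bounds $K$ by a power of $g(M)$ (times $\log g(M)$), whence $\log L \lesssim g^{b_{n}}(M)\log(g(M))\cdot\log(\eta^{-1})$. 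This is exactly where the exponential shape $\left(C_{1}g^{a_{n}}(M)\right)^{C_{2}g^{b_{n}+}(M)}$ of (\ref{Eqn:BoundLong}) comes from; a direct Morawetz summation, were it available, would give only a polynomial bound in $g(M)$, which is not the statement being proved. You also omit the exceptional/unexceptional dichotomy (intervals where the linear evolutions from the endpoints of $J$ are large must be discarded before the concentration argument applies), though that is a more routine repair. The $g(M)$ bookkeeping you emphasize is indeed delicate, but it is downstream of supplying these two structural ingredients.
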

By combining this bound with the Strichartz estimates, we can prove, by induction, that in fact this norm and other norms  (such as $\| u
\|_{L_{t}^{\infty} \tilde{H}^{k}(J)}$, $ \| D u \|_{L_{t}^{\frac{2(n+2)}{n}} L_{x}^{\frac{2(n+2)}{n}} (J) } $ , etc.) can be bounded only by a
constant only depending on the norm of the initial data for $\frac{n}{2} < k  < \frac{2+n}{n-2}$. This already shows (by Proposition \ref{Prop:GlobWellPosedCrit}) global well-posedness of the $\tilde{H}^{k}$-solutions of (\ref{Eqn:BarelySchrod}) for this range of $k$s. In fact we show that that these bounds imply a linear asymptotic behaviour of the solutions, or, in other words, scattering. Proposition \ref{Prop:PersReg}
allows to prove global well-posedness and scattering of solutions of (\ref{Eqn:BarelySchrod}) for the full range, i.e $k > \frac{n}{2}$. The rest of the paper is devoted to prove Proposition \ref{prop:BoundLong}. First we prove a weighted Morawetz-type estimate: it shows, roughly speaking, that the $L_{t}^{\frac{2n}{n-2}} L_{x}^{\frac{2n}{n-2}}$ norm of the solution cannot
concentrate around the origin on long time intervals. Then we modify arguments from Bourgain \cite{bour}, Grillakis \cite{grill} and mostly Tao
\cite{taorad}. We divide $J$ into subintervals $(J_{l})_{ 1 \leq l \leq L}$ such that the $ L_{t}^{\frac{2(n+2)}{n-2}}
L_{x}^{\frac{2(n+2)}{n-2}} $ norm of $u$ is small but also substantial. We prove that, on most of these intervals, the mass on at least one ball
concentrates. By using the radial assumption, we prove that in fact the mass on a ball centered at the origin concentrates. This implies, by
using the Morawetz-type estimate that there exists a significant number  of intervals (in comparison with $L$) that concentrate around a point
$\bar{t}$ and such that the mass concentrates around the origin. But, by H\"older, this implies that $L$ is finite: if not it would violate the
fact that the $L_{t}^{\infty} L_{x}^{\frac{2n}{n-2}} $ norm of the solution is bounded by some power of the energy. The process involves several
tuning parameters. The fact that these parameters depend on the energy is not important; however, it is crucial to understand how they depend on
$g(M)$ since this will play a prominent role in the choice of $c_{n}$ for which we have global well-posedness and scattering of $ \tilde{H}^{k}
$-solutions of (\ref{Eqn:BarelySchrod}) ( with $g(|u|):= \log^{c} \left( \log{(10 + |u|^{2})}  \right)$ and $c< c_{n}$): see the proof of
Theorem \ref{thm:main}, Section \ref{Sec:Thmmain}.

\section{Local well-posedness and criterion for global well-posedness}
\label{Sec:LocalWell}

In this section we prove Proposition \ref{Prop:LocalWell} and Proposition \ref{Prop:GlobWellPosedCrit}.

\subsection{Proof of Proposition \ref{Prop:LocalWell}}

This is done by a modification of standard arguments to establish a local well-posedness theory for (\ref{Eqn:EnergyCrit}).

We define

\begin{equation}
\begin{array}{ll}
X & := \mathcal{C} ( [0,T_{l}], \tilde{H}^{k} ) \cap L_{t}^{\frac{2(n+2)}{n}}  D^{-1} L_{x}^{\frac{2(n+2)}{n}} ([0,T_{l}]) \cap
L_{t}^{\frac{2(n+2)}{n}}  D^{-k} L_{x}^{\frac{2(n+2)}{n}} ([0,T_{l}]) \cap L_{t}^{\frac{2(n+2)}{n-2}} L_{x}^{\frac{2(n+2)}{n-2}} ([0,T_{l}])
\end{array}
\end{equation}
and, for some $C>0$ to be chosen later,

\begin{equation}
\begin{array}{ll}
X_{1} & := \mathcal{B} \left( \mathcal{C} ( [0,T_{l}], \tilde{H}^{k} )
\cap L_{t}^{\frac{2(n+2)}{n}} D^{-1}  L_{x}^{\frac{2(n+2)}{n}} ([0,T_{l}])
\cap L_{t}^{\frac{2(n+2)}{n}} D^{-k} L_{x}^{\frac{2(n+2)}{n}}([0,T_l]) , 2 C M
\right)
\end{array}
\end{equation}
and

\begin{equation}
\begin{array}{ll}
X_{2} & := \mathcal{B} \left( L_{t}^{\frac{2(n+2)}{n-2}} L_{x}^{\frac{2(n+2)}{n-2}} ([0,T_{l}]), 2 \delta \right)
\end{array}
\end{equation}
$X_{1} \cap X_{2} $ is a closed space of the Banach space $X$: therefore it is also a Banach space.

\begin{equation}
\begin{array}{l}
\Psi : = X_{1} \cap X_{2} \rightarrow X_{1} \cap X_{2} \\
u  \rightarrow \Psi(u):t \rightarrow \Psi(u)(t):=e^{i t \triangle} u_0 - i \int_{0}^{t} e^{i(t-t^{'}) \triangle} \left(  |u|^{\frac{4}{n-2}} (t^{'}) u(t^{'}) g(|u(t')|) \right) \, d t^{'}
\end{array}
\end{equation}

\begin{itemize}

\item $\Psi$ maps $X_{1} \cap X_{2}$ to $X_{1} \cap X_{2}$

By the fractional Leibnitz rule (see Appendix with $F(x):=\log^{c} \log(10+x)$, $G(x,\bar{x}):=|x|^{\frac{4}{n-2}} x$ and $\beta:= \frac{4}{n-2}$ ) and
(\ref{Eqn:SobolevIneq2}) we have

\begin{equation}
\begin{array}{ll}
\left\| D^{j} (|u|^{\frac{4}{n-2}} u g(|u|) \right\|_{L_{t}^{\frac{2(n+2)}{n+4}}  L_{x}^{\frac{2(n+2)}{n+4}} ([0,T_{l}])  } & \lesssim \| D^{j}
u \|_{L_{t}^{\frac{2(n+2)}{n}} L_{x}^{\frac{2(n+2)}{n}} ([0,T_{l}])} \| u \|^{\frac{4}{n-2}}_{L_{t}^{\frac{2(n+2)}{n-2}}
L_{x}^{\frac{2(n+2)}{n-2}} ([0,T_{l}])  } \\
&  g ( \| u \|_{L_{t}^{\infty} \tilde{H}^{k} ([0,T_{l}])  } )
\end{array}
\label{Eqn:EstDj}
\end{equation}
if $j=1$. Now assume that $j=k$. By Proposition \ref{Prop:EstHighReg} we get

\begin{equation}
\begin{array}{ll}
\left\| D^{j} (|u|^{\frac{4}{n-2}} u g(|u|) \right\|_{L_{t}^{\frac{2(n+2)}{n+4}}  L_{x}^{\frac{2(n+2)}{n+4}} ([0,T_{l}])}
& \lesssim \delta^{\frac{4}{n-2}} \langle M \rangle^{\bar{C}} \cdot
\end{array}
\nonumber
\end{equation}
Therefore \footnote{In the sequel we allow $\bar{C}$ to change from one line to the
other one.} by the Strichartz estimates (\ref{Eqn:Strich}) and the Sobolev embedding (\ref{Eqn:SobolevIneq2}) we have

\begin{equation}
\begin{array}{ll}
\| u \|_{L_{t}^{\infty} \tilde{H}^{k} ([0,T_{l}])} + \| D  u \|_{L_{t}^{\frac{2(n+2)}{n}} L_{x}^{\frac{2(n+2)}{n}} ([0,T_{l}])} + \| D^{k} u
\|_{L_{t}^{\frac{2(n+2)}{n}} L_{x}^{\frac{2(n+2)}{n}} ([0,T_{l}])} & \lesssim M + \delta^{\frac{4}{n-2}} \langle M \rangle^{\bar{C}}
\end{array}
\label{Eqn:ExistLoc1}
\end{equation}
Moreover

\begin{equation}
\begin{array}{ll}
\| u \|_{L_{t}^{\frac{2(n+2)}{n-2}} L_{x}^{\frac{2(n+2)}{n-2}} ([0,T_{l}])} -  \| e^{ i t \triangle} u_{0} \|_{L_{t}^{\frac{2(n+2)}{n-2}}
L_{x}^{\frac{2(n+2)}{n-2}} ([0,T_{l}])} & \lesssim  \left\| D (|u|^{\frac{4}{n-2}} u g(|u|)  ) \right\|_{L_{t}^{\frac{2(n+2)}{n+4}}
L_{x}^{\frac{2(n+2)}{n+4}} ([0,T_{l}])} \\
& \lesssim   \delta^{\frac{4}{n-2}} M g(M)
\end{array}
\end{equation}
so that

\begin{equation}
\begin{array}{ll}
\| u \|_{L_{t}^{\frac{2(n+2)}{n-2}} L_{x}^{\frac{2(n+2)}{n-2}} ([0,T_{l}])} -  \delta & \lesssim \delta^{\frac{4}{n-2}} M g(M)
\end{array}
\label{Eqn:ExistLoc2}
\end{equation}
Therefore if let let $C$ be equal to the maximum of the constants determined by (\ref{Eqn:ExistLoc1}) and (\ref{Eqn:ExistLoc2}), then we see
that $\Psi(X_{1} \cap X_{2}) \subset X_{1} \cap X_{2}$, provided that $\delta= \delta(M)> 0$ is small enough.

\item $\Psi$ is a contraction. Indeed, by the fundamental theorem of calculus and Proposition
\ref{Prop:EstHighReg}

\begin{equation}
\begin{array}{l}
\| \Psi(u) - \Psi(v) \|_{X}  \\
\lesssim
\sum \limits_{j=1} \left\| D^{j} (u-v) \right\|_{L_{t}^{\frac{2(n+2)}{n-2}} L_{x}^{\frac{2(n+2)}{n-2}} ([0,T_l])}
\left(
g \left( \| u \|_{L_{t}^{\infty} \tilde{H}^{k} ([0,T_l]}  \right) +
g \left( \| v \|_{L_{t}^{\infty} \tilde{H}^{k}([0,T_l]) }  \right)
\right) \\
\left(
\| u \|^{\frac{4}{n-2}}_{L_{t}^{\frac{2(n+2)}{n-2}} L_{x}^{\frac{2(n+2)}{n-2}} ([0,T_{l}])}
+ \| v \|^{\frac{4}{n-2}}_{L_{t}^{\frac{2(n+2)}{n-2}} L_{x}^{\frac{2(n+2)}{n-2}} ([0,T_{l}])}
\right) \\
\\
+ \left(
\begin{array}{l}
\sum  \limits_{\substack{j \in \{ 1,k \} \\ \delta_1 + \delta_2 = \frac{4}{n-2}}}
\left\| D^{j} \left( w_{\tau}^{\delta_1} \overline{w_{\tau}}^{\delta_2} g(|w_{\tau}|) \right) \right\|_{L_{t}^{\frac{n+2}{3}} L_{x}^{\frac{n+2}{3}} ([0,T_l])} \\
+ \sum \limits_{\substack{j \in \{1,k \} \\ \delta_1 + \delta_2 = \frac{4}{n-2} \\ \delta_3 + \delta_4 = 2 }}
\left\| D^{j} \left( w^{\delta_1} \overline{w_{\tau}}^{\delta_2} w_{\tau}^{\delta_3} \overline{w_{\tau}}^{\delta_4} \tilde{g}^{'} \left( |w_{\tau}|^{2} \right) \right) \right\|_{L_{t}^{\frac{n+2}{3}} L_{x}^{\frac{n+2}{3}} ([0,T_l])}
\end{array}
\right)
\| u -v \|_{L_{t}^{\frac{2(n+2)}{n-2}} L_{x}^{\frac{2(n+2)}{n-2}} ([0,T_l])} \\
\lesssim \delta \langle M \rangle^{\bar{C}} \| u -v \|_{X}
\end{array}
\end{equation}

and if $\delta= \delta(M) >0$ is small enough then $\Psi$ is a contraction.

\end{itemize}

\subsection{Proof of Proposition \ref{Prop:GlobWellPosedCrit}}
Again, this is done by a modification of standard arguments used to prove a criterion of global well-posedness of (\ref{Eqn:Schrodpowerp}) (See
\cite{triroybar} for similar arguments). Let $ \frac{n}{2} <  k' < \min \left( k, \frac{n+2}{n-2} \right)$.  Assume that $ \| u \|_{L_{t}^{\frac{2(n+2)}{n-2}} L_{x}^{\frac{2(n+2)}{n-2}} (I_{max})} < \infty$. Then

\begin{itemize}

\item First step: $Q_{k'}(I_{max},u) < \infty$. Indeed, let $0< \epsilon \ll 1$. Let $C$ be the constant determined by $\lesssim$ in (\ref{Eqn:Strich}).
We may assume without loss of generality that $C \gg \max \left( \| u_0 \|^{100}_{\tilde{H}^{k}}, \frac{1}{\| u_0 \|^{100}_{\tilde{H}^{k}}} \right)$.
We divide $I_{max} \cap [0,\infty)$ into subintervals $(I_{j})_{1 \leq j \leq J}$ such that $0 \in I_{1}$,

\begin{equation}
\begin{array}{ll}
\| u \|_{L_{t}^{\frac{2(n+2)}{n-2}} L_{x}^{\frac{2(n+2)}{n-2}} (I_{j})} & = \frac{\epsilon}{g^{\frac{n-2}{4}}
\left( (2C)^{j} \| u_0 \|_{\tilde{H}^{k}} \right) }
\end{array}
\end{equation}
if $1 \leq j <J$ and

\begin{equation}
\begin{array}{ll}
\| u \|_{L_{t}^{\frac{2(n+2)}{n-2}} L_{x}^{\frac{2(n+2)}{n-2}} (I_{J})} & \leq \frac{\epsilon}{g^{\frac{n-2}{4}}
\left( (2C)^{J} \| u_0 \|_{\tilde{H}^{k}} \right)}
\end{array}
\end{equation}
Notice that such a partition always exists since, for $J$ large enough,

\begin{equation}
\begin{array}{ll}
\sum \limits_{j=1}^{J-1} \frac{\epsilon^{\frac{2(n+2)}{n-2}}}{g^{\frac{n+2}{2}} \left( (2C)^{j} \| u_{0} \|_{\tilde{H}^{k}} \right) } & \gtrsim
\sum \limits_{j=1}^{J-1} \frac{1}{\log{( (2C)^{j} \| u_{0} \|_{\tilde{H}^{k}}) } } \\
& = \sum \limits_{j=1}^{J-1} \frac{1}{j \log{(2C)} + \log {(\| u_{0} \|_{\tilde{H}^{k}})} } \\
& \geq \| u \|^{\frac{2(n+2)}{n-2}}_{L_{t}^{\frac{2(n+2)}{n-2}} L_{x}^{\frac{2(n+2)}{n-2}} (I_{max}) }
\end{array}
\end{equation}
By the fractional Leibnitz rule (see Appendix $A$) and (\ref{Eqn:Strich}) we have for some positive constant $C'$

\begin{equation}
\begin{array}{ll}
Q_{k'}(I_{1},u) & \leq C \| u_{0} \|_{\tilde{H}^{k'}} + C  \| D ( |u|^{\frac{4}{n-2}} u g(|u|) ) \|_{ L_{t}^{\frac{2(n+2)}{n+4}}
L_{x}^{\frac{2(n+2)}{n+4}} (I_{1}) } + C \| D^{k'} ( |u|^{\frac{4}{n-2}} u g(|u|) ) \|_{ L_{t}^{\frac{2(n+2)}{n+4}}
L_{x}^{\frac{2(n+2)}{n+4}} (I_{1}) }  \\
& \leq C  \| u_{0} \|_{\tilde{H}^{k'}} + C^{'} C (  \| D u \|_{L_{t}^{\frac{2(n+2)}{n}} L_{x}^{\frac{2(n+2)}{n}}(I_{1})} + \| D^{k'} u
\|_{L_{t}^{\frac{2(n+2)}{n}} L_{x}^{\frac{2(n+2)}{n}}(I_{1})} ) \| u \|^{\frac{4}{n-2}}_{L_{t}^{\frac{2(n+2)}{n-2}} L_{x}^{\frac{2(n+2)}{n-2}}
(I_{1})} \\
& g(\| u \|_{L_{t}^{\infty} \tilde{H}^{k'}(I_{1})}) \\
& \leq C \| u_{0} \|_{\tilde{H}^{k'}} + 2 C^{'} C Q_{k'}(I_{1},u) \| u \|^{\frac{4}{n-2}}_{L_{t}^{\frac{2(n+2)}{n-2}} L_{x}^{\frac{2(n+2)}{n-2}} (I_{1}) }
g(Q_{k'}(I_{1},u))
\end{array}
\label{Eqn:BdQkpr}
\end{equation}
and by a continuity argument\footnote{Let $K \subset I_1$. Then (\ref{Eqn:BdQkpr}) also holds if
$I_1$ is replaced with $K$.}, $ Q_{k'}(I_{1},u) \leq 2 C \| u_{0} \|_{\tilde{H}^{k'}}$ . By iteration $Q_{k'}(I_{j},u) \leq (2C)^{j} \| u_{0}
\|_{\tilde{H}^{k'}}$. Therefore $Q_{k'}(I_{max},u) < \infty$, proceeding similarly on $I_{max} \cap (-\infty,0]$.

\item Second step. We write $I_{max}=(a_{max},b_{max})$. Choose $\bar{t} < b_{max}$ close enough to $b_{max}$ so that
$\| D u \|_{L_{t}^{\frac{2(n+2)}{n}} L_{x}^{\frac{2(n+2)}{n}}([\bar{t}, b_{max}))}  \ll \delta$ and
$ \| u \|_{L_{t}^{\frac{2(n+2)}{n-2}} L_{x}^{\frac{2(n+2)}{n-2}}([\bar{t},b_{max}])} \ll \delta$, with
$\delta$ defined in Proposition \ref{Prop:LocalWell}. We have

\begin{equation}
\begin{array}{ll}
\| e^{i (t -\bar{t}) \triangle} u(\bar{t}) \|_{L_{t}^{\frac{2(n+2)}{n-2}} L_{x}^{\frac{2(n+2)}{n-2}}([\bar{t},b_{max}))} & \leq
\| u \|_{L_{t}^{\frac{2(n+2)}{n-2}} L_{x}^{\frac{2(n+2)}{n-2}}([\bar{t},b_{max}))} + \\
& C' C \| D  u \|_{L_{t}^{\frac{2(n+2)}{n}} L_{x}^{\frac{2(n+2)}{n}}([\bar{t},b_{max}))}
\| u \|^{\frac{4}{n-2}}_{L_{t}^{\frac{2(n+2)}{n-2}} L_{x}^{\frac{2(n+2)}{n-2}}([\bar{t},b_{max}))} \\
&  g( \| u \|_{L_{t}^{\infty} \tilde{H}^{k'}(I_{max})}) \\
& \\
& \leq \frac{3 \delta}{4} \cdot
\end{array}
\end{equation}
Also observe that $\| e^{i (t -\bar{t}) \triangle} u(\bar{t}) \|_{L_{t}^{\frac{2(n+2)}{n-2}} L_{x}^{\frac{2(n+2)}{n-2}}([\bar{t},b_{max}))}
\lesssim \| u(\bar{t}) \|_{\dot{H}^{1}} < \infty$. Hence by the monotone convergence theorem, there exists $\epsilon > 0$ such that \\
$ \| e^{i (t - \bar{t}) \triangle} u(\bar{t}) \|_{L_{t}^{\frac{2(n+2)}{n-2}} L_{x}^{\frac{2(n+2)}{n-2}} [ \tilde{t}, b_{max} + \epsilon]} \leq \delta $. Hence contradiction with Proposition \ref{Prop:LocalWell}.
\end{itemize}

\section{Proof of Theorem \ref{thm:main}}
\label{Sec:Thmmain}

The proof makes use of Proposition \ref{prop:BoundLong} and is made of two steps:

\begin{itemize}

\item finite bound of  $ \|  u \|_{L_{t}^{\infty} \tilde{H}^{k}(\mathbb{R}) }$, $\| u \|_{L_{t}^{\frac{2(n+2)}{n-2}}
L_{x}^{\frac{2(n+2)}{n-2}}(\mathbb{R})}$, $ \| D u \|_{L_{t}^{\frac{2(n+2)}{n}} L_{x}^{\frac{2(n+2)}{n}} (\mathbb{R}) }$ and $ \| D^{k} u
\|_{L_{t}^{\frac{2(n+2)}{n}} L_{x}^{\frac{2(n+2)}{n}} (\mathbb{R})  }$ for $ \frac{n}{2} < k < \frac{n+2}{n-2}$. By time reversal symmetry \footnote{i.e if $ t \rightarrow u(t,x)$ is a solution of (\ref{Eqn:BarelySchrod}) then $t \rightarrow \bar{u}(-t,x)$ is also a solution of (\ref{Eqn:BarelySchrod}).} and by monotone
convergence it is enough to find, for all $T \geq 0$, a finite bound of all these norms restricted to $[0,T]$ and the bound should not depend on
$T$. We define

\begin{equation}
\begin{array}{ll}
\mathcal{F} & : = \left\{ T \in [0, \infty): \sup_{t \in [0,T]} Q([0,t],u)  \leq M_{0}  \right \}
\end{array}
\end{equation}
We claim that $\mathcal{F}= [0,\infty)$ for $M_{0}$, a large constant (to be chosen later)  depending only on $ \| u_{0} \|_{\tilde{H}^{k}}$.
Indeed

\begin{itemize}

\item $0 \in \mathcal{F}$.

\item $\mathcal{F}$ is closed by continuity

\item $\mathcal{F}$ is open. Indeed let $T \in \mathcal{F}$. Then, by continuity there exists $\delta > 0$ such that for $T^{'} \in [0, T + \delta]$
we have  $ Q ( [0,T^{'}])  \leq 2 M_{0} $. In view of (\ref{Eqn:BoundLong}), this implies, in particular, that

\begin{equation}
\begin{array}{ll}
\| u \|^{\frac{2(n+2)}{n-2}}_{L_{t}^{\frac{2(n+2)}{n-2}} L_{x}^{\frac{2(n+2)}{n-2}} ([0,T^{'}])} & \leq C_{1}  \left( g^{a_{n}} ( 2 M_{0} )
\right)^{C_{2} g^{b_{n}+}(2M_{0})}
\end{array}
\label{Eqn:ControlApCrit}
\end{equation}
Let $J:=[0,a]$ be an interval. We get from (\ref{Eqn:Strich}), Proposition \ref{Prop:FracLeibn}, and the Sobolev inequality $\| u \|_{L_{t}^{\infty} L_{x}^{\infty} (J) } \lesssim \| u \|_{L_{t}^{\infty} \tilde{H}^{k} (J)}  $

\begin{equation}
\begin{array}{l}
Q(J,u) \lesssim \| u_{0} \|_{\tilde{H}^{k}} + \left(  \| D u \|_{L_{t}^{\frac{2(n+2)}{n}} L_{x}^{\frac{2(n+2)}{n}} (J) } + \| D^{k} u
\|_{L_{t}^{\frac{2(n+2)}{n}} L_{x}^{\frac{2(n+2)}{n}} (J) } \right) \| u \|
^{\frac{4}{n-2}}_{L_{t}^{\frac{2(n+2)}{n-2}} L_{x}^{\frac{2(n+2)}{n-2}} (J) } \\
 g \left( \| u \|_{L_{t}^{\infty} \tilde{H}^{k} (J)} \right) \\
\lesssim \| u_{0} \|_{\tilde{H}^{k}} + Q(J,u) \| u \| ^{\frac{4}{n-2}}_{L_{t}^{\frac{2(n+2)}{n-2}} L_{x}^{\frac{2(n+2)}{n-2}} (J)} g
\left( Q (J,u) \right)
\end{array}
\label{Eqn:ModelStrichartz}
\end{equation}
Let $C$ be the constant determined by $\lesssim$ in (\ref{Eqn:ModelStrichartz}). We may assume without loss of generality that $C \gg \max{
\left( \| u_{0} \|^{100}_{\tilde{H}^{k}}, \frac{1} {\| u_{0} \|^{100}_{\tilde{H}^{k}}} \right)} $. Let $ 0 <\epsilon \ll 1$. Notice that if $J$
satisfies $ \| u \|_{L_{t}^{\frac{2(n+2)}{n-2}} L_{x}^{\frac{2(n+2)}{n-2}} (J)} = \frac{\epsilon}{g^{\frac{n-2}{4}} ( 2C \| u_{0}
\|_{\tilde{H}^{k}} ) } $ then a simple continuity argument shows that

\begin{equation}
\begin{array}{ll}
Q(J,u)   & \leq 2 C \| u_{0} \|_{\tilde{H}^{k}}
\end{array}
\label{Eqn:BoundQFstIter}
\end{equation}
We divide $[0,T^{'}]$ into subintervals $(J_{i})_{1 \leq i \leq I}$ such that $ \| u \|_{L_{t}^{\frac{2(n+2)}{n-2}} L_{x}^{\frac{2(n+2)}{n-2}}
(J_{i})} = \frac{\epsilon}{g^{\frac{n-2}{4}} ( (2C)^{i} \| u_{0} \|_{\tilde{H}^{k}} ) } $, $ 1 \leq i < I$ and  $ \| u
\|_{L_{t}^{\frac{2(n+2)}{n-2}} L_{x}^{\frac{2(n+2)}{n-2} } (J_{I})} \leq \frac{\epsilon}{g^{\frac{n-2}{4}} ( (2C)^{I} \| u_{0}
\|_{\tilde{H}^{k}} ) } $. Notice that such a partition exists by (\ref{Eqn:ControlApCrit}), the definition of $g$ \footnote{Recall that
$g(x):= \log^{c} \log (10 + x^{2})$} and the following inequality

\begin{equation}
\begin{array}{ll}
\left( C_{1} g^{a_{n}} (2 M_{0}) \right)^{C_{2} g^{b_{n}+} (2M_{0})} & \gtrsim \sum \limits_{i=1}^{I-1} \frac{1}{g^{{\frac{n+2}{2}}} \left( (2C)^{i} \| u_{0} \|_{\tilde{H}^{k}} \right)} \\
& \geq \sum \limits_{i=1}^{I-1} \frac{1} {
\log^{\frac{(n+2)c}{2}} {( \log{( 10 + (2C)^{2i} \| u_{0} \|^{2}_{\tilde{H}^{k}} )} )}} \\
& \gtrsim  \sum \limits_{i=1}^{I-1} \frac{1} { \log^{\frac{(n+2)c}{2}} ( 2i \log{(2C)} + 2 \log{ ( \| u_{0} \|_{\tilde{H}^{k}} ) } )  } \\
& \gtrsim_{\| u_{0} \|_{\tilde{H}^{k}} } \sum \limits_{i=1}^{I-1} \frac{1}{ i^{\frac{1}{2}}} \\
& \gtrsim_{\| u_{0} \|_{\tilde{H}^{k}} } I^{\frac{1}{2}} \\
\end{array}
\label{Eqn:EstI}
\end{equation}
Moreover, by iterating the procedure in (\ref{Eqn:ModelStrichartz}) and (\ref{Eqn:BoundQFstIter}) we get

\begin{equation}
\begin{array}{ll}
Q([0,T^{'}],u) & \lesssim (2C)^{I} \| u_{0} \|_{\tilde{H}^{k}}
\end{array}
\label{Eqn:EstQPr}
\end{equation}
Therefore by (\ref{Eqn:EstI}) there exists $C^{'}=C^{'}(\| u_{0} \|_{\tilde{H}^{k}})$

\begin{equation}
\begin{array}{l}
\log{I}  \lesssim \log{ (C^{'})}  + C_{2} \log^{(b_{n}+) c}  \left( \log{(10 + 4 M^{2}_{0})}   \right) \log{ \left(  C_{1} \log^{a_{n} c}{ (
\log{(10 + 4 M_{0}^{2})})}   \right)}
\end{array}
\end{equation}
and for $ M_{0}= M_{0} (\| u_{0} \|_{\tilde{H}^{k}})  $ large enough

\begin{equation}
\begin{array}{l}
\log{ (C^{'})}  + C_{2} \log^{(b_{n} +) c}  \left( \log{(10 + 4 M^{2}_{0})}   \right)
\log{ \left(  C_{1} \log^{a_{n} c} { ( \log{(10 + 4 M_{0}^{2})})} \right)} \\
\ll \log { \left( \frac{  \log{ \left( \frac{M_{0}}{ \| u_{0} \|_{\tilde{H}^{k}}} \right) }} {\log{(2C)}} \right)}
\end{array}
\end{equation}
since (recall that $c < \frac{1}{b_{n}}$)

\begin{equation}
\begin{array}{ll}
\frac{ \log{ (C^{'})}  + C_{2} \log^{(b_{n}+) c}  \left( \log{(10 + 4 M^{2}_{0})}   \right) \log{ \left(  C_{1} \log^{a_{n} c}{ ( \log{(10 + 4
M_{0}^{2})})}   \right) }} {\log { \left( \frac{  \log{ \left( \frac{M_{0}}{ \| u_{0} \|_{\tilde{H}^{k}}} \right) }} {\log{(2C)}} \right)}} &
\rightarrow_{M_{0} \rightarrow \infty} 0 \cdot
\end{array}
\end{equation}

\end{itemize}

\item Finite bound of $Q(\mathbb{R},u)$ for all $k > \frac{n}{2}$: this follows from Proposition \ref{Prop:PersReg}.

\item Scattering: it is enough to prove that $e^{- i t \triangle} u(t)$ has a limit as $t \rightarrow \infty$ in $\tilde{H}^{k}$. If $t_{1}< t_{2}$
then by dualizing (\ref{Eqn:Strich}) with $G=0$ (more precisely the estimate
$\| D^{j} u \|_{L_{t}^{\frac{2(n+2)}{n}} L_{x}^{\frac{2(n+2)}{n}} ([t_1,t_2])} \lesssim \| u_0 \|_{\dot{H}^{j}}$ ) we get
from Propositions \ref{Prop:FracLeibn} and  \ref{Prop:EstHighReg}

\begin{equation}
\begin{array}{l}
\| e^{-i t_{1} \triangle} u(t_{1}) - e^{- i t_{2} \triangle} u(t_{2}) \|_{\tilde{H}^{k}} \\
 \lesssim \| D^{k} \left( |u|^{\frac{4}{n-2}} u g(|u|)  \right) \|_{L_{t}^{\frac{2(n+2)}{n+4}} L_{x}^{\frac{2(n+2)}{n+4}} ([t_{1},t_{2}]) } +
\| D \left( |u|^{\frac{4}{n-2}} u g(|u|)  \right) \|_{L_{t}^{\frac{2(n+2)}{n+4}} L_{x}^{\frac{2(n+2)}{n+4}} ([t_{1},t_{2}]) } \\
 \lesssim \| u \|^{\frac{4}{n-2}}_{L_{t}^{\frac{2(n+2)}{n-2}} L_{x}^{\frac{2(n+2)}{n-2}} ([t_{1},t_{2}]) }
\end{array}
\end{equation}
and  we conclude from the previous step that given $\epsilon > 0$ there exists $A(\epsilon)$ such that if $t_{2} \geq t_{1} \geq A(\epsilon)$ then
$ \| e^{-i t_{1} \triangle} u(t_{1}) - e^{- i t_{2} \triangle} u(t_{2}) \|_{\tilde{H}^{k}} \leq \epsilon  $. The Cauchy criterion is
satisfied. Hence scattering.

\end{itemize}

\section{Proof of Proposition \ref{prop:BoundLong}}
The proof relies upon a Morawetz type estimate that we prove in the next subsection:

\begin{lem}{ `` \textbf{Morawetz type estimate}''}
Let $u$ be an $\tilde{H}^{k} -$ solution of (\ref{Eqn:BarelySchrod}) on a compact interval $I$. Let $A > 1$. Then

\begin{equation}
\begin{array}{ll}
\int_{I} \int_{|x| \leq A |I|^{\frac{1}{2}}} \frac{ \tilde{F}(u,\bar{u}) (t,x)}{|x|} dx \, dt & \lesssim E  A  |I|^{\frac{1}{2}}
\end{array}
\label{Eqn:MorawEst}
\end{equation}
with

\begin{equation}
\begin{array}{ll}
\tilde{F}(u,\bar{u})(t,x) & := \int_{0}^{|u|(t,x)} s^{\frac{n+2}{n-2}} \left( \frac{4}{n-2} g(s) + s g^{'}(s) \right) \, ds
\end{array}
\label{Eqn:DeftildeF}
\end{equation}
\label{lem:Morawest}
\end{lem}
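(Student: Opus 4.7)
The plan is to prove (\ref{Eqn:MorawEst}) by a classical Morawetz multiplier argument. Set $R := A\, |I|^{\frac{1}{2}}$ and let $a$ be a smooth radial weight with $a(x) = |x|$ on $\{|x|\leq R\}$, $a(x)$ constant on $\{|x|\geq 2R\}$, $|\nabla a| \lesssim 1$ everywhere, and $|\nabla^{2} a| \lesssim R^{-1}$ in the transition annulus. Define the Morawetz action
\begin{equation*}
M_{a}(t) \; := \; 2 \operatorname{Im} \int_{\mathbb{R}^{n}} \bar{u}(t,x)\, \nabla u(t,x) \cdot \nabla a(x)\, dx.
\end{equation*}
The strategy is to compute $\partial_{t} M_{a}$, integrate over $t\in I$, and close the estimate by bounding $|M_{a}(t)|$ by $E\cdot R$.

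For the identity, writing the equation as $iu_{t} = -\triangle u + N(u)$ with $N(u) = n(|u|) u$, $n(s) := s^{\frac{4}{n-2}} g(s)$, the linear (kinetic) contribution to $\partial_{t} M_{a}$ is the standard stress-energy expression
\begin{equation*}
\int_{\mathbb{R}^{n}} \Big( 4\, a_{jk}\, \operatorname{Re}(\partial_{j} u\, \overline{\partial_{k} u}) \;-\; (\triangle^{2} a)\, |u|^{2} \Big)\, dx,
\end{equation*}
which is non-negative on $\{|x|\leq R\}$ because there $\nabla^{2} a = |x|^{-1}(I - \omega\otimes\omega) \geq 0$ and $\triangle^{2} |x| \leq 0$ in the sense of distributions for $n\in\{3,4\}$ (in $n=3$ producing a further non-negative concentration term $\sim |u(t,0)|^{2}$). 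The nonlinear contribution is handled by the algebraic identity
\begin{equation*}
\operatorname{Re}\!\big( \overline{N(u)}\, \partial_{k} u - \bar{u}\, \partial_{k} N(u) \big) \;=\; -\, n^{'}(|u|)\, |u|^{2}\, \partial_{k} |u| \;=\; -\partial_{k} \Psi(|u|),
\end{equation*}
where $\Psi(s) := \int_{0}^{s} n^{'}(t)\, t^{2}\, dt$. A direct calculation gives $n^{'}(s)\, s^{2} = s^{\frac{n+2}{n-2}} \big(\tfrac{4}{n-2} g(s) + s\, g^{'}(s)\big)$, so $\Psi$ coincides with the $\tilde{F}$ of (\ref{Eqn:DeftildeF})---this is precisely how the combination $\tfrac{4}{n-2} g + s g^{'}$ arises. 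Integrating against $\nabla a$ by parts and using $\triangle a = (n-1)/|x|$ on $\{|x|\leq R\}$ then produces the wanted term $2(n-1) \int_{|x|\leq R} \tilde{F}/|x|\, dx$ inside $\partial_{t} M_{a}$.

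The final step is the boundary bound. Since $\nabla a$ is supported in $\{|x|\leq 2R\}$ with $|\nabla a| \lesssim 1$, H\"older with the splitting $L^{n} \cdot L^{\frac{2n}{n-2}} \cdot L^{2}$ and Sobolev embedding give the mass-free estimate
\begin{equation*}
|M_{a}(t)| \;\lesssim\; \int_{|x|\leq 2R} |u|\, |\nabla u|\, dx \;\lesssim\; R\, \| u(t) \|_{L^{\frac{2n}{n-2}}} \| \nabla u(t) \|_{L^{2}} \;\lesssim\; R\, E,
\end{equation*}
where the crucial point is that the $L^{2}$-mass of $u$ (uncontrolled by $E$ alone) is replaced by $\| u \|_{L^{\frac{2n}{n-2}}}$ and then by $\| \nabla u \|_{L^{2}}$. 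Integrating the Morawetz identity over $t \in I$, dropping the non-negative kinetic term, and crudely bounding the annular error terms by $|I|/R \cdot E = |I|^{\frac{1}{2}} E/A \lesssim A |I|^{\frac{1}{2}} E$ (valid since $A \geq 1$), one obtains (\ref{Eqn:MorawEst}).

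The main obstacle is the careful treatment of the transition annulus $\{R \leq |x|\leq 2R\}$: there $\nabla^{2} a$ is no longer sign-definite, so the kinetic quadratic form loses its favorable sign, and $\triangle a$ deviates from $(n-1)/|x|$. These error contributions must be bounded (using $|\nabla^{2} a| \lesssim R^{-1}$) by $E/R$ pointwise in time, so that time integration absorbs them on the right-hand side using $A\geq 1$. A minor additional point is a short regularization in dimension $3$ to make rigorous sense of the distributional identity $\triangle^{2}|x| = -8\pi \delta_{0}$ near the origin; the delta contribution then enters on the \emph{good} side of the inequality and may simply be discarded.
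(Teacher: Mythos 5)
Your proposal is correct and follows essentially the same route as the paper: a Morawetz multiplier argument with a truncated weight $a \approx |x|$ on $\{|x| \leq A|I|^{\frac{1}{2}}\}$, the identification of the nonlinear commutator as an exact gradient $-\partial_{k}\tilde{F}$ (your one-line computation with $n(s)=s^{\frac{4}{n-2}}g(s)$ reproduces what the paper obtains by summing its antiderivatives $F_{1}+F_{2,1}+F_{2,2}$), positivity of the Hessian and biharmonic terms on the ball, and the mass-free boundary bound $|M_{a}(t)| \lesssim E\,A|I|^{\frac{1}{2}}$ via H\"older and Sobolev. The only cosmetic difference is that the paper works with the regularized, normalized weight $\left(\epsilon^{2}+\left(\tfrac{|x|}{A|I|^{1/2}}\right)^{2}\right)^{\frac{1}{2}}\chi\left(\tfrac{x}{A|I|^{1/2}}\right)$ and lets $\epsilon \to 0$ at the end, instead of invoking the distributional identity for $\triangle^{2}|x|$ at the origin.
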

We prove now Proposition \ref{prop:BoundLong}, following closely an argument in \cite{taorad}. \\

\fbox{\textbf{Step 1}} \\

We divide the interval $J=[t_{1},t_{2}]$ into subintervals $(J_{l}:=[\bar{t}_{l},\bar{t}_{l+1}])_{1  \leq  l \leq L}$ such that

\begin{equation}
\begin{array}{ll}
\| u \|^{\frac{2(n+2)}{n-2}}_{L_{t}^{\frac{2(n+2)}{n-2}}  L_{x}^{\frac{2(n+2)}{n-2}} (J_{l}) } & = \eta_{1}
\end{array}
\end{equation}

\begin{equation}
\begin{array}{ll}
\| u \|^{\frac{2(n+2)}{n-2}}_{L_{t}^{\frac{2(n+2)}{n-2}}  L_{x}^{\frac{2(n+2)}{n-2}} (J_{L}) } & \leq \eta_{1}
\end{array}
\label{Eqn:LastConc}
\end{equation}
with $0 < c_{1}:= c_{1}(E) \ll_{E} 1$ and $\eta_{1} := \frac{c_{1}(E)}{g^{\frac{2(n+2)}{6-n}} (M) } $. It is enough to find an upper bound of $L$ that would
depend on the energy $E$ and $M$. In view of (\ref{Eqn:BoundLong}) , we may replace WLOG the $``\leq''$ sign with the $``=''$ sign in
(\ref{Eqn:LastConc}).  \\
Notice that the value of this parameter, along with the values of the other parameters $\eta_{2}$, $\eta_{3}$ and $\eta$ are not chosen
randomly: they are the largest ones (modulo the energy) such that all the constraints appearing throughout the proof are satisfied. Indeed, if
we consider for example $\eta_{1}$, we basically want to minimize $ L \eta_{1}$. If we go throughout the proof without assigning any value to
$\eta_{1}$ we realize that basically $L \lesssim  \left( \frac{1}{\eta_{1}} \right)^{\frac{1}{\eta_{1}}}$ and therefore $ L \eta_{1} $ is bounded
by a smaller expression as $\eta_{1}$ grows. \\

\fbox {\textbf{Step 2}} \\

We first prove that some norms on these intervals $J_{l}$ are bounded by a constant that depends on the energy.

\begin{res}
We have
\begin{equation}
\begin{array}{ll}
\| D u \|_{L_{t}^{\frac{2(n+2)}{n}} L_{x}^{\frac{2(n+2)}{n}} (J_{l})} & \lesssim_{E} 1
\end{array}
\end{equation}
\label{res:ControlDu}
\end{res}

\begin{proof}

\begin{equation}
\begin{array}{ll}
\| D u \|_{L_{t}^{\frac{2(n+2)}{n}} L_{x}^{\frac{2(n+2)}{n}} (J_{l})} & \lesssim \| D u (\bar{t}_{l}) \|_{L^{2}} + \| D (|u|^{\frac{4}{n-2}} u g(|u|)
) \|_{L_{t}^{\frac{2(n+2)}{n+4}} L_{x}^{\frac{2(n+2)}{n+4}} (J_{l}) } \\
& \lesssim E^{\frac{1}{2}} + \| u \|^{\frac{4}{n-2}}_{L_{t}^{\frac{2(n+2)}{n-2}} L_{x}^{\frac{2(n+2)}{n-2}} (J_{l}) } \| D u
\|_{L_{t}^{\frac{2(n+2)}{n}} L_{x}^{\frac{2(n+2)}{n}} (J_{l})} g(M)  \\
\end{array}
\label{Eqn:DuBd}
\end{equation}
Therefore, by a continuity argument\footnote{Let $K \subset J_l$. Then (\ref{Eqn:DuBd}) also holds if
$J_l$ is replaced with $K$.}, we conclude that $\| D u \|_{L_{t}^{\frac{2(n+2)}{n}} L_{x}^{\frac{2(n+2)}{n}} (J_{l})} \lesssim_{E} 1$.

\end{proof}

\begin{res}
Let $\tilde{J}:= [ \tilde{t}_1, \tilde{t}_2 ] \subset J$ be such that

\begin{equation}
\begin{array}{l}
\frac{\eta_{1}}{2} \leq \| u \|^{\frac{2(n+2)}{n-2}}_{L_{t}^{\frac{2(n+2)}{n-2}} L_{x}^{\frac{2(n+2)}{n-2}} (\tilde{J})} \leq \eta_{1}
\end{array}
\end{equation}
Then
\begin{equation}
\begin{array}{ll}
\| u_{l,\tilde{t}_{j}} \|^{\frac{2(n+2)}{n-2}}_{ L_{t}^{\frac{2(n+2)}{n-2}}  L_{x}^{\frac{2(n+2)}{n-2}} (\tilde{J})} & \gtrsim \eta_{1}
\end{array}
\label{Eqn:LowerBoundLinRes}
\end{equation}
for $j \in \{1,2 \}$. \label{res:linearpartsubs}
\end{res}

\begin{proof}
By Result \ref{res:ControlDu} we have

\begin{equation}
\begin{array}{ll}
\| u - u_{l,\tilde{t}_{j}} \|_{L_{t}^{\frac{2(n+2)}{n-2}} L_{x}^{\frac{2(n+2)}{n-2}} (\tilde{J})}  & \lesssim \| D( |u|^{\frac{4}{n-2}} u g(|u|) )
\|_{L_{t}^{\frac{2(n+2)}{n+4}} L_{x}^{\frac{2(n+2)}{n+4}} (\tilde{J}) } \\
& \lesssim \| D u \|_{L_{t}^{\frac{2(n+2)}{n}}  L_{x}^{\frac{2(n+2)}{n}} (\tilde{J}) } \|  u \|_{L_{t}^{\frac{2(n+2)}{n-2}}
L_{x}^{\frac{2(n+2)}{n-2}} (\tilde{J})}^{\frac{4}{n-2}} g(M) \\
& \lesssim_{E} \|  u \|_{L_{t}^{\frac{2(n+2)}{n-2}} L_{x}^{\frac{2(n+2)}{n-2}} (\tilde{J})}^{\frac{4}{n-2}} g(M) \\
& \ll \eta_{1}^{\frac{n-2}{2(n+2)}}
\end{array}
\end{equation}
Therefore (\ref{Eqn:LowerBoundLinRes}) holds.

\end{proof}

\fbox{\textbf{Step 3}} \\

We define the notion of exceptional intervals and the notion of unexceptional intervals. Let

\begin{equation}
\begin{array}{l}
\eta_{2}:= \left\{
\begin{array}{l}
c_{2} \left(\eta_1 g^{-1}(M) \right)^{22}, \, n=3   \\
c_{2} \left( \eta_{1}^{35} g^{-28}(M) \right)^{\frac{1}{3}}, \, n=4
\end{array}
\right. \end{array}
\label{Eqn:Dfneta2}
\end{equation}
with $ 0< c_{2} \ll_{E} c_{1}$. An interval $J_{l_{0}} = [\bar{t}_{l_{0}}, \bar{t}_{l_{0}+1}]$ of the partition $(J_{l})_{ 1 \leq l \leq L}$ is exceptional if

\begin{equation}
\begin{array}{ll}
\| u_{l,t_{1}} \|^{\frac{2(n+2)}{n-2}}_{L_{t}^{\frac{2(n+2)}{n-2}} L_{x}^{\frac{2(n+2)}{n-2}} (J_{l_{0}})  } + \| u_{l,t_{2}}
\|^{\frac{2(n+2)}{n-2}}_{L_{t}^{\frac{2(n+2)}{n-2}} L_{x}^{\frac{2(n+2)}{n-2}} (J_{l_{0}}) } & \geq \eta_{2}
\end{array}
\end{equation}
Notice that, in view of the Strichartz estimates (\ref{Eqn:Strich}), it is easy to find an upper bound of the cardinal of the exceptional
intervals:

\begin{equation}
\begin{array}{ll}
\card{\{ J_{l}: \, J_{l} \, \mathrm{exceptional} \}} & \lesssim_{E} \eta_{2}^{-1}
\end{array}
\label{Eqn:BoundCardExcep}
\end{equation}

\fbox{ \textbf{Step 4} } \\

Now we prove that on  each unexceptional subintervals $J_{l}$ there is a ball for which we have a mass concentration.

\begin{res}{``\textbf{Mass Concentration}''}
There exists an $x_{l} \in \mathbb{R}^{n}$, two constants $ 0 <  c \ll_{E} 1 $ and $C \gg_{E} 1$ such that for each unexceptional interval $J_{l}$ and
for $t \in J_{l}$

\begin{itemize}

\item if $n=3$

\begin{equation}
\begin{array}{ll}
Mass \left( u(t), B(x_{l}, C  g^{\frac{13}{3}} (M) |J_{l}|^{\frac{1}{2}} )   \right) & \geq c g^{-
\frac{13}{3}} (M) |J_{l}|^{\frac{1}{2}}
\end{array}
\label{Eqn:ConcentrationMassn3}
\end{equation}

\item if $n=4$

\begin{equation}
\begin{array}{ll}
Mass \left( u(t), B(x_{l}, C  g^{\frac{17}{3}} (M) |J_{l}|^{\frac{1}{2}} )   \right) & \geq c g^{-\frac{17}{3}} (M) |J_{l}|^{\frac{1}{2}}
\end{array}
\label{Eqn:ConcentrationMass}
\end{equation}

\end{itemize}

\label{res:ConcentrationMass}
\end{res}

\begin{proof}

By time translation invariance \footnote{i.e if $u$ is a solution of (\ref{Eqn:BarelySchrod}) and $t_{0} \in \mathbb{R}$ then $(t,x) \rightarrow
u(t-t_{0},x)$ is also a solution of (\ref{Eqn:BarelySchrod}). } we may assume that $\bar{t}_{l}=0$. By using the pigeonhole principle and the
reflection symmetry (if necessary) \footnote{if $u$ is a solution of (\ref{Eqn:BarelySchrod})
 then $(t,x) \rightarrow \bar{u}(-t,x) $ is also a solution of (\ref{Eqn:BarelySchrod}). } we may assume that

\begin{equation}
\begin{array}{ll}
\int_{\frac{|J_{l}|}{2}}^{|J_{l}|} \int_{\mathbb{R}^{n}} | u(t,x) |^{\frac{2(n+2)}{n-2}} \, dx \, dt & \geq \frac{\eta_{1}}{4}
\end{array}
\label{Eqn:CriticalNormLarge}
\end{equation}
By the pigeonhole principle there exists $t_{*} $ such that $[(t_{*} -\eta_{3})|J_{l}|, t_{*} |J_{l}| ] \subset \left[0, \frac{|J_{l}|}{2}
\right] $ (with $\eta_{3} \ll 1$) and

\begin{equation}
\begin{array}{ll}
\int_{(t_{*} - \eta_{3}) |J_{l}|}^{t_{*} | J_{l}|} \int_{\mathbb{R}^{n}}  | u(t,x)|^{\frac{2(n+2)}{n-2}} \, dx \, dt \lesssim \eta_{1} \eta_{3}
\end{array}
\end{equation}

\begin{equation}
\begin{array}{ll}
\int_{\mathbb{R}^{n}} | u_{l,t_{1}} ( (t_{*} -\eta_{3}) |J_{l}|,x ) |^{\frac{2(n+2)}{n-2}} \, dx & \lesssim \frac{\eta_{2}}{|J_{l}|}
\end{array}
\label{Eqn:Ptwiseult1}
\end{equation}
Applying Result \ref{res:linearpartsubs} to (\ref{Eqn:CriticalNormLarge}) we have

\begin{equation}
\begin{array}{ll}
\int_{t_{*} |J_{l}|}^{|J_{l}|} \int_{\mathbb{R}^{n}} | e^{i(t -t_{*}|J_{l}|) \triangle} u(t_{*}|J_{l}|,x) |^{\frac{2(n+2)}{n-2}} \, dx \, dt &
\gtrsim_{E} \eta_{1}
\end{array}
\label{Eqn:LowerBoundLin}
\end{equation}
By Duhamel formula we have

\begin{equation}
\begin{array}{ll}
u(t_{*} |J_{l}|) & = e^{i(t_{*}|J_{l}|- t_{1} ) \triangle} u(t_{1}) - i \int_{t_{1}}^{ (t_{*} - \eta_{3}) |J_{l}|  } e^{i(t_{*} |J_{l}|-s)
\triangle} (
|u(s)|^{\frac{4}{n-2}} u(s) g(|u(s)|) ) \, ds \\
& - i \int_{(t_{*} - \eta_{3}) |J_{l}|}^{t_{*}|J_{l}|} e^{i(t_{*} |J_{l}|-s) \triangle} ( |u(s)|^{\frac{4}{n-2}} u(s) g(|u(s)|) ) \, ds
\end{array}
\end{equation}
and, composing this equality with $e^{i(t-t_{*}|J_{l}|) \triangle}$ we get

\begin{equation}
\begin{array}{ll}
e^{i(t-t_{*}|J_{l}|) \triangle} u(t_{*} |J_{l}|) & = u_{l,t_{1}}(t) - i \int_{t_{1}}^{ (t_{*} - \eta_{3}) |J_{l}|  } e^{i(t-s)
\triangle} ( |u(s)|^{\frac{4}{n-2}} u g(|u(s)|) ) \, ds \\
& - i \int_{(t_{*} - \eta_{3}) |J_{l}|}^{t_{*}|J_{l}|} e^{i(t -s) \triangle} ( |u(s)|^{\frac{4}{n-2}} u(s) g(|u(s)|) ) \, ds \\
& = u_{l,t_{1}}(t) + v_{1}(t) + v_{2}(t)
\end{array}
\label{Eqn:Decompulti}
\end{equation}
We get from a variant of the Strichartz estimates (\ref{Eqn:Strich}) and the Sobolev inequality (\ref{Eqn:SobolevIneq1}) \\

\begin{equation}
\begin{array}{l}
\| v_{2} \|_{L_{t}^{\frac{2(n+2)}{n-2}} L_{x}^{\frac{2(n+2)}{n-2}}
\cap L_{t}^{\infty} D^{-1} L_{x}^{2}([t_{*} |J_{l}|, |J_{l}|])} \\
\lesssim \| D(|u|^{\frac{4}{n-2}} u g(|u|))\|_{L_{t}^{\frac{2(n+2)}{n+4}}
L_{x}^{\frac{2(n+2)}{n+4}}([ (t_{*} - \eta_{3}) |J_{l}|, t_{*} |J_{l}| ])} \\
\lesssim \|  D u  \|_{L_{t}^{\frac{2(n+2)}{n}} L_{x}^{\frac{2(n+2)}{n}} ([ (t_{*} -\eta_{3}) |J_{l}|,  t_{*}|J_{l}| ])  } \| u
\|^{\frac{4}{n-2}}_{L_{t}^{\frac{2(n+2)}{n-2}} L_{x}^{\frac{2(n+2)}{n-2}} ([ (t_{*} - \eta_{3}) |J_{l}|, t_{*} |J_{l}| ]) } \\
 g( \| u \|_{L_{t}^{\infty} \tilde{H}^{k}([(t_{*} -\eta_{3}) |J_{l}|, |J_{l}| ]) }  ) \\
 \lesssim_{E} (\eta_{1} \eta_{3})^{\frac{2}{n+2}}  g(M) \\
 \ll \eta_{1}^{\frac{n-2}{2(n+2)}}
\end{array}
\label{Eqn:Dv2Ineq}
\end{equation}
Notice also that $\eta_{2} \ll \eta_{1}$ and that $J_{l}$ is non-exceptional. Therefore $ \| u_{l,t_{1}} \|_{L_{t}^{\frac{2(n+2)}{n-2}}
L_{x}^{\frac{2(n+2)}{n-2}} ([t_{*} |J_{l}|, |J_{l}| ]) } \ll \eta_{1}$ and combining this inequality with (\ref{Eqn:Dv2Ineq}) and
(\ref{Eqn:LowerBoundLin}) we conclude that the  $ L_{t}^{\frac{2(n+2)}{n-2}} L_{x}^{\frac{2(n+2)}{n-2}} $ norm of $v_{1}$ on $[t_{*} |J_{l}|,
|J_{l}| ]$ is bounded from below:

\begin{equation}
\begin{array}{ll}
\| v_{1} \|^{\frac{2(n+2)}{n-2}}_{L_{t}^{\frac{2(n+2)}{n-2}} L_{x}^{\frac{2(n+2)}{n-2}} ([t_{*} |J_{l}|, |J_{l}| ])} & \gtrsim \eta_{1}
\end{array}
\end{equation}
By (\ref{Eqn:Strich}), (\ref{Eqn:Decompulti}) and (\ref{Eqn:Dv2Ineq}) we also have an upper bound of the  $ L_{t}^{\frac{2(n+2)}{n-2}}
L_{x}^{\frac{2(n+2)}{n-2}} $ norm of $v_{1}$ on $[t_{*} |J_{l}|, |J_{l}| ]$

\begin{equation}
\begin{array}{ll}
\| v_{1} \|^{\frac{2(n+2)}{n-2}}_{L_{t}^{\frac{2(n+2)}{n-2}} L_{x}^{\frac{2(n+2)}{n-2}}  ([t_{*} |J_{l}|, |J_{l}|])
\cap L_{t}^{\infty} D^{-1} L_{x}^{2}([t_{*} |J_{l}|, |J_{l}|])  } &  \lesssim_{E} 1
\end{array}
\end{equation}
Now we use a lemma that is proved in Subsection \ref{Subsec:Lemmaregv}.

\begin{lem}{`` \textbf{Regularity of $v_{1}$} ''}
We have

\begin{equation}
\begin{array}{ll}
\| v_{1,h} - v_{1} \|_{L_{t}^{\infty}  L_{x}^{\frac{2(n+2)}{n-2}} ( [t_{*} |J_{l}|, |J_{l}|]) } & \lesssim_{E} |h|^{\alpha} |J_{l}|^{\beta}
\gamma
\end{array}
\label{Eqn:Regv}
\end{equation}
with
\begin{itemize}

\item $\alpha=\frac{1}{5}$ if $n=3$; $\alpha=1$ if $n=4$

\item $\beta=-\frac{1}{5}$ if $n=3$; $\beta = -\frac{2}{3}$ if $n=4$

\item $\gamma= g^{\frac{2}{15}}(M) $ if $n=3$; $\gamma = g^{\frac{1}{3}}(M) $ if $n=4$;

\end{itemize}

\label{lem:regv}
\end{lem}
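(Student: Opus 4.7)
For $t\in[t_{*}|J_{l}|,|J_{l}|]$ the Duhamel source in the definition of $v_{1}$ has been integrated out, so $v_{1}$ is a pure free Schr\"odinger evolution from its snapshot at time $\tau:=(t_{*}-\eta_{3})|J_{l}|$. Writing $V:=v_{1}(\tau)=u(\tau)-u_{l,t_{1}}(\tau)$, we have $v_{1}(t)=e^{i(t-\tau)\triangle}V$, and since $e^{it\triangle}$ commutes with spatial translation,
\[
v_{1,h}(t)-v_{1}(t)=e^{i(t-\tau)\triangle}(V_{h}-V),\qquad t-\tau\geq\eta_{3}|J_{l}|.
\]
Thus the lemma reduces to estimating the right-hand side in $L_{x}^{2(n+2)/(n-2)}$, with the $|J_{l}|^{\beta}$ factor originating from the dispersive smoothing of the temporal gap $t-\tau$ and the $|h|^{\alpha}$ factor from the spatial translation of $V$.

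As a preliminary I would record $\|V\|_{\dot{H}^{1}}\lesssim E^{1/2}$ and $\|V\|_{\dot{H}^{k}}\lesssim M$ from $V=u(\tau)-u_{l,t_{1}}(\tau)$ together with $\|u\|_{L_{t}^{\infty}\tilde{H}^{k}}\leq M$, so that by Sobolev interpolation $\|V\|_{\dot{H}^{s}}\lesssim_{E}M^{(s-1)/(k-1)}$ for every $s\in[1,k]$. The core of the proof then combines two complementary estimates. On the Sobolev side,
\[
\|e^{i(t-\tau)\triangle}(V_{h}-V)\|_{L_{x}^{2(n+2)/(n-2)}}\lesssim\|V_{h}-V\|_{\dot{H}^{2n/(n+2)}}\leq|h|^{\alpha}\|V\|_{\dot{H}^{2n/(n+2)+\alpha}},
\]
by $\dot{H}^{2n/(n+2)}\hookrightarrow L^{2(n+2)/(n-2)}$ and the Fourier-analytic translation inequality $\|f_{h}-f\|_{\dot{H}^{s}}\leq|h|^{\alpha}\|f\|_{\dot{H}^{s+\alpha}}$ for $\alpha\in[0,1]$. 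On the dispersive side, use (\ref{Eqn:DispIneq}) in the Duhamel representation
\[
v_{1,h}(t)-v_{1}(t)=-i\int_{t_{1}}^{\tau}e^{i(t-s)\triangle}\bigl(F(u)_{h}-F(u)\bigr)(s)\,ds
\]
to bound the left-hand side by $\int_{t_{1}}^{\tau}(t-s)^{-2n/(n+2)}\|F(u)_{h}-F(u)\|_{L_{x}^{2(n+2)/(n+6)}}\,ds$, where $(t-s)\gtrsim|J_{l}|$ throughout. The spatial factor is estimated through the pointwise inequality $|F(u_{h})-F(u)|\lesssim(|u|+|u_{h}|)^{4/(n-2)}g(M)|u_{h}-u|$, H\"older in space, $\|u_{h}-u\|_{L^{2}}\leq|h|\|\nabla u\|_{L^{2}}\lesssim E^{1/2}|h|$, and the Sobolev control $\|u(s)\|_{L^{2(n+2)/(n-2)}}\lesssim_{E}M^{\lambda}$ following from $u\in L_{t}^{\infty}\tilde{H}^{k}$.

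The desired bound is then obtained by interpolating geometrically between the Sobolev estimate (which carries no factor of $g(M)$ and no $|J_{l}|$ decay) and the dispersive estimate (which carries $|J_{l}|^{-(n-2)/(n+2)}$ and a full power of $g(M)$), with the interpolation parameter chosen so that the resulting exponents of $|h|$, $|J_{l}|$, and $g(M)$ match the stated values. The dimension-dependent jump between $\alpha=\tfrac{1}{5}$ when $n=3$ and $\alpha=1$ when $n=4$ reflects the constraint $2n/(n+2)+\alpha\leq k$ imposed by the translation inequality together with the bare hypothesis $k>n/2$. The main difficulty is the bookkeeping of the fractional power of $g(M)$: because Proposition \ref{prop:BoundLong} and ultimately Theorem \ref{thm:main} depend delicately on the slow growth of $g(M)$, the interpolation parameter must be allocated so as to minimize the exponent $\gamma$, and this is the step demanding the most care.
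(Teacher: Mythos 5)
Your reduction $v_{1}(t)=e^{i(t-\tau)\triangle}V$ with $\tau=(t_{*}-\eta_{3})|J_{l}|$ is correct and is also the starting point of the paper's argument, but the two estimates you propose to interpolate cannot be combined to give the stated bound, for two separate reasons. First, the $|J_{l}|$ exponent. Your dispersive estimate uses the non-endpoint pair $L^{\frac{2(n+2)}{n+6}}\to L^{\frac{2(n+2)}{n-2}}$, whose decay rate is $|t-s|^{-\frac{2n}{n+2}}$; integrating over $s\in[t_{1},\tau]$ with $t-s\gtrsim\eta_{3}|J_{l}|$ yields only $|J_{l}|^{-\frac{n-2}{n+2}}$, and your Sobolev estimate has no $|J_{l}|$ decay at all. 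A geometric interpolation $X\leq A^{\theta}B^{1-\theta}$ of these two bounds therefore produces at best $|J_{l}|^{-(1-\theta)\frac{n-2}{n+2}}$, which never reaches $\beta=-\frac{n}{n+2}$ (for $n=4$ you need $-\tfrac{2}{3}$ but can get at most $-\tfrac{1}{3}$; this matters downstream, since $\beta$ is exactly what makes $|h|\sim|J_{l}|^{1/2}$ in Step 4). The paper gets the stronger decay by reversing the order of operations: it applies the \emph{endpoint} dispersive estimate (for $n=4$, $L^{\frac{2n}{n+4}}\to L^{\frac{2n}{n-4}}$ on $Dv_{1}$, with decay $|t-s|^{-2}$, hence $|J_{l}|^{-1}$ after time integration; for $n=3$, $L^{1}\to L^{\infty}$, hence $|J_{l}|^{-1/2}$), and only afterwards interpolates the resulting pointwise-in-time bound with the energy-controlled $L^{2}$ or $L^{6}$ norm, which retains the fraction $\frac{n}{n+2}$ (resp.\ $\frac{2}{5}$) of the full decay.

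Second, and more fundamentally for this paper's scheme, both of your ingredient bounds carry positive powers of $M$ rather than of $g(M)$: on the dispersive side you invoke $\|u(s)\|_{L^{2(n+2)/(n-2)}}\lesssim_{E}M^{\lambda}$ (this exponent exceeds $\frac{2n}{n-2}$, so the energy does not control it and interpolation with $L^{\infty}$ costs $M^{\lambda}$ with $\lambda>0$), and on the Sobolev side you invoke $\|V\|_{\dot{H}^{2n/(n+2)+\alpha}}\lesssim_{E}M^{(s-1)/(k-1)}$. No geometric interpolation of two bounds each containing a positive power of $M$ can yield $\gamma=g^{\text{positive}}(M)$, and since $g(M)=\log^{c}(\log(10+M^{2}))$ is doubly logarithmic, replacing $\gamma$ by a power of $M$ destroys the quantitative bookkeeping that drives Proposition \ref{prop:BoundLong} and the choice of $c_{n}$. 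The paper avoids this by never measuring $u$ in a Lebesgue norm above $\frac{2n}{n-2}$: it distributes the weight as $|u|^{\frac{4}{n-2}}g(|u|)=\bigl(|u|\,g^{\frac{n-2}{2n}}(|u|)\bigr)^{\frac{4}{n-2}}g^{\frac{n-2}{n}}(|u|)$, controls the first factor by the conserved energy via $\int|u|^{\frac{2n}{n-2}}g(|u|)\,dx\lesssim E$, and pays only $g^{\frac{n-2}{n}}(M)$ for the remaining $L^{\infty}$ piece. (A smaller point: for $n=4$ your translation inequality requires regularity $\frac{2n}{n+2}+1=\frac{7}{3}$, which is not guaranteed by $k>2$.) To repair the proof you would need to adopt both of these devices — endpoint dispersive estimate first, then interpolation against energy-controlled norms, with the $g$-weighted H\"older splitting throughout.
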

Denote by $v_{1,h}^{av}(x) := \int \chi(y) v_{1}(x + |h|y) \; dy $ with  $\chi$ a bump function with total mass equal to one and such that
$\supp (\chi) \subset B(0,1)$. Then

\begin{equation}
\begin{array}{ll}
\| v_{1,h}^{av} - v_{1} \|_{L_{t}^{\frac{2(n+2)}{n-2}} L_{x}^{\frac{2(n+2)}{n-2}} ([ t_{*}|J_{l}|,|J_{l}|])  } & \lesssim | J_{l}
|^{\frac{n-2}{2(n+2)}} \| v_{1,h}^{av} - v_{1} \|_{L_{t}^{\infty} L_{x}^{\frac{2(n+2)}{n-2}} ([ t_{*}|J_{l}|,|J_{l}|]) } \\
& \lesssim_{E} |h|^{\alpha} | J_{l}|^{\beta + \frac{n-2}{2(n+2)}} \gamma
\end{array}
\end{equation}
Therefore if $h$ satisfies $ |h| := c_{3} |J_{l}|^{- \frac{ \left( \beta + \frac{n-2}{2(n+2)} \right)}{\alpha}} \gamma^{ - \frac{1}{\alpha}}
\eta^{\frac{n-2}{2(n+2) \alpha}}_{1} $ with $ 0 < c_{3} \ll_{E} 1 $ then

\begin{equation}
\begin{array}{ll}
\| v_{1,h}^{av} \|_{L_{t}^{\frac{2(n+2)}{n-2}} L_{x}^{\frac{2(n+2)}{n-2}} ([ t_{*}|J_{l}|,|J_{l}|]) } & \gtrsim \eta_{1}^{\frac{n-2}{2(n+2)}}
\end{array}
\label{Eqn:Ineqv1hBd}
\end{equation}
Now notice that by the Duhamel formula $v_{1}(t) = u_{l,(t_{*} - \eta_{3})|J_{l}|}(t) - u_{l,t_1} (t)  $ and therefore, by the Strichartz
estimates (\ref{Eqn:Strich}) and the conservation of energy, $\| v_{1} \|_{L_{t}^{\infty} L_{x}^{\frac{2n}{n-2}} ([t_{*} |J_{l}|, |J_{l}|])}
\lesssim_{E} 1 $. From that we get $ \| v_{1,h}^{av} \|_{L_{t}^{\frac{2n}{n-2}} L_{x}^{\frac{2n}{n-2}} ([t_{*} |J_{l}|, |J_{l}|]) } \lesssim_{E}
|J_{l}|^{\frac{n-2}{2n}}$ and, by interpolation,

\begin{equation}
\begin{array}{ll}
\| v_{1,h}^{av} \|_{L_{t}^{\frac{2(n+2)}{n-2}} L_{x}^{\frac{2(n+2}{n-2}}([t_{*} |J_{l}|, |J_{l}|]) } & \lesssim \| v_{1,h}^{av}
\|^{\frac{2}{n+2}}_{L_{t}^{\infty} L_{x}^{\infty}([t_{*} |J_{l}|, |J_{l}|]) } \| v_{1,h}^{av} \|^{\frac{n}{n+2}}_{L_{t}^{\frac{2n}{n-2}}
L_{x}^{\frac{2n}{n-2}} ([t_{*} |J_{l}|, |J_{l}|]) }    \\
\end{array}
\end{equation}
and, in view of (\ref{Eqn:Ineqv1hBd})

\begin{equation}
\begin{array}{ll}
\| v_{1,h}^{av} \|_{L_{t}^{\infty} L_{x}^{\infty} ([ t_{*}|J_{l}|,|J_{l}|])} & \gtrsim |J_{l}|^{- \frac{n-2}{4}} \eta^{\frac{n-2}{4}}_{1}
\end{array}
\label{Eqn:LowerBoundLtinfLxinf}
\end{equation}
Writing $ Mass(v(t),B(x,r))  = r^{\frac{n}{2}}  \left( \int_{|y| \leq 1} | v(t,x + r y)    |^{2} \, dy \right)^{\frac{1}{2}} $ we deduce from
Cauchy Schwartz and (\ref{Eqn:LowerBoundLtinfLxinf}) that there exists  $\check{t}_l \in [t_{*}|J_{l}|, |J_{l}| ]$ and $x_{l} \in \mathbb{R}^{n}$ such
that

\begin{equation}
\begin{array}{ll}
Mass \left( v_{1}(\check{t}_{l}),B(x_{l},|h|) \right) & \gtrsim |J_{l}|^{- \frac{n-2}{4}} \eta_{1}^{\frac{n-2}{4}}  |h|^{\frac{n}{2}}
\end{array}
\end{equation}
Therefore, by (\ref{Eqn:UpBdDerivM}) we see that if $R =C_{3}(E) \eta_{1}^{\frac{2-n}{4}} |J_{l}|^{\frac{2+n}{4}} |h|^{-\frac{n}{2}} $
with $C_3 \gg_E 1$ then

\begin{equation}
\begin{array}{ll}
Mass \left( v_{1}( (t_{*} - \eta_{3}) |J_{l}| ), B(x_{l}, R)   \right) & \gtrsim |J_{l}|^{- \frac{n-2}{4}} \eta^{\frac{n-2}{4}}_{1}
|h|^{\frac{n}{2}}
\end{array}
\end{equation}
Notice that $ u \left( (t_{*} - \eta_{3})|J_{l}|  \right)= u_{l,t_{1}} \left( (t_{*} - \eta_{3})|J_{l}|  \right) - i v_{1} \left( (t_{*} -
\eta_{3}) |J_{l}|  \right) $. By H\"older inequality, (\ref{Eqn:Dfneta2}), and (\ref{Eqn:Ptwiseult1})

\begin{equation}
\begin{array}{ll}
Mass \left( u_{l,t_{1}} ( (t_{*} - \eta_{3}) |J_{l}|), B(x_{l},R) \right) & \lesssim R^{\frac{2n}{n+2}} \frac{\eta^{\frac{n-2}{2(n+2)}}_{2}}
{|J_{l}|^{\frac{n-2}{2(n+2)}}} \\
& \ll |J_{l}|^{-\frac{n-2}{4}} \eta_{1}^{\frac{n-2}{4}} |h|^{\frac{n}{2}}
\end{array}
\end{equation}
Therefore $ Mass \left( u ((t_{*} - \eta_{3}) |J_{l}|) ,B(x_{l},R) \right) \sim  Mass \left( v_{1}( (t_{*} - \eta_{3}) |J_{l}| ), B(x_{l}, R)
\right) $. Applying again (\ref{Eqn:UpBdDerivM}) we get

\begin{equation}
\begin{array}{ll}
Mass \left( u (t), B(x_{l},R) \right) & \gtrsim |J_{l}|^{- \frac{n-2}{4}} \eta^{\frac{n-2}{4}}_{1} |h|^{\frac{n}{2}}
\end{array}
\end{equation}
for $t \in J_{l}$. Putting everything together we get (\ref{Eqn:ConcentrationMassn3}) and (\ref{Eqn:ConcentrationMass}).

\end{proof}

Next we use the radial symmetry to prove that, in fact, there is a mass concentration around the origin. \\

\fbox{ \textbf{Step 5} } \\

\begin{res}{`` \textbf{Mass concentration around the origin} ''}
There exists a positive constant $\ll_{E} 1$ (that we still denote by $c$ to avoid too much notation) and a
constant $\tilde{C} \gg_{E} 1$  such that on each unexceptional interval $J_{l}$ we have

\begin{itemize}

\item if $n=3$

\begin{equation}
\begin{array}{ll}
Mass \left( u(t),  B(0, \tilde{C} g^{\frac{169}{3}}(M) |J_{l}|^{\frac{1}{2}}  )  \right) & \geq c
g^{-\frac{13}{3}}(M) |J_{l}|^{\frac{1}{2}}
\end{array}
\end{equation}

\item if $n=4$

\begin{equation}
\begin{array}{ll}
Mass \left( u(t),  B(0, \tilde{C} g^{51}(M) |J_{l}|^{\frac{1}{2}}  )  \right) & \geq c
g^{-\frac{17}{3}}(M) |J_{l}|^{\frac{1}{2}}
\end{array}
\label{Eqn:ConcMassZero}
\end{equation}

\end{itemize}

\label{res:ConcMassZero}
\end{res}

\begin{proof}

We deal with the case $n=4$. The case $n=3$ is treated similarly and the proof is left to the reader.

Let $A:= \tilde{C} g^{51}(M)$ for some $\tilde{C} \gg_{E} C$ (Recall that $C$ is defined in
(\ref{Eqn:ConcentrationMass}) ). There are (a priori) two options:

\begin{itemize}

\item $ |x_{l}| \geq \frac{A}{2} |J_{l}|^{\frac{1}{2}} $ . Then there are at least $ \frac{A}{ 100 C g^{\frac{17}{3}}
(M) } $ rotations of the ball $ B(x_{l}, C g^{\frac{17}{3}} (M) |J_{l}|^{\frac{1}{2}} ) $ that are disjoint. Now,
since the solution is radial, the mass on each of these balls $B_{j}$ is equal to that of the ball
$ B(x_{l}, C  g^{\frac{17}{3}} (M) |J_{l}|^{\frac{1}{2}}) $. But then by H\"older inequality we have

\begin{equation}
\begin{array}{ll}
\| u(t)\|^{\frac{2n}{n-2}}_{L^{2}(B_{j})} & \leq \| u(t) \|^{\frac{2n}{n-2}}_{L^{\frac{2n}{n-2}}(B_{j})} \left( C g^{\frac{17}{3}} (M) |J_{l}|^{\frac{1}{2}} \right)^{\frac{2n}{n-2}}
\end{array}
\end{equation}
and summing over $j$ we see from the equality $\| u(t) \|^{\frac{2n}{n-2}}_{L^{\frac{2n}{n-2}}} \lesssim E $ that

\begin{equation}
\begin{array}{l}
\frac{A}{ 100 C g^{\frac{17}{3}} (M) } \left( c g^{- \frac{17}{3}}(M)
|J_{l}|^{\frac{1}{2}} \right)^{\frac{2n}{n-2}}  \\
\leq E \left( C g^{\frac{17}{3}} (M) |J_{l}|^{\frac{1}{2}} \right)^{\frac{2n}{n-2}}
\end{array}
\end{equation}
must be true. But with the value of $A$ chosen above we see that this inequality cannot be satisfied if $\tilde{C}$ is large enough. Therefore
this scenario is impossible.

\item $|x_{l}| \leq \frac{A}{2} |J_{l}|^{\frac{1}{2}}  $. Then by
(\ref{Eqn:ConcentrationMass}) and the triangle inequality, we see that (\ref{Eqn:ConcMassZero}) holds.
\end{itemize}

\end{proof}

\begin{rem}
In order to avoid too much notation we will still write in the sequel $C$ for $\tilde{C}$  in (\ref{Eqn:ConcMassZero}).
\end{rem}

\fbox{\textbf{Step 6}} \\

Combining the inequality (\ref{Eqn:ConcMassZero}) to the Morawetz type inequality found in Lemma \ref{lem:Morawest} we can prove that at least
one of the intervals $J_{l}$ is large. More precisely

\begin{res}{``\textbf{One of the intervals $J_{l}$ is large }''}
There exists a positive constant $\ll_{E} 1$ (that we still denote by $c$ to avoid too much notation) and $\tilde{l} \in [ 1,..,L ]$ such that

\begin{itemize}

\item if $n=3$

\begin{equation}
\begin{array}{ll}
|J_{\tilde{l}}|  &  \geq c g^{-\frac{2860}{3}}(M) |J|
\end{array}
\end{equation}

\item if $n=4$

\begin{equation}
\begin{array}{ll}
|J_{\tilde{l}}|  &  \geq c g^{-\frac{1972}{3}}(M) |J|
\end{array}
\label{Eqn:DistribInterv}
\end{equation}

\end{itemize}

\label{res:DistribInterv}
\end{res}

\begin{proof}

Again  we shall treat the case $n=4$. The case $n=3$ is left to the reader.

There are two options:

\begin{itemize}

\item $J_{l}$ is unexceptional. Let $R:=  C g^{51}(M)  |J_{l}|^{\frac{1}{2}} $. By H\"older inequality (in space), by
integration in time we have

\begin{equation}
\begin{array}{ll}
\int_{J_{l}} \int_{B(0,R)}  \frac{|u(t,x)|^{\frac{2n}{n-2}}}{|x|} \, dx dt & \geq  |J_{l}|  Mass^{\frac{2n}{n-2}} \left( u(t), B(0,R) \right)
R^{\frac{2-3n}{n-2}}
\end{array}
\end{equation}
After summation over $l$ we see, by (\ref{Eqn:ConcMassZero})  and  (\ref{Eqn:MorawEst}) that

\begin{equation}
\begin{array}{l}
\sum\limits_{l=1}^{L} |J_{l}|  \left(  g^{-\frac{17}{3}} (M)   |J_{l}|^{\frac{1}{2}} \right)^{\frac{2n}{n-2}} \left( C
g^{51} (M) |J_{l}|^{\frac{1}{2}} \right)^{\frac{2-3n}{n-2}} \\
\lesssim E |J|^{\frac{1}{2}}  g^{51} (M)
\end{array}
\end{equation}
and after rearranging, we see that

\begin{equation}
\begin{array}{ll}
\sum \limits_{l=1}^{L} |J_{l}|^{\frac{1}{2}}   g^{-\frac{986}{3}}(M)   & \lesssim E |J|^{\frac{1}{2}}
\end{array}
\end{equation}

\item $J_{l}$ is exceptional. In this case by (\ref{Eqn:BoundCardExcep}) and

\begin{equation}
\begin{array}{ll}
\sum \limits_{l=1}^{L} |J_{l}|^{\frac{1}{2}} & \lesssim_{E}  \eta_{2}^{-1} \sup_{ 1 \leq
l \leq L} |J_{l}|^{\frac{1}{2}} \\
& \lesssim_{E}  \eta_{2}^{-1} |J|^{\frac{1}{2}}
\end{array}
\end{equation}

\end{itemize}

Therefore, writing $\sum\limits_{l=1}^{L} |J_{l}|^{\frac{1}{2}} \geq \frac{|J|}{\sup_{1 \leq l \leq L} |J_{l}|^{\frac{1}{2}} } $, we conclude that
there exists a constant $\ll_{E} 1$ (still denoted by $c$) and $\tilde{l} \in [1,..,L ]$ such that (\ref{Eqn:DistribInterv}) holds.

\end{proof}

\fbox{\textbf{Step 7}} \\

We use a crucial algorithm due to Bourgain \cite{bour} to prove that there are many of those intervals that concentrate.

\begin{res}{`` \textbf{Concentration of intervals} ''}
Let

\begin{equation}
\begin{array}{l}
\eta:= \left\{
\begin{array}{l}
c g^{-\frac{2860}{3}}(M), \, n=3  \\
c g^{-\frac{1972}{3}} (M), \, n=4
\end{array}
\right.
\end{array}
\end{equation}
There exist a time $\bar{t}$, $K > 0$ and intervals $J_{l_{1}}$, ...., $J_{l_{K}}$ such that

\begin{equation}
\begin{array}{ll}
|J_{l_{1}}| \geq  2 |J_{l_{2}}| ... \geq 2^{k-1} |J_{l_{k}}| ... \geq 2^{K-1} |J_{l_{K}}|,
\end{array}
\end{equation}

\begin{equation}
\begin{array}{ll}
dist(\bar{t}, J_{l_{k}}  ) \leq \eta^{-1} |J_{l_k}|,
\end{array}
\label{Eqn:ControlDist}
\end{equation}
and

\begin{equation}
\begin{array}{ll}
K & \geq - \frac{\log{(L)}}{2 \log { \left(  \frac{\eta}{8} \right)}  } \cdot
\end{array}
\label{Eqn:LowerBoundK}
\end{equation}

\end{res}

\begin{proof}

There are several steps

\begin{enumerate}

\item By Result \ref{res:DistribInterv} there exists an interval $J_{l_{1}}$ such that $|J_{l_{1}}| \geq \eta |J|$. We have
$dist(t,J_{l_{1}}) \leq |J| \leq \eta^{-1} |J_{l_{1}}|$, $t \in J$.
\item Remove all the intervals $J_{l}$ such that $|J_{l}| \geq \frac{|J_{l_{1}}|}{2}$. By the property of $J_{l_{1}}$, there are at
most $ 2 \eta^{-1} $ intervals satisfying this property and consequently there are at most $4 \eta^{-1}$ remaining connected components
resulting from this removal.
\item If $L \leq 100 \eta^{-1}$ then we let $K=1$ and we can check that (\ref{Eqn:LowerBoundK}) is satisfied. If not: one of these connected components
(denoted by $K_{1}$) contains at least $\frac{\eta}{8} L $ intervals. Let $L_{1}$ be the number of intervals making $K_{1}$.
\item Apply (1) again: there exists an interval $J_{l_{2}}$ such that $|J_{l_{2}}| \geq \eta |K_{1}|$ and
$dist(t,J_{l_{2}}) \leq |K_{1}| \leq \eta^{-1} |J_{l_{2}}|$, $t \in K_1$. Apply (2) again: remove all the intervals $J_{l}$ such that $|J_{l}| \geq
\frac{|J_{l_{2}}|}{2}$. By the property of $J_{l_{2}}$, there are at most $ 2 \eta^{-1}$ intervals to be removed and there are at most $4
\eta^{-1}$ remaining connected components. Apply (3) again: if $L_{1} \leq 100 \eta^{-1}$ then  we let $K=2$ and we can check that
(\ref{Eqn:LowerBoundK}) is satisfied, since $K_{1}$ contains at least $ \frac{\eta}{8} L $ intervals; if $L_{1} \geq 100 \eta^{-1}$ then one of
the connected components (denoted by $K_{2}$) contains at least $\frac{\eta}{8} L_{1}$ intervals. Let $L_{2}$ be the number of intervals making
$K_{2}$. Then $L_{2} \geq \left( \frac{\eta}{    8} \right)^{2} L $.
\item We can iterate this procedure $K$ times until $L_{K-1} \leq 100 \eta^{-1} $. It is not difficult to see that $K$ satisfies
(\ref{Eqn:LowerBoundK}), since $L_{K-1} \geq \left( \frac{\eta}{8} \right)^{K-1} L $.
\end{enumerate}
\end{proof}

\fbox{\textbf{Step 8}} \\

We prove that $L< \infty$, by using Step $7$ and the conservation the energy.  More precisely

\begin{res}{\textbf{``finite bound of $L$'' }}
There exist two constants  $C_{1} \gg_{E} 1$ and $C_{2} \gg_{E} 1$  such that

\begin{itemize}

\item if $n=3$

\begin{equation}
\begin{array}{l}
L \leq \left( C_{1} g^{\frac{2860}{3}}(M) \right)^{ C_{2}  g^{5772+} (M)  }
\end{array}
\end{equation}

\item if $n=4$

\begin{equation}
\begin{array}{l}
L \leq \left( C_{1} g^{\frac{1972}{3}}(M) \right)^{ C_{2}  g^{\frac{8024}{3}+} (M)  }
\end{array}
\label{Eqn:BoundLFin}
\end{equation}

\end{itemize}

\end{res}

\begin{proof}
Again we shall prove this result for $n=4$. The case $n=3$ is left to the reader. Let $R_{l_k} := C g^{663}(M) |J_{l_k}|^{\frac{1}{2}}$. By Result \ref{res:ConcentrationMass} we have

\begin{equation}
\begin{array}{ll}
Mass \left( u(t),  B(x_{l_{k}},R_{l_k}) \right) & \geq c g^{- \frac{17}{3}} (M) |J_{l_k}|^{\frac{1}{2}}
\end{array}
\label{Eqn:LowerBondMass2}
\end{equation}
for all $t \in J_{l_{k}}$. Even if it means redefining $C$ \footnote{i.e making it larger than its original value modulo a multiplication by
some power of $\max{(1,E)}$} then we see, by (\ref{Eqn:UpBdDerivM}) and (\ref{Eqn:ControlDist}) that (\ref{Eqn:LowerBondMass2}) holds for
$t=\bar{t}$ with $c$ substituted for $\frac{c}{2}$. On the other hand we see that by (\ref{Eqn:MassControl}) that \footnote{Notation:
$\sum\limits_{k^{'}=k+N}^{K} a_{k^{'}}=0$, if $k+N > K$ }

\begin{equation}
\begin{array}{ll}
\sum\limits_{k^{'}=k+N}^{K} \int_{B(x_{l_{k^{'}}} ,R_{l_k'} )} |u(\bar{t},x)|^{2} \, dx & \leq  \left( \frac{1}{2^{N}} + \frac{1}{2^{N+1}}.... +
\frac{1}{2^{K-k}} \right) E R_{l_k}^{2} \\
& \leq \frac{1}{2^{N-1}} E R_{l_k}^{2}
\end{array}
\end{equation}
Now we let  $N = C^{'} \log{(g(M))} $ with $C^{'} \gg_{E} 1 $  so that $ \frac{E R_{l_k}^{2}}{2^{N-1}} \leq \frac{1}{8} c^{2}
g^{-\frac{34}{3}} (M) |J_{l_k}| $. By (\ref{Eqn:LowerBondMass2}) we have

\begin{equation}
\begin{array}{ll}
\sum\limits_{k^{'}=k+N}^{K} \int_{B(x_{_{l_{k^{'}}}} ,R_{l_k'} )} |u(\bar{t},x)|^{2} \, dx & \leq
\frac{1}{2}  \int_{B(x_{l_{k}},R_{l_k})} |u(\bar{t},x)|^{2} \, dx
\end{array}
\end{equation}
Therefore

\begin{equation}
\begin{array}{ll}
\int_{_{B(x_{l_{k}}, R_{l_k}) / \bigcup_{k^{'}=k+N}^{K} B (x_{l_{k^{'}}} ,R_{l_{k'}})  } } \   |u(\bar{t},x)|^{2} \, dx & \geq \frac{1}{2}  \int_{B(x_{l_{k}},R_{l_k})} |u(\bar{t},x)|^{2} \, dx \\
& \geq \frac{c^{2} g^{-\frac{34}{3}} (M)}{4} |J_{l_k}|
\end{array}
\end{equation}
and by H\"older inequality, there exists a positive constant $\ll_{E} 1$ (that we still denote by $c$) such that

\begin{equation}
\begin{array}{ll}
\int_{_{B(x_{l_{k}}, R_{l_k}) / \bigcup_{k^{'}=k+N}^{K} B (x_{l_{k^{'}}} ,R_{l_{k'}})  } } \   |u(\bar{t},x)|^{\frac{2n}{n-2}} \, dx & \geq c
g^{- \frac{8024}{3}} (M)
\end{array}
\end{equation}
and after summation over $k$, we

\begin{equation}
\begin{array}{l}
\frac{K}{N} c g^{-\frac{8024}{3}} (M) \lesssim E
\end{array}
\end{equation}
since $\sum\limits_{k=1}^{K} \chi_{ _{ B(x_{l_{k}}, R_{l_k}) / \cup_{k^{'}=k+N}^{K} B (x_{_{l_{k^{'}}}} ,R_{l_{k'}})}} \leq N$ and $\| u(t)
\|^{\frac{2n}{n-2}}_{L^{\frac{2n}{n-2}}} \lesssim  E$. Rearranging we see from (\ref{Eqn:LowerBoundK}) that there exist two constants $C_{1} \gg_{E} 1$ and $C_{2} \gg_{E} 1$ such that

\begin{equation}
\begin{array}{ll}
L & \leq \left( C_{1} g^{\frac{1972}{3}}(M) \right)^{ C_{2} \log{(g(M))}
g^{\frac{8024}{3}} (M) }
\end{array}
\label{Eqn:BoundL2}
\end{equation}
We see that (\ref{Eqn:BoundLFin}) holds. \\

\fbox{\textbf{Step 9}} \\

This is the final step. Recall that there are $L$ intervals $J_{l}$ and that on each of these intervals we have $\| u
\|_{L_{t}^{\frac{2(n+2)}{n-2}} L_{x}^{\frac{2(n+2)}{n-2}} (J) }^{\frac{2(n+2)}{n-2}} = \eta_{1}$. Therefore, there are two constants $\gg_{E} 1$
(that we denote by $C_{1}$ and $C_{2}$) such that (\ref{Eqn:BoundLong}) holds.

\end{proof}

\subsection{Proof of Lemma \ref{lem:regv}}
\label{Subsec:Lemmaregv}

In this subsection we prove Lemma \ref{lem:regv}. There are two cases

\begin{itemize}

\item $n=3$

By the fundamental theorem of calculus (and the inequality $ \| D u \|_{L_{t}^{\infty} L_{x}^{2} ([ t_{*} |J_{l}|, |J_{l}| ] )} \lesssim
E^{\frac{1}{2}}$ ) we have

\begin{equation}
\begin{array}{ll}
\| u_{h} - u \|_{L_{t}^{\infty} L_{x}^{2} ([ t_{*} |J_{l}|, |J_{l}| ] )} &  \leq E^{\frac{1}{2}} |h|
\end{array}
\label{Eqn:Diff1}
\end{equation}
Moreover, by Sobolev (and the inequality $ \| u \|_{L_{t}^{\infty} L_{x}^{6} ([ t_{*} |J_{l}|, |J_{l}| ] ) } \lesssim E^{\frac{1}{6}} $ ) we
have

\begin{equation}
\begin{array}{ll}
\| u_{h} -u \|_{ L_{t}^{\infty} L_{x}^{6} ([ t_{*} |J_{l}|, |J_{l}| ] )} \leq E^{\frac{1}{6}}
\end{array}
\label{Eqn:Diff2}
\end{equation}
Therefore, by interpolation of (\ref{Eqn:Diff1}) and (\ref{Eqn:Diff2}), we get

\begin{equation}
\begin{array}{ll}
\| u_{h} - u \|_{ L_{t}^{\infty} L_{x}^{3} ([ t_{*} |J_{l}|, |J_{l}| ] )} & \leq E^{\frac{1}{3}} |h|^{\frac{1}{2}}
\end{array}
\end{equation}
Now, by the fundamental theorem of calculus, the inequality $|x| g^{'}(|x|) \lesssim g(|x|)$, (\ref{Eqn:EquivF}) and (\ref{Eqn:EnergyBarely}) we
have

\begin{equation}
\begin{array}{ll}
\|  |u(s)|^{\frac{4}{n-2}} u(s) g(|u(s)|) - |u_{h}(s)|^{\frac{4}{n-2}} u_{h}(s) g(|u_{h}(s)|) \|_{L^{1}} & \lesssim \|
u_{h}(s)- u(s) \|_{L^{3}} \| u(s) g^{\frac{n-2}{2n}}(|u(s)|) \|^{4}_{L^{6}} \\
 &  \| g^{\frac{n-2}{n}}(|u(s)|) \|_{L^{\infty}} \\
& \lesssim_{E} g^{\frac{n-2}{n}}(M) |h|^{\frac{1}{2}}
\end{array}
\end{equation}
and, by the dispersive inequality (\ref{Eqn:DispIneq})  we conclude that

\begin{equation}
\begin{array}{ll}
\| v_{1,h}  - v_{1} \|_{L_{t}^{\infty} L_{x}^{\infty} ([t_{*}|J_{l}, |J_{l}| ])} & \lesssim_{E} \eta_{3}^{-\frac{1}{2}} |J_{l}|^{-\frac{1}{2}}
g^{\frac{n-2}{n}}(M) |h|^{\frac{1}{2}}
\end{array}
\end{equation}
Interpolating this inequality with

\begin{equation}
\begin{array}{l}
\| v_{1,h} - v_{1} \|_{L_{t}^{\infty} L_{x}^{6} ([t_{*} |J_{l}|, |J_{l}|])} = \|  u_{l, (t_{*} - \eta_{3}) |J_{l}|,h} - u_{l,t_{1},h} -
(u_{l,(t_{*} - \eta_{3})|J_{l}|} - u_{l,t_{1}}  ) \|_{L_{t}^{\infty} L_{x}^{6}  ([t_{*} |J_{l}|, |J_{l}|])} \\
\lesssim E^{\frac{1}{2}}
\end{array}
\end{equation}
we get (\ref{Eqn:Regv}).

\item $n=4$ By the fundamental theorem of calculus we have

\begin{equation}
\begin{array}{ll}
\| v_{1,h} - v_{1} \|_{L_{t}^{\infty} L_{x}^{\frac{2(n+2)}{n-2}} ([t_{*} |J_{l}|, |J_{l}|]) } & \lesssim \| D v_{1} \|_{L_{t}^{\infty}
L_{x}^{\frac{2(n+2)}{n-2}} [t_{*} |J_{l}|, |J_{l}|]  } |h|
\end{array}
\end{equation}
But, by interpolation

\begin{equation}
\begin{array}{ll}
\|  D v_{1} \|_{L_{t}^{\infty} L_{x}^{\frac{2(n+2)}{n-2}} ( [t_{*} |J_{l}|, |J_{l}|] )  }  & \lesssim \| D v_{1} \|^{\frac{2}{n+2}}_{
L_{t}^{\infty} L_{x}^{2} ([t_{*} |J_{l}|, |J_{l}| ])}
\|  D v_{1} \|^{\frac{n}{n+2}}_{ L_{t}^{\infty} L_{x}^{\frac{2n}{n-4}}  ( [t_{*} |J_{l}|, |J_{l}|] ) } \\
& \lesssim_{E} \| D v_{1} \|^{\frac{n}{n+2}}_{ L_{t}^{\infty} L_{x}^{\frac{2n}{n-4}}  ([t_{*} |J_{l}|, |J_{l}|]) }
\end{array}
\label{Eqn:Dv1Interp}
\end{equation}
So it suffices to estimate $\| D v_{1} \|_{ L_{t}^{\infty} L_{x}^{\frac{2n}{n-4}}  ([t_{*} |I_{j}|, |I_{j}|]) }$. By (\ref{Eqn:EnergyBarely}),
(\ref{Eqn:EquivF}) and Result \ref{res:ControlDu} we have

\begin{equation}
\begin{array}{ll}
\| D  (|u|^{\frac{4}{n-2}} u g(|u|) ) \|_{ L_{s}^{\infty} L_{x}^{\frac{2n}{n+4}}  ( [t_{1}, (t_{*} - \eta_{3}) |J_{l}| ] ) } & \lesssim \| D u
\|_{ L_{s}^{\infty} L_{x}^{2} ([t_{1}, (t_{*} - \eta_{3}) |J_{l}| ] )  } \| u g^{\frac{n-2}{2n}} (|u|)
\|^{\frac{4}{n-2}}_{L_{s}^{\infty} L_{x}^{\frac{2n}{n-2}} [t_{1}, (t_{*} - \eta_{3}) |J_{l}| ]} \\
& g^{\frac{n-2} {n}} ( \| u \|_{L_{t}^{\infty} \tilde{H}^{k} ( [t_{1}, (t_{*} - \eta_{3}) |J_{l}| ] ) }) \\
& \lesssim_{E}  g^{\frac{n-2}{n}}(M)
\end{array}
\label{Eqn:EstInterm}
\end{equation}
and by combining (\ref{Eqn:EstInterm}) with the dispersive inequality (\ref{Eqn:DispIneq}) we have

\begin{equation}
\begin{array}{ll}
\| D v_{1} \|_{L_{t}^{\infty} L_{x}^{\frac{2n}{n-4}}  ([t_{*} |J_{l}|, |J_{l}|]) } & \lesssim \left\| \int_{t_{1}}^{(t_{*} - \eta_{3})|J_{l}|}
\| D e^{i(t-s) \triangle} ( |u(s)|^{\frac{4}{n-2}} u(s)  g(|u(s)|) ) \|_{L_{x}^{\frac{2n}{n-4}}} \, ds \right\|_{L_{t}^{\infty} ([t_{*} |J_{l}|, |J_{l}|])  } \\
& \lesssim \left\| \int_{t_{1}}^{(t_{*} - \eta_{3})|J_{l}|}  \frac{1}{|t - s|^{2}} \| D ( |u(s)|^{\frac{4}{n-2}} u(s)  g(|u(s)|))_{
L_{x}^{\frac{2n}{n+4}} } \, ds \right\|_{L_{t}^{\infty} ( [t_{*}|J_{l}|, |J_{l}|]  ) } \\
& \lesssim g^{\frac{n-2}{n}} (M) \eta_{3}^{-1} |J_{l}|^{-1}
\end{array}
\label{Eqn:ControlDv1}
\end{equation}
We conclude from (\ref{Eqn:Dv1Interp}) and (\ref{Eqn:ControlDv1}) that (\ref{Eqn:Regv}) holds.

\end{itemize}

\subsection{Proof of Lemma \ref{lem:Morawest}}

By (\ref{Eqn:BarelySchrod}) we have \footnote{Throughout this subsection, all the computations are done for smooth solutions. Then (\ref{Eqn:MorawEst}) holds for an $\tilde{H}^{k}-$ solution by a standard approximation argument with smooth solutions.}

\begin{equation}
\begin{array}{ll}
\partial_{t} \Im (\partial_{k} u \bar{u}) & = \Re \left[ |u|^{\frac{4}{n-2}} \bar{u} g(|u|) \partial_{k} u  -
\partial_{k} ( |u|^{\frac{4}{n-2}} u g(|u|)) \right] + \Re \left( \triangle (\partial_{k} u) \bar{u} -
\overline{\triangle u} \partial_{k} u \right)
\end{array}
\label{Eqn:1Mor}
\end{equation}
Moreover

\begin{equation}
\begin{array}{ll}
\frac{1}{2} \partial_{k} \triangle (|u|^{2}) & = 2 \partial_{j} \Re (\partial_{k} u \overline{\partial_{j} u}) - \Re (\partial_{k} u \triangle
\bar{u}) + \Re (u \triangle \overline{\partial_{k} u})
\end{array}
\label{Eqn:2Mor}
\end{equation}
Therefore, adding (\ref{Eqn:1Mor}) and (\ref{Eqn:2Mor}) leads to

\begin{equation}
\begin{array}{ll}
\partial_{t} \Im (\partial_{k} u \bar{u}) & = -2 \partial_{j} \Re (\partial_{k} u \overline{\partial_{j} u})
+ \frac{1}{2} \partial_{k} \triangle (|u|^{2}) + \Re \left[ |u|^{\frac{4}{n-2}} \bar{u} g(|u|) \partial_{k} u  -
\partial_{k} ( |u|^{\frac{4}{n-2}} u g(|u|)) \bar{u} \right]
\end{array}
\end{equation}
It remains to understand $ \Re \left[ |u|^{\frac{4}{n-2}} \bar{u} g(|u|) \partial_{k} u  -
\partial_{k} ( |u|^{\frac{4}{n-2}} u g(|u|)) \bar{u} \right]$. We write

\begin{equation}
\begin{array}{ll}
\Re \left[ |u|^{\frac{4}{n-2}} \bar{u} g(|u|) \partial_{k} u  -
\partial_{k} ( |u|^{\frac{4}{n-2}} u g(|u|)) \bar{u} \right] & = A_{1} + A_{2}
\end{array}
\end{equation}
with

\begin{equation}
\begin{array}{ll}
A _{1}: = & \Re \left[ |u|^{\frac{4}{n-2}} \bar{u} g(|u|) \partial_{k} u \right]
\end{array}
\end{equation}
and

\begin{equation}
\begin{array}{ll}
A_{2}:= & - \Re \left( \partial_{k} (|u|^{\frac{4}{n-2}} u g(|u|) ) \bar{u} \right)
\end{array}
\end{equation}
We are interested in finding a function $F_{1}: \mathbb{C} \times \mathbb{C} \rightarrow \mathbb{C} $, continuouly differentiable  such that
$F_{1}(z,\bar{z})= \overline{F_{1}(z,\bar{z})}$, $F_{1}(0,0)=0$ and $A_{1} = \partial_{k} F_{1}(u,\bar{u})$. Notice that the first condition
implies in particular that $\partial_{\bar{z}} F_{1}(z,\bar{z})= \overline{\partial_{z} F_{1}(z,\bar{z})}$. Therefore we get, after computation

\begin{equation}
\begin{array}{ll}
\partial_{z} F_{1}(z, \bar{z}) & = \frac{ |z|^{\frac{4}{n-2}} \bar{z} g(|z|)}{2} \\
\partial_{\bar{z}} F_{1}(z,\bar{z}) & = \frac{ |z|^{\frac{4}{n-2}} z g(|z|) }{2}
\end{array}
\end{equation}
and by the fundamental theorem of calculus, if such a function exists, then

\begin{equation}
\begin{array}{ll}
F_{1}(z,\bar{z}) & = \int_{0}^{1} F_{1}^{'}(tz, t \bar{z}) \cdot (z,\bar{z}) \, dt \\
& =  2 \Re \int_{0}^{1} \partial_{z} F_{1} (tz, t \bar{z}) z \, dt \\
& = \int_{0}^{1} |t z|^{\frac{4}{n-2}} t |z|^{2} g(t |z|) \, dt \\
\end{array}
\end{equation}
and, after a change of variable, we get

\begin{equation}
\begin{array}{ll}
F_{1}(z,\bar{z}) & = \int_{0}^{|z|} t^{\frac{n+2}{n-2}} g(t) \, dt
\end{array}
\end{equation}
Conversely it is not difficult to see that $F_{1}$ satisfies all the required conditions.

We turn now to $A_{2}$. We can write

\begin{equation}
\begin{array}{ll}
A_{2} & =A_{2,1} + A_{2,2}
\end{array}
\end{equation}
with

\begin{equation}
\begin{array}{ll}
A_{2,1} & := - \Re \left( \partial_{u} ( |u|^{\frac{4}{n-2}} u g(|u|) ) \bar{u} \partial_{k} u    \right)
\end{array}
\end{equation}
and

\begin{equation}
\begin{array}{ll}
A_{2,2} & : = - \Re \left( \partial_{\bar{u}} ( |u|^{\frac{4}{n-2}} u g(|u|) ) \bar{u} \partial_{k} u     \right)
\end{array}
\end{equation}
Again we search for a function $F_{2,1}: \mathbb{C} \times  \mathbb{C} \rightarrow \mathbb{C}$ and continuously differentiable such that
$F_{2,1}(z,\bar{z})= \overline{F_{2,1}(z,\bar{z})}$ and $A_{2,1}= \partial_{k} F_{2,1} (u,\bar{u})$. By identification we have

\begin{equation}
\begin{array}{ll}
\partial_{z} F_{2,1}(z,\bar{z}) & = - \frac{  |z|^{\frac{4}{n-2}} \bar{z} \left( \left( \frac{2}{n-2} + 1 \right) g(|z|) + \frac{g^{'}(|z|) |z|}{2}    \right)  }
{2} \\
\partial_{\bar{z}} F_{2,1} (z,\bar{z}) & = - \frac{ |z|^{\frac{4}{n-2}} z  \left( \left( \frac{2}{n-2} + 1 \right)  g(|z|) + \frac{g^{'}(|z|) |z|}{2} \right) } {2}
\end{array}
\end{equation}
and by the fundamental theorem of calculus

\begin{equation}
\begin{array}{ll}
F_{2,1}(z,\bar{z})  & = \int_{0}^{1} F_{2,1}^{'}(tz, t \bar{z}) \cdot (z,\bar{z}) \,dt \\
& = \int_{0}^{1} 2 \Re \left( \partial_{z} F_{2,1} (tz, t \bar{z}) z  \right) \, dt \\
& = - \int_{0}^{1} |t z|^{\frac{4}{n-2}} \left(  \left(\frac{2}{n-2}+ 1 \right) g(|tz|) + \frac{g^{'}(|tz|) |tz|}{2} \right) t |z|^{2} \, dt \\
\end{array}
\end{equation}
and, after a change of variable, we get

\begin{equation}
\begin{array}{ll}
F_{2,1} (z,\bar{z}) & = - \int_{0}^{|z|} t^{\frac{n+2}{n-2}} \left( \left( \frac{2}{n-2} + 1 \right) g(t) + \frac{t g^{'} (t)}{2} \right) \, dt
\end{array}
\end{equation}
Again, we can easily check that $F_{2,1}$ satisfies all the required conditions. By using a similar process we can prove that

\begin{equation}
\begin{array}{ll}
A_{2,2} & = \partial_{k} F_{2,2} (u,\bar{u})
\end{array}
\end{equation}
with

\begin{equation}
\begin{array}{ll}
F_{2,2}(z,\bar{z}) & = - \int_{0}^{|z|} t^{\frac{n+2}{n-2}} \left( \frac{2}{n-2} g(t) + \frac{t g^{'}(t)}{2} \right) \, dt
\end{array}
\end{equation}
Therefore we get the local momentum conservation identity

\begin{equation}
\begin{array}{ll}
\partial_{t} \Im (\partial_{k} u \bar{u}) & = -2 \partial_{j} \Re (\partial_{k} u \overline{\partial_{j} u})
+ \frac{1}{2} \partial_{k} \triangle (|u|^{2}) - \partial_{k} \left(  \tilde{F}(u,\bar{u}) \right)
\end{array}
\label{Eqn:LocalMomentumId}
\end{equation}
with $ \tilde{F}(u,\bar{u}) $ defined in (\ref{Eqn:DeftildeF}). This identity has a similar structure to the local momentum conservation that
for a solution $v$ of the energy-critical Schr\"odinger equation

\begin{equation}
\begin{array}{ll}
\partial_{t} \Im \left( \partial_{k} v \bar{v} \right) & = -2 \partial_{j} \Re (\partial_{k} v \overline{\partial_{j} v} ) + \frac{1}{2}
\partial_{k} \triangle (|v|^{2}) + \partial_{k} \left( -\frac{2}{n} |u|^{\frac{2n}{n-2}}    \right)
\end{array}
\end{equation}
With this in mind, we multiply (\ref{Eqn:LocalMomentumId}) by an appropriate spatial cutoff, in the same spirit as Bourgain \cite{bour} and
Grillakis \cite{grill}, to prove a  Morawetz-type estimate. We follow closely an argument of Tao \cite{taorad}: we introduce the weight $a(x):=
\left( \epsilon^{2} +   \left( \frac{|x|}{A |I|^{\frac{1}{2}}} \right)^{2} \right)^{\frac{1}{2}} \chi \left( \frac{x}{A |I|^{\frac{1}{2}}}
\right) $ where $\chi$ is a smooth function,radial such that $\chi(|x|)=1$ for $|x| \leq 1$ and $\chi(|x|)=0$ for $|x| \geq 2$. We give here the
details since this equation, unlike the energy-critical Schr\"odinger equation, has no scaling property. Notice that $a$ is convex on $|x| \leq
A |I|^{\frac{1}{2}}$ since it is a composition of two convex functions. We multiply (\ref{Eqn:LocalMomentumId}) by $\partial_{k} a$ and we
integrate by parts

\begin{equation}
\begin{array}{ll}
\partial_{t} \int_{\mathbb{R}^{n}} \partial_{k} a \Im (\partial_{k} u \bar{u}) & =
 2 \int_{\mathbb{R}^{n}} \partial_{j} \partial_{k} a \Re (\partial_{k} u \overline{\partial_{j} u} )
 - \frac{1}{2} \int_{\mathbb{R}^{n}} \triangle (\triangle a)  |u|^{2} \, dx + \int_{\mathbb{R}^{n}} \triangle a \tilde{F} (u,\bar{u}) (t,x) \, dx
\end{array}
\end{equation}
A computation shows that for $ 0 \leq |x| \leq A |I|^{\frac{1}{2}}$

\begin{equation}
\begin{array}{ll}
\triangle a & = \frac{n-1}{( A |I|^{\frac{1}{2}} )^{2}} \left( \epsilon^{2} + \frac{|x|^{2}}{(A |I|^{\frac{1}{2}})^{2}}  \right)^{-\frac{1}{2}}
+ \frac{\epsilon^{2}}{(A |I|^{\frac{1}{2}})^{2}} \left( \epsilon^{2} + \frac{|x|^{2}}{(A |I|^{\frac{1}{2}})^{2}} \right)^{-\frac{3}{2}}
\end{array}
\end{equation}
and

\begin{equation}
\begin{array}{ll}
- \triangle \triangle a & = \frac{(n-1)(n-3)}{(A |I|^{\frac{1}{2}})^{4}} \left( \epsilon^{2} + \frac{|x|^{2}}{(A |I|^{\frac{1}{2}})^{2}}
\right)^{-\frac{3}{2}} + \frac{6(n-3) \epsilon^{2}}{(A |I|^{\frac{1}{2}})^{4}} \left( \epsilon^{2} + \frac{|x|^{2}}{(A |I|^{\frac{1}{2}})^{2}}
\right)^{-\frac{5}{2}} + \frac{15 \epsilon^{4}}{ (A |I|^{\frac{1}{2}})^{4}} \left( \epsilon^{2} + \frac{|x|^{2}}{(A |I|^{\frac{1}{2}})^{2}}
\right)^{-\frac{7}{2}}
\end{array}
\end{equation}
Moreover we have  $ \left| - \triangle(\triangle a) \right|  \lesssim \frac{1}{(A |I|^{\frac{1}{2}})^{4}}$, $ \left| \triangle a \right|
\lesssim \frac{1}{(A |I|^{\frac{1}{2}})^{2}} $ and $| \partial_{j} \partial_{k} a | \lesssim \frac{1}{(A |I|^{\frac{1}{2}})^{2}}$ for $ A |I|^{\frac{1}{2}}
\leq |x| \leq 2 A |I|^{\frac{1}{2}}$ and $|\partial_{k} a | \lesssim \frac{1}{A |I|^{\frac{1}{2}}} $ for $|x| \leq 2 A |I|^{\frac{1}{2}}$.
Therefore by the previous estimates, (\ref{Eqn:EnergyBarely}), (\ref{Eqn:EquivF}) and the inequality $|x| g^{'}(|x|) \lesssim g(|x|)$ we get,
after integrating on $I \times \mathbb{R}^{n}$ and letting $\epsilon$ go to zero

\begin{equation}
\begin{array}{ll}
 \frac{1}{A |I|^{\frac{1}{2}}}\int_{I} \int_{|x| \leq A |I|^{\frac{1}{2}} } \frac{ \tilde{F} (u,\bar{u}) (t,x)}{|x|} \, dx \, dt
- C   (A |I|^{\frac{1}{2}})^{-2} E |I| - C (A |I|^{\frac{1}{2}})^{-4} E (A |I|^{\frac{1}{2}})^{2} |I| \lesssim E
\end{array}
\end{equation}
for some constant $C \geq 1$. After rearranging we get (\ref{Eqn:MorawEst}).

\section{APPENDIX A}
We shall prove the following Leibnitz rule:

\begin{prop}{\textbf{``A fractional Leibnitz rule''}}
Let $ 0 \leq \alpha < 1$, $k$ and $\beta$ be integers such that $k \geq 2$ and $\beta > k-1 $,  $(r , r_{2}) \in (1,\infty)^{2}$,
$(r_{1},r_{3}) \in (1, \infty]^{2}$ be such that $\frac{1}{r}=
\frac{\beta}{r_{1}} + \frac{1}{r_{2}} +\frac{1}{r_{3}}$. Let $F: \mathbb{R}^{+} \rightarrow \mathbb{R}$ be a $C^{k}$- function and let $G:=\mathbb{R}^{2} \rightarrow \mathbb{R}^{2}$ be a $C^{k}$- function such that

\begin{equation}
\begin{array}{l}
F^{[i]}(x) =O \left( \frac{F(x)}{x^{i}} \right), \; \tau \in [0,1]: \;
\left| F \left( |\tau x + (1-\tau)y|^{2} \right) \right|
\lesssim \left| F(|x|^{2}) \right| + \left| F(|y|^{2}) \right|,
\end{array}
\label{Eqn:Cdtionf}
\end{equation}
and

\begin{equation}
G^{[i]}(x,\bar{x})  = O (|x|^{\beta + 1 -i})
\label{Eqn:CdtionG}
\end{equation}
for $ 0 \leq i \leq k$. Then

\begin{equation}
\begin{array}{ll}
\left\| D^{ k -1 + \alpha} ( G(f,\bar{f}) F(|f|^{2}) \right\|_{L^{r}} & \lesssim \| f \|^{\beta}_{L^{r_{1}}} \| D^{k -1  + \alpha} f \|_{L^{r_{2}}}
\| F(|f|^{2}) \|_{L^{r_{3}}}
\end{array}
\label{Eqn:EstToProveFrac}
\end{equation}
Here $F^{[i]}$ and $G^{[i]}$ denote the $i^{th}$- derivatives of $F$ and $G$ respectively. \\
More generally, let $\tilde{F}: \mathbb{R}^{+} \rightarrow \mathbb{R}$ be a $C^{k}$ function. Substitute $F$ with
$\tilde{F}$ on the right-hand side of the equality of (\ref{Eqn:Cdtionf}), in the inequality of (\ref{Eqn:Cdtionf}), and on the right-hand side of
(\ref{Eqn:EstToProveFrac}). With these substitutions made, if
$F$, $\tilde{F}$, and $G$ satisfy (\ref{Eqn:Cdtionf}) and (\ref{Eqn:CdtionG}), then $F$ and $G$ satisfy (\ref{Eqn:EstToProveFrac}).
 \label{Prop:FracLeibn}

\end{prop}

\begin{proof}
The proof relies upon an induction process, the usual product rule for fractional derivatives

\begin{equation}
\begin{array}{ll}
\| D^{\alpha_{1}} (fg) \|_{L^{q}} & \lesssim \| D^{\alpha_{1}} f \|_{L^{q_{1}}} \| g \|_{L^{q_{2}}} + \| f \|_{L^{q_{3}}} \| D^{\alpha_{1}} g
\|_{L^{q_{4}}}
\end{array}
\label{Eqn:FracProd}
\end{equation}
and the usual Leibnitz rule for fractional derivatives :

\begin{equation}
\begin{array}{ll}
\| D^{\alpha_{2}} H(f) \|_{L^{q}} & \lesssim \| \tilde{H}(f) \|_{L^{q_{1}}}  \| D^{\alpha_{2}} f \|_{L^{q_{2}}}
\end{array}
\label{Eqn:DerivComp}
\end{equation}
if $H$ is $C^{1}$ and it satisfies $ \tau \in [0,1]: \, \left| H^{'}\left( \tau x + (1- \tau) y \right) \right| \lesssim
\tilde{H}(x) + \tilde{H}(y)$, $ 0 \leq \alpha_{1} < \infty$, $ 0 < \alpha_{2}  \leq 1 $, $ (q,q_4) \in (1, \infty)^{2}$, $q_{3} \in (1,\infty]$,
$(q_1,q_2) \in (1,\infty) \times (1,\infty] $ in (\ref{Eqn:FracProd}), $(q_1,q_2) \in (1,\infty] \times (1,\infty) $ in (\ref{Eqn:DerivComp}),
$\frac{1}{q}= \frac{1}{q_{1}} + \frac{1}{q_{2}}$, and
$\frac{1}{q}= \frac{1}{q_3} + \frac{1}{q_4}$ (see e.g. Christ-Weinstein \cite{christwein}, Taylor
\cite{taylor} and references in \cite{taylor})
\footnote{Notice that in \cite{christwein}, they add the restriction $ 0 < \alpha_{1} < 1$. It is not difficult to see that this restriction is
not necessary: see Taylor \cite{taylor} for example}. Moreover we shall use interpolation and the properties of $F$ to control the intermediate
terms.

Let $k=2$. Then

\begin{equation}
\begin{array}{ll}
\left\| D^{2-1 +\alpha} ( G(f,\bar{f}) F(|f|^{2}) ) \right\|_{L^{r}} &  \sim \left\| D^{\alpha } \nabla ( G(f,\bar{f}) F(|f|^{2})  ) \right\|_{L^{r}} \\
& \lesssim \left\| D^{\alpha}( \partial_{z} G(f,\bar{f}) \nabla f  F(|f|^{2}) )  \right\|_{L^{r}} + \left\| D^{\alpha} ( \partial_{\bar{z}}
G(f,\bar{f}) \overline{ \nabla f}  F(|f|^{2})  ) \right\|_{L^{r}} \\
& + \left\| D^{\alpha} \left( F^{'}(|f|^{2}) \left( 2 \Re \left( \bar{f}  \nabla f \right)  G(f,\bar{f}) \right) \right) \right\|_{L^{r}}  \\
& \lesssim  A_{1} + A_{2} + A_{3}
\end{array}
\end{equation}
We estimate $A_{1}$. $A_{2}$ is estimated in a similar fashion. By (\ref{Eqn:FracProd}), (\ref{Eqn:DerivComp}) and the assumption
(\ref{Eqn:Cdtionf})

\begin{equation}
\begin{array}{l}
A_{1} \lesssim \| D^{\alpha} ( \partial_{z} G(f,\bar{f})  F(|f|^{2}) )  \|_{L^{r_{4}}} \| D f \|_{L^{r_{5}}} +
\| \partial_{z} G(f,\bar{f}) F(|f|^{2}) \|_{L^{r_{6}}} \| D^{ (2-1) + \alpha} f \|_{L^{r_{2}}} \\
\lesssim \| f \|^{\beta -1}_{L^{r_{1}}} \| F(|f|^{2}) \|_{L^{r_{3}}} \| D^{\alpha} f \|_{L^{r_{8}}} \| D f  \|_{L^{r_{5}}} + \| f
\|_{L^{r_{1}}}^{\beta} \| D^{(2-1)+\alpha} f \|_{L^{r_{2}}} \| F(|f|^{2}) \|_{L^{r_{3}}}
\end{array}
\label{Eqn:EstFrac1}
\end{equation}
with $\frac{1}{r}= \frac{1}{r_{4}} + \frac{1}{r_{5}}$, $\frac{1}{r}= \frac{1}{r_{6}} + \frac{1}{r_{2}} $, $\frac{1}{r_{4}} = \frac{\beta
-1}{r_{1}} + \frac{1}{r_{3}} + \frac{1}{r_{8}}$, $\frac{1}{r_{5}}= \frac{1 - \theta_{1}}{r_{1}} + \frac{\theta_{1}}{r_{2}}$ and $\theta_{1} =
\frac{1}{1+ \alpha}$. Notice that these relations imply that $\frac{1}{r_{8}}= \frac{\theta_{1}}{r_{1}} + \frac{1 - \theta_{1}}{r_{2}}$. Now, by
complex interpolation, we have

\begin{equation}
\begin{array}{ll}
\| D^{\alpha} f \|_{L^{r_{8}}} & \lesssim \| f \|^{\theta_{1}}_{L^{r_{1}}} \| D^{ (2-1) + \alpha}  f \|^{1- \theta_{1}}_{L^{r_{2}}}
\end{array}
\label{Eqn:Interp11}
\end{equation}
and

\begin{equation}
\begin{array}{ll}
 \| D f  \|_{L^{r_{5}}} & \lesssim \| f \|^{1- \theta_{1}}_{L^{r_{1}}} \| D^{ (2-1) + \alpha} f  \|^{\theta_{1}}_{L^{r_{2}}}
\end{array}
\label{Eqn:Interp12}
\end{equation}
Plugging (\ref{Eqn:Interp11}) and (\ref{Eqn:Interp12}) into (\ref{Eqn:EstFrac1}) we get (\ref{Eqn:EstToProveFrac}).

We estimate $A_{3}$.

\begin{equation}
\begin{array}{ll}
A_{3} & \lesssim \sum \limits_{\tilde{f} \in \{ f,\bar{f} \}}  \left\| D^{\alpha} \left( F^{'}(|f|^{2})  \tilde{f} G(f,\bar{f}) \right) \right\|_{L^{r_{4}}} \| D f \|_{L^{r_{5}}} + \|
D^{\alpha +1} f \|_{L^{r_{2}}} \left\| F^{'}(|f|^{2}) \tilde{f}  G(f,\bar{f}) \right\|_{L^{r_{6}}} \\
& \lesssim A_{3,1} + A_{3,2}
\end{array}
\end{equation}
Using the assumption $F^{'}(|x|^{2}) = O \left( \frac{F(|x|^{2})}{|x|^{2}}   \right)$ we get $ A_{3,2} \lesssim \| f \|^{\beta}_{L^{r_{1}}}
\| D^{1+ \alpha} f \|_{L^{r_{2}}} \| F(|f|^{2}) \|_{L^{r_{3}}} $. Moreover, by (\ref{Eqn:DerivComp}), the assumptions on $F$ and $G$, (\ref{Eqn:Interp11}) and
(\ref{Eqn:Interp12}) we get

\begin{equation}
\begin{array}{ll}
A_{3,1} & \lesssim \| F(|f|^{2}) |f|^{\beta -1}  \|_{L^{r_{7}}} \| D^{\alpha} f \|_{L^{r_{8}}} \| D f \|_{L^{r_{5}}} \\
& \lesssim  \| f \|^{\beta}_{L^{r_{1}}}  \| D^{1+ \alpha} f \|_{L^{r_{2}}} \| F(|f|^{2}) \|_{L^{r_{3}}}
\end{array}
\end{equation}
with $\frac{1}{r_{7}} + \frac{1}{r_{8}}= \frac{1}{r_{4}}$. The more general statement follows exactly the same steps and its proof
is left to the reader. \\
Now let us assume that the result is true for $k$. Let us prove that it is also true for $k+1$. By (\ref{Eqn:FracProd}) we have

\begin{equation}
\begin{array}{ll}
\| D^{k + \alpha} (G(f,\bar{f}) F(|f|^{2})) \|_{L^{r}} & \sim \| D^{k-1 + \alpha} \nabla ( G(f,\bar{f}) F(|f|^{2}) )  \|_{L^{r}} \\
& \lesssim \| D^{k-1 + \alpha} \partial_{z} G(f,\bar{f}) \nabla f F(|f|^{2}) \|_{L^{r}} + \| D^{k-1 + \alpha} \partial_{\bar{z}} G(f,\bar{f})
\overline{\nabla f} F(|f|^{2}) \|_{L^{r}} \\
& + \left\| D^{k-1 + \alpha} \left[ G(f,\bar{f}) F^{'}(|f|^{2})  \left( 2 \Re \left( \bar{f} \nabla f \right) \right) \right]  \right\|_{L^{r}} \\
& \lesssim A^{'}_{1} + A^{'}_{2} + A^{'}_{3}
\end{array}
\end{equation}
We estimate $A^{'}_{1}$ and $A^{'}_{3}$. $A^{'}_{2}$ is estimated in a similar fashion as $A^{'}_{1}$. By (\ref{Eqn:FracProd}),
(\ref{Eqn:DerivComp}) and the assumption  $| \partial_{z} G (f,\bar{f}) | \lesssim |f|^{\beta}$ we have

\begin{equation}
\begin{array}{ll}
A^{'}_{1} & \lesssim \| D^{k + \alpha} f  \|_{L^{r_{2}}}  \| \partial_{z} G(f,\bar{f}) F(|f|^{2}) \|_{L^{r_{6}}} + \| D^{k - 1 + \alpha} (
\partial_{z} G(f,\bar{f}) F(|f|^{2}) ) \|_{L^{r^{'}_{4}}}  \| D f \|_{L^{r^{'}_{5}}} \\
& \lesssim \| f \|^{\beta}_{L^{r_{1}}} \| D^{(k + 1) - 1 + \alpha} f \|_{L^{r_{2}}}  \| F(|f|^{2}) \|_{L^{r_{3}}} + A^{'}_{1,1}
\end{array}
\label{Eqn:EstAprime1}
\end{equation}
with $r^{'}_{4}$, $r^{'}_{5}$ such that $ \frac{1}{r^{'}_{4}} + \frac{1}{r^{'}_{5}} = \frac{1}{r} $, $\frac{1}{r^{'}_{5}} = \frac{1-
\theta_{1}^{'}}{r_{1}} + \frac{\theta_{1}^{'}}{r_{2}}$ and $\theta^{'}_{1}= \frac{1}{k + \alpha}$. Notice that, since we assumed that the result
is true for $k$, we get, after checking that  $\partial_{z} G$ satisfies the right assumptions

\begin{equation}
\begin{array}{ll}
\| D^{k - 1 + \alpha} ( \partial_{z} G(f,\bar{f}) F(|f|^{2}) )  \|_{L^{r^{'}_{4}}} & \lesssim \| f \|_{L^{r_{1}}}^{\beta -1} \| D^{k - 1 + \alpha} f
\|_{L^{r^{'}_{8}}}\| F(|f|^{2}) \|_{L^{r_{3}}}
\end{array}
\label{Eqn:InductionIneq}
\end{equation}
with $r^{'}_{8}$ such that $\frac{1}{r^{'}_{4}} = \frac{\beta -1}{r_{1}} + \frac{1}{r^{'}_{8}} + \frac{1}{r_{3}}$. Notice also that, by complex
interpolation

\begin{equation}
\begin{array}{ll}
\| D f \|_{L^{r^{'}_{5}}} & \lesssim \| f \|^{ 1 - \theta^{'}_{1}}_{L^{r_{1}}} \| D^{(k+1)- 1 + \alpha} f \|^{\theta^{'}_{1}}_{L^{r_{2}}}
\end{array}
\label{Eqn:Interp1prime1}
\end{equation}
and

\begin{equation}
\begin{array}{ll}
\| D^{k-1 + \alpha} f \|_{L^{r^{'}_{8}}} & \lesssim \| f \|^{\theta^{'}_{1}}_{L^{r_{1}}} \| D^{(k + 1) - 1 + \alpha} f \|^{ 1 -
\theta^{'}_{1}}_{L^{r_{2}}}
\end{array}
\label{Eqn:Interp1prime2}
\end{equation}
Combining (\ref{Eqn:InductionIneq}), (\ref{Eqn:Interp1prime1}) and (\ref{Eqn:Interp1prime2}) we have

\begin{equation}
\begin{array}{ll}
A^{'}_{1,1} & \lesssim \| f \|^{\beta}_{L^{r_{1}}} \| D^{k + \alpha} f \|_{L^{r_{2}}}  \| F(|f|^{2}) \|_{L^{r_{3}}}
\end{array}
\end{equation}
Plugging this bound into (\ref{Eqn:EstAprime1}) we get the required bound for $A_{1,1}^{'}$.

We turn to $A^{'}_{3}$. Let $\tilde{F}(x):=x F^{'}(x)$. From the induction assumption applied to $\tilde{F}$ we get

\begin{equation}
\begin{array}{ll}
A^{'}_{3} & \lesssim \sum \limits_{\tilde{f} \in \{f,\bar{f}\}}  \left\| D^{k-1 + \alpha} \left[ G(f,\bar{f}) F^{'}(|f|^{2}) \tilde{f} \right]  \right\|_{L^{r^{'}_{4}}} \| D f
\|_{L^{r^{'}_{5}}} + \| D^{k + \alpha} f
\|_{L^{r_{2}}} \| G(f,\bar{f}) F^{'}(|f|^{2}) \|_{L^{r_{6}}} \\
& \lesssim \| f \|^{\beta - 1}_{L^{r_{1}}} \| D^{k-1 + \alpha} f \|_{L^{r_{8}^{'}}}
\|F(|f|^{2}) \|_{L^{r_{3}}} \| D f \|_{L^{r_{5}^{'}}} + \| D^{k + \alpha} f
\|_{L^{r_{2}}} \| f
\|^{\beta}_{L^{r_{1}}} \| F(|f|^{2}) \|_{L^{r_{3}}} \\
& \lesssim \| f \|^{\beta}_{L^{r_{1}}} \| D^{k + \alpha} f \|_{L^{r_{2}}} \| F(|f|^{2}) \|_{L^{r_{3}}}
\end{array}
\end{equation}
Again the more general statement follows exactly the same steps and its proof
is left to the reader.

\end{proof}

\section{APPENDIX B}
We shall prove the following proposition:

\begin{prop}
Let $\lambda \in \mathbb{N}^{*}$  and $(Q,R)$ be such that
$\left( \frac{1}{Q}, \frac{1}{R} \right) =  \left( \frac{(\lambda -1) (n-2)}{2(n+2)} + \frac{n}{2(n+2)} \right) (1,1)$.
Let $J$ be an interval. Let $k > \frac{n}{2}$. Let
$ \bar{Q}_{k}(J,u)  := \| u \|_{L_{t}^{\infty} \tilde{H}^{k}(J)} +
\| D u \|_{L_{t}^{\frac{2(n+2)}{n}} L_{x}^{\frac{2(n+2)}{n}}(J)}
+ \| D^{k} u \|_{L_{t}^{\frac{2(n+2)}{n}} L_{x}^{\frac{2(n+2)}{n}}(J)} $.  Let $\psi(k)$ be defined
as follows: if $ \frac{n}{2} < k < \frac{n+2}{n-2}$ then $\psi(k) := k$ and if $k \geq \frac{n+2}{n-2}$ then
$\psi(k) := \frac{n+2}{n-2}-$. There exists $\bar{C} >0$ such that

\begin{equation}
\begin{array}{ll}
\left\| D^{k}(u^{\lambda} g(|u|) ) \right\|_{L_{t}^{Q} L_{x}^{R}(J)} & \lesssim
\| u \|^{\lambda-1}_{L_{t}^{\frac{2(n+2)}{n-2}} L_{x}^{\frac{2(n+2)}{n-2}}(J)} \bar{Q}_{k}(J,u)
\langle \bar{Q}_{\frac{n}{2}+} (J,u) \rangle^{\bar{C}} \\
&
\left(
\begin{array}{l}
g \left( \bar{Q}_{k}(J,u) \right) + \langle \bar{Q}_{\psi(k)}(J,u) \rangle^{\bar{C}}
+ \langle \bar{Q}_{k- \frac{1}{4}} (J,u) \rangle^{\bar{C}}
\end{array}
\right) \cdot
\end{array}
\label{Eqn:EstHighReg}
\end{equation}
The same estimate holds if $u^{\lambda}$ is replaced with $u^{\lambda_1} \bar{u}^{\lambda_2}$ with $(\lambda_1,\lambda_2) \in \mathbb{N}^{2}$ such that $\lambda_1 + \lambda_2 = \lambda$, or if $g(|u|)$ is replaced with
$\tilde{g}^{'}(|u|^{2}) u^{\lambda_3} \bar{u}^{\lambda_4}$ with $(\lambda_3,\lambda_4) \in \mathbb{N}^{2}$ such that
$\lambda_3 + \lambda_4 = 2$ and  $\tilde{g}(x) := \log^{c} \log \left( 10 + x \right)$.
\label{Prop:EstHighReg}
\end{prop}

\begin{proof}
Let $k= m + \alpha$  with $ 0 \leq \alpha < 1$ and $m$ integer. Then by the product rule (see proof in
Appendix $A$) and the Sobolev embedding (\ref{Eqn:SobolevIneq2}) we have

\begin{equation}
\begin{array}{ll}
\left\| D^{k} (u^{\lambda}  g(|u|) \right\|_{_{L_{t}^{Q}  L_{x}^{R} (J)}} & \lesssim
\| D^{k} u \|_{L_{t}^{\frac{2(n+2)}{n}} L_{x}^{\frac{2(n+2)}{n}}(J)}
\| u \|_{L_{t}^{\frac{2(n+2)}{n-2}} L_{x}^{\frac{2(n+2)}{n-2}}(J)}^{\lambda - 1} g \left(  \| u \|_{L_{t}^{\infty} \tilde{H}^{k}(J)} \right) \\
& + \| D^{k} g(|u|) \|_{L_{t}^{\frac{2(n+2)}{n}} L_{x}^{\frac{2(n+2)}{n}}(J)}
\| u \|^{\lambda - 1}_{L_{t}^{\frac{2(n+2)}{n-2}} L_{x}^{\frac{2(n+2)}{n-2}}(J)} \\
& \| u \|_{L_{t}^{\infty} \tilde{H}^{\frac{n}{2}+}(J)}
\end{array}
\label{Eqn:BoundDkugu}
\end{equation}
Let $RHS'$ be the right-hand side of (\ref{Eqn:EstHighReg}) multiplied by $\| u \|^{\lambda -1}_{L_{t}^{\frac{2(n+2)}{n-2}} L_{x}^{\frac{2(n+2)}{n-2}} (J)}$. We have

\begin{equation}
\begin{array}{ll}
\left\| D^{k} g(|u|) \right\|_{L_{t}^{\frac{2(n+2)}{n}}  L_{x}^{\frac{2(n+2)}{n}} (J)} &
\lesssim  \sum\limits_{\gamma \in \mathbb{N}^{n}: \; |\gamma| = m}  \| D^{\alpha} \partial^{\gamma} \left( g(|u|) \right)
\|_{L_{t}^{\frac{2(n+2)}{n}}  L_{x}^{\frac{2(n+2)}{n}} (J)},
\end{array}
\nonumber
\end{equation}
Let $X := \partial^{\gamma} \left( g(|u|) \right)$. Expanding we see that $X$ is a finite sum of terms
of the form $X' := \partial^{\bar{\theta}} \tilde{g}(|u|^{2}) X_{0}^{'} X_1^{'} ... X_{m}^{'}$ with

\begin{equation}
\begin{array}{ll}
X^{'}_{p} & := (\partial^{\delta_{p,1}} u)^{\theta_{p,1}}.... (\partial^{\delta_{p,q}} u)^{\theta_{p,q}}
(\partial^{\bar{\delta}_{p,1}} \bar{u})^{\bar{\theta}_{p,1}}.... (\partial^{\bar{\delta}_{p,\bar{q}}} \bar{u})^{\bar{\theta}_{p,\bar{q}}} \cdot
\end{array}
\nonumber
\end{equation}
Here $p \in \{0,1,...,m \}$, $(\delta_{p,j},\bar{\delta}_{p,j}) \in \mathbb{N}^{n} \times \mathbb{N}^{n}$,  $\bar{\theta} \in \mathbb{N}^{*}$, and $(\theta_{p,j},\bar{\theta}_{p,j}) \in \mathbb{N} \times
\mathbb{N} $ are such that $|\delta_{p,j}| = |\bar{\delta}_{p,j}| = p$ and $ \sum\limits_{l=1}^{m} l \theta'_{l} = m $
with $\theta'_{l} := \sum \limits_{j=1}^{q} \theta_{l,j} + \sum \limits_{j=1}^{\bar{q}} \bar{\theta}_{l,j} $.  \\
We prove the following claim:\\
\underline{Claim}:\\

\begin{enumerate}

\item $\theta'_{m} \in \{0, 1 \}$, and if $\theta'_{m}=1$ then  $\theta'_{m-1}=...=\theta'_{1}=0$.

\item[$$]

\item Let $l \in \{1,...,m-1 \}$. If $\theta'_{m}=...=\theta'_{m-(l-1)}=0$  and $\theta'_{m-l} \neq 0$  then
$\sum \limits_{j=1}^{m-l} \theta'_{j} \leq l+1$.

\end{enumerate}
The proof of the first statement is left to the reader. Clearly $\theta'_{m-l} \leq \frac{m}{m-l} \leq l+1$.
Hence $\theta'_{m-l} =q$ with $q \in \{1,...,l+1 \}$. From  $(m-l)q + \sum \limits_{j=1}^{m-(l+1)}j \theta'_{j} \leq m$
we get $\sum \limits_{j=1}^{m-(l+1)} \theta'_{j} \leq  l q - m(q-1)  \leq l +1 -q $., which implies that
the estimate of the second claim holds. \\
The following elementary estimates hold: \footnote{In the sequel $\tilde{H}^{k,p} := D^{-1} L^{p} \cap D^{- k} L^{p}$}
\begin{equation}
\begin{array}{l}
\left( \; \left( \left( n=4 \; \text{and} \; l \geq 2 \right) \; \text{or} \; (n=3 \; \text{and} \; l \geq 1) \right) \; \text{and}
\; 1 \leq \delta \leq l+1 \; \text{and} \; 1 \leq \bar{m} \leq m-l \; \text{and} \; 0 \leq \bar{\alpha} \leq \alpha \; \right) \; \; \text{or} \\
\left( \; (n,l)=(4,1) \; \text{and} \; 1 \leq \delta \leq 2 \; \text{and} \; 1 \leq \bar{m} \leq m-2 \; \text{and} \; 0 \leq \bar{\alpha} \leq \alpha \;  \right): \\
\left\| D^{\bar{m} + \bar{\alpha}} u  \right\|_{L_{t}^{\frac{2 \delta (n+2)}{n}} L_{x}^{\frac{2 \delta (n+2)}{n}} (J) }
\lesssim  \| u \|_{L_{t}^{\frac{2 \delta (n+2)}{n}} \tilde{H}^{k - \frac{1}{4}, \frac{2(n+2) \delta}{(n+2) \delta - 2}} (J) }; \\
\\
\left( \left( n=4 \; \text{and} \; l \in \{ 0,1 \} \right) \; \text{or} \; \left( n=3 \; \text{and} \; l=0 \right) \right) \; \text{and} \;
 1 \leq \delta \leq l+1 \; \text{and} \; 1 \leq \bar{m} \leq m-l \; \text{and} \; 0 \leq \bar{\alpha} \leq \alpha : \\
\left\| D^{\bar{m} + \bar{\alpha}} u  \right\|_{L_{t}^{\frac{2 \delta (n+2)}{n}} L_{x}^{\frac{2 \delta (n+2)}{n}} (J) }
\lesssim  \| u \|_{L_{t}^{\frac{2 \delta (n+2)}{n}} \tilde{H}^{k, \frac{2(n+2) \delta}{(n+2) \delta - 2}} (J) }; \\
\\
l \geq 2 \;  \text{and} \;  1 \leq \delta \leq l+2 \; \text{and} \; 0 \leq \bar{m} \leq m-l : \\
\\
\left\| D^{\bar{m}} u  \right\|_{L_{t}^{\frac{2 \delta (n+2)}{n}} L_{x}^{\frac{2 \delta (n+2)}{n}} (J) }
\lesssim  \| u \|_{L_{t}^{\frac{2 \delta (n+2)}{n}} \tilde{H}^{k - \frac{1}{4}, \frac{2(n+2) \delta}{(n+2) \delta - 2}} (J) };
\end{array}
\nonumber
\end{equation}
\begin{equation}
\begin{array}{l}
n=4: \left\{
\begin{array}{l}
\alpha \leq \bar{\alpha} \leq (\alpha +)+: \, \| D^{\bar{\alpha}} P_{\geq 1} u \|_{L_{t}^{\infty} L_{x}^{\infty-}(J)} \lesssim \| D^{\psi(k)} u \|_{L_{t}^{\infty} L_{x}^{2}(J)}, \\
\alpha > 0, \, 1 \leq \bar{m} \leq m-1: \; \| D^{\bar{m}} u \|_{L_{t}^{\frac{4(n+2)}{n}} L_{x}^{\frac{4(n+2)}{n}+}(J)} \lesssim
\| u \|_{L_{t}^{\frac{4(n+2)}{n}} \tilde{H}^{k,\frac{4(n+2)}{2(n+2) -2}} (J)}, \; \text{and} \\
\alpha > 0: \, \| D^{m} u \|_{L_{t}^{\frac{2(n+2)}{n}} L_{x}^{\frac{2(n+2)}{n}+} (J)} \lesssim \| u \|_{L_{t}^{\frac{2(n+2)}{n}} \tilde{H}^{k,\frac{2(n+2)}{n}}(J)} ;
\end{array}
\right. \\
n=3: \left\{
\begin{array}{l}
\| D u \|_{L_{t}^{\frac{2(n+2)}{n}} L_{x}^{\frac{15}{2}+}(J)} \lesssim
\| u \|_{L_{t}^{\frac{2(n+2)}{n}} \tilde{H}^{k,\frac{2(n+2)}{n}} (J)}, \;
\text{and}  \\
\alpha \leq \bar{\alpha} \leq (\alpha +)+:
\left\{
\begin{array}{l}
m >1 : \;  \| D^{\bar{\alpha}} P_{\geq 1} u \|_{L_{t}^{\infty} L_{x}^{\infty}(J)}  \lesssim
\| D^{\psi(k)} u \|_{L_{t}^{\infty} L_{x}^{2}(J)} \\
m=1: \; \| D^{\bar{\alpha}} P_{\geq 1} u \|_{L_{t}^{\infty} L_{x}^{6-}(J)}  \lesssim
\| D^{\psi(k)} u \|_{L_{t}^{\infty} L_{x}^{2}(J)}
\end{array}
\right.
\end{array}
\right.
\end{array}
\nonumber
\end{equation}
There exists $1  \geq \theta  \geq 0$ \footnote{In the sequel we allow the value of $\theta$ to change from one line to the other one. Here $\widehat{P_{<1}f}(\xi):= \phi (\xi) \hat{f}(\xi)$ with $\phi$ a bump function equal to one for $|\xi| \leq 1$ and supported on $|\xi| \leq 2$ and $\widehat{P_{\geq 1} f}(\xi) := \hat{f}(\xi) - \widehat{P_{<1}f}(\xi) $.} such that

\begin{equation}
\begin{array}{ll}
k' > 0, \; \delta \in \{1,...,l+2 \}: \; \| u  \|_{L_{t}^{\frac{2 \delta(n+2)}{n}} \tilde{H}^{k',\frac{2 \delta(n+2)}{\delta(n+2)-2}} (J)}  \lesssim \| u \|^{\theta}_{L_{t}^{\frac{2(n+2)}{n}} \tilde{H}^{k', \frac{2(n+2)}{n}} (J)}
\| u \|^{1- \theta}_{L_{t}^{\infty} \tilde{H}^{k'} (J)} \\
0 \leq \bar{\alpha} \leq 1: \; \| D^{\bar{\alpha}} P_{< 1} u \|_{L_{t}^{\infty} L_{x}^{\infty}(J)} \lesssim  \| D u \|_{L_{t}^{\infty} L_{x}^{2}(J)}  \cdot
\end{array}
\nonumber
\end{equation}
$\left\| D^{k} ( g(|u|) \right\|_{L_{t}^{\frac{2(n+2)}{n}}  L_{x}^{\frac{2(n+2)}{n}} (J)}$
is bounded by a finite sum of terms of the form  \footnote{In the sequel if some terms do not make sense
we do not take them into account. Example: if $p=0$ then one should not take into account
the term where $D^{p-1}$ appears. Also if $\theta'_{j} =0$ for some $j \in \{0,..,m-l\}$ then
we ignore all the terms where $j$ appears.}

\begin{equation}
\begin{array}{l}
\bar{Y}_{p}:=  \| u \|^{\theta'_0}_{L_{t}^{q_0} L_{x}^{r_0}(J)}... \| D^{p-1} u \|_{L_{t}^{q_{p-1}}
L_{x}^{r_{p-1}}(J)}^{\theta'_{p-1}}  \\
\| D^{p} u \|_{L_{t}^{q_{p}} L_{x}^{r_{p}}(J)}^{\theta'_{p}-1}
\| D^{ \alpha + p} u \|_{L_{t}^{\bar{q}_{p}} L_{x}^{\bar{r}_{p}}(J)}  \\
\| D^{p+1} u \|^{\theta'_{p+1}}_{L_{t}^{q_{p+1}} L_{x}^{r_{p+1}}(J)} ...
\| D^{m-l} u \|^{\theta'_{m-l}}_{L_{t}^{q_{m-l}} L_{x}^{r_{m-l}}(J)}
\| \partial^{\bar{\theta}} \tilde{g}(|u|^{2}) \|_{L_{t}^{q'} L_{x}^{r'}(J)},
\end{array}
\nonumber
\end{equation}
(with $p \in \{0,...,m-l\}$, $\sum \limits_{j=0: j \neq p}^{m-l} \frac{\theta'_j}{q_j} + \frac{\theta'_p -1}{q_p} +
\frac{1}{\bar{q}_p} + \frac{1}{q'} = \frac{n}{2(n+2)}$,
 and $\sum \limits_{j=0: j \neq p}^{m-l} \frac{\theta'_j}{r_j}
 + \frac{\theta'_p -1}{r_p} + \frac{1}{\bar{r}_p} + \frac{1}{r'} = \frac{n}{2(n+2)}$ ), and of the form
\begin{equation}
\begin{array}{ll}
\tilde{Y} & := \| u \|^{\theta'_0}_{L_{t}^{q_{0}} L_{x}^{r_0}(J)} ...
\| D^{m-l} u \|^{\theta'_{m-l}}_{ L_{t}^{q_{m-l}} L_{x}^{r_{m-l}}(J)}
\left\| D^{\alpha} \partial^{\bar{\theta}} \tilde{g}(|u|^{2}) \right\|_{L_{t}^{q'} L_{x}^{r'}(J)}
\end{array}
\nonumber
\end{equation}
(with $\sum \limits_{j=0}^{m-l} \frac{\theta'_j}{q_j} + \frac{1}{q'} =\frac{n}{2(n+2)}$ and $\sum \limits_{j=0}^{m-l} \frac{\theta'_j}{r_j} + \frac{1}{r'}
=\frac{n}{2(n+2)}$ ). Here $l \in \{0,...,m-1 \}$ and $\theta'_{m-l} \neq 0$. \\
\\
We first estimate $\bar{Y}_{p}$. \\
Assume that $p \neq 0$. \\
Then with $q_0=r_0 = \infty $ and $ q_j =r_j= \bar{q}_p = \bar{r}_p  = \frac{2(n+2)}{n} \sum \limits_{s=1}^{m-l} \theta'_{s}$ for $j \neq 0$ we see from elementary estimates of the derivatives of $g$, the above claim, the above estimates, and elementary consequences of
$ \sum\limits_{l=1}^{m} l \theta'_{l} = m $  that $\bar{Y}_{p}$ is bounded by $RHS'$. \\
Assume that $p=0$. \\
We first consider the case where $(n,l) \neq (3,0)$,  $(n,l) \neq (3,1)$, $(n,l) \neq (4,0)$, and $(n,l) \neq (4,1)$. Letting $q_0 = r_0 = \infty$ and $\bar{q}_0 = \bar{r}_0 =q_j=r_j =\frac{2(n+2)}{n}  \left( \sum \limits_{s=1}^{m-l} \theta'_{s} + 1 \right)$ for
$j \neq 0$ we see that $\bar{Y}_{p} $ is bounded by $RHS'$.\\
We then consider the other cases. By decomposition one has to estimate \\
$Z_{lo}:= \| u \|^{\theta'_0 -1}_{L_{t}^{q_0} L_{x}^{r_0}(J)} \| D^{\alpha} P_{< 1} u \|_{L_{t}^{\bar{q}_0} L_{x}^{\bar{r}_0}(J)} ...
\| D^{m-l} u \|^{\theta'_{m-l}}_{L_{t}^{q_{m-l}} L_{x}^{r_{m-l}}(J)} \| \partial^{\bar{\theta}} \tilde{g}(|u|^{2}) \|_{L_{t}^{q'} L_{x}^{r'}(J)} $ and
$Z_{hi}:=\| u \|^{\theta'_0 -1}_{L_{t}^{q_0} L_{x}^{r_0}(J)} \| D^{\alpha} P_{ \geq 1} u \|_{L_{t}^{\bar{q}_0} L_{x}^{\bar{r}_0}(J)} ...
\| D^{m-l} u \|^{\theta'_{m-l}}_{L_{t}^{q_{m-l}} L_{x}^{r_{m-l}}(J)} \| \partial^{\bar{\theta}} \tilde{g}(|u|^{2}) \|_{L_{t}^{q'} L_{x}^{r'}(J)} $.\\
Assume that $(n,l)=(3,0)$. Letting $(q_{0},r_{0}) = (\bar{q}_{0},\bar{r}_{0}) = (\infty, \infty)$ and
$q_{m} = r_{m} = \frac{2(n+2)}{2}$ we see that $Z_{lo}$ is bounded by $RHS^{'}$. Letting
$q_0 = \bar{q}_0 = r_0 = \infty$, $\bar{r}_0 = 6-$ if $m = 1$ (resp. $\bar{r}_0 = \infty$ if $m >1$),
$q_m = \frac{2(n+2)}{n}$, and $r_m = \frac{15}{2} +$ if $m=1$ (resp. $r_m = \frac{2(n+2)}{n}$ if $m >1$), we see that
$Z_{hi}$ is bounded by $RHS'$. Assume that $(n,l)=(4,0)$. $Z_{lo}$ is bounded by $RHS'$, by assigning the same values
to the exponents as for the case $(n,l)=(3,0)$. Let $(q_0,r_0)= (\infty,\infty)$. If $\alpha >0$ (resp. $\alpha =0$) let
$(\bar{q}_0,\bar{r}_0) = (\infty,\infty-) $ (resp. $(\bar{q}_0,\bar{r}_0) = (\infty,\infty)$) and
$(q_m,r_m) = \left( \frac{2(n+2)}{n}, \frac{2(n+2)}{n} + \right)$ $ \left( \textrm{resp.} \left( \frac{2(n+2)}{n}, \frac{2(n+2)}{n} \right) \right)$: this
implies that $Z_{hi}$ is bounded by $RHS'$. Assume now that $(n,l)=(3,1)$ or $(4,1)$. Consider the subcase
$(\theta'_{1},...,\theta'_{m-2},\theta'_{m-1})=(0,...,0,2)$ (resp. $(\theta'_{1},...,\theta'_{m-2},\theta'_{m-1})=(1,...,0,1)$ ).
Letting $r_{m-1} = \frac{4(n+2)}{n-2}$ (resp. $r_{1} = r_{m-1} = \frac{4(n+2)}{n-2}$),
$(\bar{q}_0,\bar{r}_0) = (\infty,\infty)$, we see that $Z_{lo}$ is bounded by $RHS'$. Consider the subcase
$(\theta'_{1},...,\theta'_{m-2},\theta'_{m-1})=(0,...,0,2)$. If $(n,l) = (4,1)$ and $\alpha > 0$ (resp. $\alpha=0$)
let $(q_{m-1},r_{m-1}) = \left( \frac{4(n+2)}{n}, \frac{4(n+2)}{n} + \right)$ and
$(\bar{q}_0, \bar{r}_0) = (\infty,\infty-)$ $\left( \textrm{resp.} (q_{m-1},r_{m-1}) = \left( \frac{4(n+2)}{n}, \frac{4(n+2)}{n} \right) \,
\textrm{and} \, (\bar{q}_0, \bar{r}_0) = (\infty,\infty) \right)$. If $(n,l) = (3,1)$  let
$(q_{m-1}, r_{m-1}) = \frac{4(n+2)}{n} (1,1)$ and $(\bar{q}_0,\bar{r}_0) = (\infty,\infty)$. This implies that $Z_{hi}$ is bounded by
$RHS'$. Now consider the subcase $(\theta'_{1},...,\theta'_{m-2},\theta'_{m-1})=(1,...,0,1)$: this subcase is treated similarly, except that
we assign the same value of $r_{m-1}$ (resp. $q_{m-1}$) of the previous subcase to that of the variable $r_1$ (resp. $q_{1}$).\\
\\
We then estimate $\tilde{Y}$. \\
Writing $ f = P_{< 1} f + P_{\geq 1} f $, we see that given $p \geq 1$,
$\| D^{\alpha} f \|_{L^{p}(\mathbb{R}^{n})} \lesssim  \| f \|_{L^{p}(\mathbb{R}^{n})} +
\| f \|_{\dot{B}^{\alpha+}_{p,p}(\mathbb{R}^{n})}$ and
$\| f \|_{\dot{B}^{\alpha+}_{p,p}(\mathbb{R}^{n})} \lesssim \| f \|_{L^{p} (\mathbb{R}^{n})} +
\| D^{(\alpha+)+} f \|_{L^{p}(\mathbb{R}^{n})} $. Here $\dot{B}^{\alpha+}_{p,p}(\mathbb{R}^{n})$ is the standard homogeneous Besov space. Elementary estimates show that
$ \| \partial^{\bar{\theta}} \tilde{g}(|u|^{2})(x+h) - \partial^{\bar{\theta}} \tilde{g}(|u|^{2})(x) \|_{L^{p} (\mathbb{R}^{n})} \lesssim
\| u(x+h) - u(x) \|_{L^{p}(\mathbb{R}^{n})} \langle \bar{Q}_{\frac{n}{2}+} (J,u) \rangle^{C} $ for some constant $C >0$. Hence from the characterization of the Besov norm by the modulus of continuity, we see that
$\| D^{\alpha} \partial^{\bar{\theta}} \tilde{g}(|u|^{2}) \|_{L^{p}(\mathbb{R}^{n})} \lesssim
\| \partial^{\theta} \tilde{g}(|u|^{2}) \|_{L^{p}(\mathbb{R}^{n})}
+ \| D^{(\alpha+)+} u \|_{L^{p}(\mathbb{R}^{n})}$. \\
Hence one has to estimate \\
$\tilde{Y}_{1} := \| u \|^{\theta'_0}_{L_{t}^{q_0} L_{x}^{r_0} (J)} ... \| D^{m-l} u \|^{\theta'_{m-l}}_{L_{t}^{q_{m-l}} L_{x}^{r_{m-l}}(J)}
\| \partial^{\theta} \tilde{g}(|u|^{2} \|_{L_{t}^{q'} L_{x}^{r'}(J)}$, and \\
$ \tilde{Y}_{2} := \| u \|^{\theta'_0}_{L_{t}^{q_0} L_{x}^{r_0} (J)} ... \| D^{m-l} u \|^{\theta'_{m-l}}_{L_{t}^{q_{m-l}} L_{x}^{r_{m-l}}(J)}
\| D^{(\alpha+)+} u \|_{L_{t}^{q'} L_{x}^{r'}(J)}$.
We write $\tilde{Y}_2= \tilde{Y}_{2,lo} + \tilde{Y}_{2,hi}$ with \\
$\tilde{Y}_{2,lo}: = \| u \|^{\theta'_0}_{L_{t}^{q_0} L_{x}^{r_0} (J)} ... \| D^{m-l} u \|^{\theta'_{m-l}}_{L_{t}^{q_{m-l}} L_{x}^{r_{m-l}}(J)}
\| D^{(\alpha+)+} P_{<1} u \|_{L_{t}^{q'} L_{x}^{r'}(J)}$ and
$\tilde{Y}_{2,hi}: = \| u \|^{\theta'_0}_{L_{t}^{q_0} L_{x}^{r_0} (J)} ... \| D^{m-l} u \|^{\theta'_{m-l}}_{L_{t}^{q_{m-l}} L_{x}^{r_{m-l}}(J)}
\| D^{(\alpha+)+} P_{\geq 1} u \|_{L_{t}^{q'} L_{x}^{r'}(J)}$. \\
We first consider the case when $(n,l) \neq (3,0)$, $(n,l) \neq (3,1)$, $(n,l) \neq (4,0)$, and $(n,l) \neq (4,1) $. We can estimate
$\tilde{Y}_{1}$  by $RHS'$, assigning the same values for $q_0$,$r_0$,...,$q_{m-l}$, $r_{m-l}$ (resp. $q'$,$r'$) as
those for the same exponents (resp. $\bar{q}_{0}$, $\bar{r}_{0}$) when we estimated $\bar{Y}_{0}$.
We can estimate $\tilde{Y}_{2,lo}$ (resp. $\tilde{Y}_{2,hi}$) by $RHS'$, assigning the same values for
$q_0$, $r_0$,...$q_{m-l}$, $r_{m-l}$ (resp. $q^{'},r^{'}$) as those for the same exponents (resp. $\bar{q}_{0},\bar{r}_{0}$)
when we estimated  $Z_{lo}$ (resp. $Z_{hi}$, with $\alpha > 0$). We then consider the other cases. We can estimate
$\tilde{Y}_{2,lo}$ and $\tilde{Y}_{1}$ (resp. $\tilde{Y}_{2,hi}$ ) by $RHS'$, assigning
the same values for $q_0$, $r_0$,...,, $q_{m-l}$, $r_{m-l}$ (resp. $q'$, $r'$) as those for the same exponents (resp. $\bar{q}_0$, $\bar{r}_0$)
when we estimate $Z_{lo}$ (resp. $Z_{hi}$). \\
\\
A straightforward modification of the proof shows that (\ref{Eqn:EstHighReg}) holds if $u^{\lambda}$ is replaced with
$u^{\lambda_1} \bar{u}^{\lambda_2}$ with $(\lambda_1,\lambda_2) \in \mathbb{N}^{2}$ such that $\lambda_1 + \lambda_2 = \lambda$. \\
If we replace $g(|u|)$ with $\tilde{g}^{'}(|u|^{2}) u^{\lambda_3} \bar{u}^{\lambda_4}$, then (\ref{Eqn:EstHighReg}) also holds
by replacing in the proof $\partial^{\bar{\theta}} \tilde{g}(|u|^{2})$ with $\partial^{\bar{\theta}+1} \tilde{g}(|u|^{2}) u^{\lambda_a}
\bar{u}^{\lambda_b}$, taking into account that $\left\| \partial^{\bar{\theta}+1} \tilde{g}(|u|^{2}) u^{\lambda_{a}} \bar{u}^{\lambda_{b}} \right\|_{L^{r}(\mathbb{R}^{n})} \lesssim \left\| \partial^{\bar{\theta}} \tilde{g}(|u|^{2}) \right\|_{L^{r} (\mathbb{R}^{n})}$ and
$ \left\| D^{\alpha} \left( \partial^{\bar{\theta}+1} \tilde{g}(|u|^{2}) u^{\lambda_a} \bar{u}^{\lambda_b}  \right) \right\|_{L^{r}(\mathbb{R}^{n})}
\lesssim  \| \partial^{\bar{\theta}+1} \tilde{g}(|u|^{2}) u^{\lambda_a} \bar{u}^{\lambda_b} \|_{L^{r}(\mathbb{R}^{n})} + \left\| D^{(\alpha+)+} u \right\|_{L^{r}(\mathbb{R}^{n})}$ ( Here $r \in [1,\infty]$ and $(\lambda_a,\lambda_b) \in \mathbb{N}^{2}$ such that $\lambda_a + \lambda_b=2$). The proof is left to the
reader.

\end{proof}

\section{APPENDIX C}

We shall prove the following proposition:

\begin{prop}
Let $u$ be a solution of (\ref{Eqn:BarelySchrod}) with data
$u_0 \in \tilde{H}^{k}$, $k > \frac{n}{2}$. Assume that $u$ exists globally in time and that
$\| u \|_{L_{t}^{\frac{2(n+2)}{n-2}} L_{x}^{\frac{2(n+2)}{n-2}} (\mathbb{R})} < \infty$. Then
$Q(\mathbb{R},u) < \infty$.
\label{Prop:PersReg}
\end{prop}

\begin{proof}

By symmetry we may WLOG restrict ourselves to $\mathbb{R^{+}}$. \\
First assume that $ \frac{n+2}{n-2} > k > \frac{n}{2}$. Repeating the same steps from `$J:=[0,a]$ just below
(\ref{Eqn:ControlApCrit}) to (\ref{Eqn:EstQPr}) included and replacing `$[0,T']$' (resp. `LHS \; of \; (\ref{Eqn:EstI}) $\gtrsim$')
with `$\mathbb{R}^{+}$' (resp. `$\infty >$') we get $Q(\mathbb{R}^{+},u) < \infty$.\\
Now assume that $k \geq \frac{n+2}{n-2}$. In view of the previous paragraph it is sufficient to show that if
for all $1 \leq j \leq k - \frac{1}{4}$, $Q_{j}(\mathbb{R}^{+},u) < \infty$, then $Q(\mathbb{R}^{+},u) < \infty$. Let
$J:=[0,a]$ be an interval. By (\ref{Eqn:Strich}), (\ref{Eqn:SobolevIneq2}), Proposition \ref{Prop:FracLeibn} and Proposition \ref{Prop:EstHighReg}, we get

\begin{equation}
\begin{array}{ll}
Q(J,u) & \lesssim \| u_{0} \|_{\tilde{H}^{k}} + Q(J,u) \| u \|^{\frac{4}{n-2}}_{L_{t}^{\frac{2(n+2)}{n-2}} L_{x}^{\frac{2(n+2)}{n-2}} (J)}
g \left( Q(J,u) \right) \cdot
\end{array}
\nonumber
\end{equation}
Again repeating the same steps from `$J:= [0,a]$' just below (\ref{Eqn:ControlApCrit}) to
(\ref{Eqn:EstQPr}) included, and taking into account the replacements that were pointed out for the case
$ \frac{n+2}{n-2} > k > \frac{n}{2}$, we get $Q(\mathbb{R}^{+},u) < \infty$.

\end{proof}







\end{document}